\documentclass[12pt]{article}

\usepackage[a4paper,margin=1in]{geometry}
\usepackage{amsmath,amsthm,amssymb,mathtools}
\usepackage{array,booktabs,longtable,multirow}
\usepackage{graphicx}
\usepackage{xcolor}
\usepackage{tikz,cite,pgfplots}
\pgfplotsset{compat=1.18}
\usetikzlibrary{arrows.meta,positioning,calc,shapes.geometric}
\usepackage{hyperref}
\usepackage{algorithm}
\usepackage{algpseudocode}
\usepackage{enumitem}
\usepackage{caption,booktabs,tabularx}
\everymath{\displaystyle}
\renewenvironment{proof}{{\noindent \bf Proof.}}{\qed}

\hypersetup{
	colorlinks=true,
	linkcolor=blue!60!black,
	citecolor=blue!60!black,
	urlcolor=blue!60!black
}

\newtheorem{theorem}{Theorem}[section]
\newtheorem{proposition}[theorem]{Proposition}
\newtheorem{lemma}[theorem]{Lemma}
\newtheorem{corollary}[theorem]{Corollary}

\theoremstyle{definition}
\newtheorem{definition}[theorem]{Definition}
\newtheorem{remark}[theorem]{Remark}
\newtheorem{example}[theorem]{Example}
\newtheorem{openproblem}[theorem]{Open Problem}

\DeclareMathOperator{\reg}{reg}
\DeclareMathOperator{\pd}{pd}
\DeclareMathOperator{\depth}{depth}
\DeclareMathOperator{\Hilb}{Hilb}
\DeclareMathOperator{\Ind}{Ind}

\newcommand{\join}{\nabla}

\title{Homological invariants of edge ideals of the multiple extended complete split-like graphs}

\author{Bilal Ahmad Rather\\[2mm]
	\small School of Mathematics and Statistics, Shandong University of Technology,\\
	\small Zibo 255049, China\\
	\texttt{\href{mailto:bilalahmadrr@gmail.com}{bilalahmadrr@gmail.com}}
			}

\date{}
\begin{document}
	\maketitle	
	\begin{abstract}
		We study the graphs $MECS_{b,n}^a \cong \overline{K}_a \join \big(n(K_b+K_2)\big)$, obtained by attaching an independent set of size $a$ to $n$ disjoint copies of the block $K_b+K_2$. For $n=1$, we get $MECS_{b,1}^a$, and  recover the results of one-block case studied in [Anand, Gupta, Rather and Singh, Homological invariants of some complete split-like graphs, Beitr. Algebra Geom. (2025)]. Using Hochster's formula, tensor products of minimal free resolutions over disjoint variable sets, and the Betti-number formula for graph joins, we derive explicit descriptions of the independence complex, independence polynomial and its analytic properties, Hilbert series, linear and quadratic Betti strands, regularity, projective dimension, and several structural invariants of $MECS_{b,n}^a$. We further classify the well-covered and unmixed members, compute induced matching numbers, show that the family is never Cohen--Macaulay, and record algorithmic procedures for evaluating the Betti data.
	\end{abstract}
	
	\noindent \textbf{2020 Mathematics Subject Classification.} Primary 13F55; Secondary 05E40, 13D02, 05C75, 05C69.
	
	\noindent \textbf{Keywords.} Multiple extended complete split-like graph; join of graphs; edge ideal; graded Betti numbers; Castelnuovo--Mumford regularity; well-covered graph.
	
	\section{Introduction}\label{sec:introduction}
	
	The passage from a finite simple graph to its edge ideal has become one of the most productive interfaces between combinatorics and commutative algebra. If $G$ is a finite simple graph with vertex set $V(G)=\{x_1,\dots,x_r\}$, then the square-free quadratic monomial ideal $ I(G)=\big(x_ix_j:{x_i,x_j}\in E(G)\big) $ packages the adjacency data of $G$ into a form accessible to homological methods. The minimal graded free resolution of $R/I(G)$, where $R=K[x_1,\dots,x_r]$, encodes graded Betti numbers, projective dimension, regularity, and several finer measures of complexity. This circle of ideas goes back to the foundational work of Hochster, Stanley, Villarreal, and many others, and remains central in current studies of monomial ideals, simplicial complexes, and graph parameters \cite{Hochster1975, Stanley1996, MillerSturmfels2005, HerzogHibi2011, Villarreal2015}. The attraction of the subject is that algebraic invariants can often be computed from graph-theoretic information, yet the resulting formulas are rarely obvious from either side alone \cite{FranciscoHaVanTuyl2009,Froberg1990,HaVanTuyl2008,HerzogHibiZheng2004,Jacques2004,Jonsson2008,Katzman2006,HaWoodroofe2014}.
	
	Among graph classes that are particularly suitable for this interplay, split graphs and their extensions occupy a useful middle ground. On the one hand, they are structured enough to admit exact enumeration arguments. On the other hand, they are rich enough to exhibit nontrivial homological phenomena. Complete split graphs, nearly complete split graphs, and related families have already been examined from the viewpoint of graded Betti numbers, regularity, and projective dimension by several authors \cite{AnandSinghVats2025, AnandGuptaRatherSingh2026,GuptaSinghAnand2026}. Recent work also points to broader connections with domination parameters, shellability, and Cohen--Macaulay type behavior \cite{DaoSchweig2013, MoreyVillarreal2012, Woodroofe2009,bilafilo,bilalfilo1,bilaljco}. What emerges from these studies is a recurring pattern: once a graph family is built from a few elementary pieces by join, disjoint union, or Cartesian-type operations, the corresponding algebra tends to inherit highly organized syzygy patterns.
	
	 Suppose $G_1=(V(G_1),E(G_1))$ and $G_2=(V(G_2),E(G_2))$ are simple graphs with disjoint vertex sets, that is,
	$ V(G_1)\cap V(G_2)=\emptyset. $ We use the following two standard constructions. The \textit{join} of $G_1$ and $G_2$, written as $G_1 * G_2$, is the graph whose vertex set is the disjoint union
		$ V(G_1)\sqcup V(G_2), $ and whose edge set is obtained by taking all edges already present in $G_1$ and $G_2$, together with every edge joining a vertex of $G_1$ to a vertex of $G_2$,  that is, 
		$$E(G_1 * G_2)
		=
		E(G_1)\cup E(G_2)\cup
		\bigl\{\{x,y\}\mid x\in V(G_1),\ y\in V(G_2)\bigr\}.
		$$
		 The \textit{sum} of $G_1$ and $G_2$, denoted by $G_1+G_2$, is the graph with vertex set
		$ V(G_1)\times V(G_2). $ Two vertices $(u_1,u_2)$ and $(v_1,v_2)$ are adjacent precisely when one coordinate is fixed and the other follows an edge from the corresponding graph; that is, adjacency occurs exactly when
		$ u_1=v_1 \ \text{and}\ (u_2,v_2)\in E(G_2), $
		or $
		u_2=v_2 \ \text{and}\ (u_1,v_1)\in E(G_1).
		$ 
	The present paper is devoted to a family that naturally combines two directions already visible in the literature. Let
	$ B_b:=K_b+K_2, $ where $+$ denotes the graph-theoretic as above, see \cite{AnandGuptaRatherSingh2026}. Concretely, $B_b$ may be viewed as two copies of $K_b$ joined by a perfect matching. If $nB_b$ denotes the disjoint union of $n$ copies of $B_b$, then the \emph{multiple extended complete split-like graph} is
	$ MECS_{b,n}^a\cong \overline{K}_a \join (nB_b). $
	Thus one starts with an independent set of size $a$, takes $n$ disconnected copies of the block $B_b$, and joins every vertex in the independent set to every vertex in every block. This construction simultaneously generalizes two known families. When $n=1$, one recovers the extended complete split-like graph studied in \cite{AnandGuptaRatherSingh2026}. When $B_b$ is replaced by $K_b$, one obtains the multiple complete split-like graph treated in the same source and, in special cases, complete split graphs and windmill graphs \cite{AnandGuptaRatherSingh2026}. The repeated $B_b$-block family therefore sits exactly at the intersection of two well-motivated directions, but its explicit homological behavior has not been developed.
	
	There is also a technical reason to expect new behavior. A single block $B_b$ already has a two-strand Betti table in the quotient resolution \cite{AnandGuptaRatherSingh2026}. Repeating the block by disjoint union does not merely enlarge the graph. It propagates the two-strand structure across tensor products, so the total regularity grows by a different law from the clique-block family $\overline{K}_a\join nK_b$. In other words, replacing $K_b$ by $K_b+K_2$ is not a cosmetic modification. It changes the independence complex from a simplex to a crown-type complex, and that change forces a different pattern in the Hilbert series, in the strand support of the minimal resolution, and in covering properties such as well-coveredness and unmixedness. This is precisely the type of phenomenon that makes split-like families useful test objects in combinatorial commutative algebra. From a more applied viewpoint, the graph $MECS_{b,n}^a$ can be read as a modular architecture. The independent vertices behave like an external supervisory layer, while each block $B_b$ is a replicated internal module made of two dense local clusters linked by a matching. Repeating the block $n$ times preserves the local geometry inside each module but changes the global syzygy profile through tensor-product effects. Such replicated constructions are algebraically interesting because they isolate how invariants behave under controlled scaling. In particular, they make it possible to separate the effect of the join operation from the effect of block replication.
	
	The main purpose of this paper is to give a systematic and self-contained treatment of this family. Our starting point is the explicit Betti data of a single block $B_b$ obtained in \cite{AnandGuptaRatherSingh2026}. From there we derive a Betti polynomial for $nB_b$ by means of tensor products of minimal free resolutions over disjoint variable sets. We then feed this information into the join formula of Mousivand \cite{Mousivand2012} to obtain explicit Betti formulas for $MECS_{b,n}^a$. In addition, we determine structural invariants such as the order, size, degree sequence, clique number, chromatic number, independence number, and vertex cover number. We also compute the independence polynomial and Hilbert series and prove exact formulas for projective dimension, regularity, induced matching number, and the well-covered criterion. In particular, for $b\geq 3$ we show that the regularity of $MECS_{b,n}^a$ equals $2n$, while the corresponding clique-block family has regularity $n$ \cite{AnandGuptaRatherSingh2026}. We also show that the well-covered threshold shifts from $a=n$ in the clique-block setting to $a=2n$ in the present family, exactly reflecting the fact that each block $B_b$ contributes an independent set of size $2$ rather than $1$.
	
	For the reader's convenience, we isolate the principal contributions as: We describe the independence complex of $MECS_{b,n}^a$ as the disjoint union of a simplex and an $n$-fold simplicial join of crown complexes. We derive explicit formulas for the independence polynomial along with its analytic properties and Hilbert series and obtain closed expressions for the numbers of independent sets of each size.  We compute the Betti polynomial of $nB_b$ and then obtain formulas for the linear and quadratic Betti strands of $MECS_{b,n}^a$.  We determine $\reg(MECS_{b,n}^a)$, $\pd(MECS_{b,n}^a)$, the induced matching number, the well-covered criterion, and several non-Cohen--Macaulay consequences. We provide an algorithmic scheme for generating the Betti data and illustrate the theory by worked examples and comparison tables.

	Organization of paper: In Section~\ref{sec:preliminaries}, we records the notation, recalls the needed facts on edge ideals, graph joins, and simplicial complexes, and fixes the single-block Betti data used later. Section~\ref{sec:independence} develops the basic structure theory of $MECS_{b,n}^a$, describes the independence complex, counts independent sets, derives the independence polynomial and Hilbert series, and obtains the top Betti number, projective dimension, and depth. Section~\ref{sec:linear} introduces a Betti polynomial for repeated blocks, proves a product formula for disjoint unions of the block $B_b$, and deduces the linear strand of $MECS_{b,n}^a$. Section~\ref{sec:higher} studies higher strands, with particular emphasis on the quadratic strand, the support window of the Betti table, and the regularity of the family. Section~\ref{sec:matching} turns to covering and matching phenomena, induced matching number, purity, well-coveredness, unmixedness, failure of Cohen--Macaulayness, and related consequences are determined there. Section~\ref{sec:algorithm} gives an algorithmic implementation of the formulas, presents a computational flow diagram, and tabulates representative examples together with comparisons against previously studied split-like families. Section \ref{ind section} gives the analytic properties of $MECS_{b,n}^a$.  Finally, Section~\ref{sec:conclusion} summarizes the contributions, discusses the limitations of the present treatment, and outlines several directions for further work.
	
	\section{Preliminaries and notation}\label{sec:preliminaries}
	
	Throughout the paper, all graphs are finite, simple, and undirected. We work over a field $K$. For a graph $G$ on the vertex set $V(G)={x_1,\dots,x_r}$, we write
	$$
	R_G:=K[x_1,\dots,x_r], \qquad I(G):=\big(x_ix_j:{x_i,x_j}\in E(G)\big)\subseteq R_G.
	$$
	The quotient $R_G/I(G)$ is the edge ring of $G$. Its graded Betti numbers are denoted by $\beta_{i,j}(G)$, and we write
	$$
	\pd(G):=\pd(R_G/I(G)), \qquad \reg(G):=\reg(R_G/I(G)).
	$$
	We adopt the usual convention that $\binom{u}{v}=0$, whenever $v<0$ or $v>u$.
	
	The next definition fixes the graph operations used throughout the paper.
	\begin{definition}
		 Let $G$ and $H$ be graphs on disjoint vertex sets.
		\begin{enumerate}[label=\textup{(\roman*)},leftmargin=8mm]
			\item The \emph{disjoint union} $G\sqcup H$ is the graph with vertex set $V(G)\sqcup V(H)$ and edge set $E(G)\sqcup E(H)$.
			\item The \emph{join} of $G$ and $H$, denoted by $G\join H$, is obtained from $G\sqcup H$ by adding every edge between $V(G)$ and $V(H)$.
			\item For a graph $H$, the notation $nH$ means the disjoint union of $n$ copies of $H$.
		\end{enumerate}
	\end{definition}
	
	The next definition identifies the basic block used later.
	\begin{definition}
		For $b\geq 2$, let $B_b:=K_b+K_2$ be the graph with vertex set
		$ V(B_b)=\{x_1,\dots,x_b,y_1,\dots,y_b\} $
		and edge set
		$$
		E(B_b)=\big\{\{x_i,x_j\}:1\leq i<j\leq b\big\}\cup
		\big\{\{y_i,y_j\}:1\leq i<j\leq b\big\}\cup
		\big\{\{x_i,y_i\}:1\leq i\leq b\big\}.
		$$
		Thus $B_b$ consists of two copies of $K_b$ joined by a perfect matching, see Figure \ref{fig0}.
	\end{definition}
	\begin{figure}[h]
		\centering
		\begin{minipage}{0.48\textwidth}
			\centering
			\begin{tikzpicture}[
				scale=1.2, 
				v/.style={circle, draw, fill=red, inner sep=1.8pt},
				matching/.style={thick, dashed, blue!70!black}
				]
				\node[v, label=right:$x_1$] (x1) at (90:0.8) {};
				\node[v, label=below:$x_2$] (x2) at (210:0.8) {};
				\node[v, label=below:$x_3$] (x3) at (330:0.8) {};
				
				\node[v, label=above:$y_1$] (y1) at (90:2.0) {};
				\node[v, label=left:$y_2$] (y2) at (210:2.0) {};
				\node[v, label=right:$y_3$] (y3) at (330:2.0) {};
				
				\draw[thick] (x1) -- (x2) -- (x3) -- (x1);
				\draw[thick] (y1) -- (y2) -- (y3) -- (y1);
				\draw[matching] (x1) -- (y1);
				\draw[matching] (x2) -- (y2);
				\draw[matching] (x3) -- (y3);
				
				\node[anchor=north, font=\small] at (0,-2.2) {$B_3$};
			\end{tikzpicture}
		\end{minipage}
		\hfill 
		\begin{minipage}{0.48\textwidth}
			\centering
			\begin{tikzpicture}[
				scale=1.2, 
				vertex/.style={circle, draw, fill=magenta, inner sep=0pt, minimum size=6pt},
				edge/.style={thick, black},
				matching/.style={thick, blue, dash pattern=on 5pt off 3pt}
				]
				\def\b{4}
				\def\innerRadius{0.9}
				\def\outerRadius{2.0}
				
				\foreach \i in {1,...,\b} {
					\coordinate (x\i) at ({90 + (\i-1)*-90}:\innerRadius);
					\coordinate (y\i) at ({90 + (\i-1)*-90}:\outerRadius);
				}
				
				\foreach \i in {1,...,\b} { \draw[matching] (x\i) -- (y\i); }
				
				\foreach \i in {1,...,\b} {
					\foreach \j in {\i,...,\b} {
						\ifnum \i<\j \draw[edge] (x\i) -- (x\j); \fi
					}
				}
				
				\draw[edge] (y1) -- (y2) -- (y3) -- (y4) -- (y1);
				
				\node[vertex, label={left:$x_1$}] at (x1) {};
				\node[vertex, label={above:$x_2$}] at (x2) {};
				\node[vertex, label={left:$x_3$}] at (x3) {};
				\node[vertex, label={above:$x_4$}] at (x4) {};
				
				\node[vertex, label={above:$y_1$}] at (y1) {};
				\node[vertex, label={right:$y_2$}] at (y2) {};
				\node[vertex, label={below:$y_3$}] at (y3) {};
				\node[vertex, label={left:$y_4$}] at (y4) {};
				
				\node[anchor=north, font=\small] at (0,-2.5) {$B_4$};
			\end{tikzpicture}
		\end{minipage}
		\caption{Graphs $B_{3}$ and $B_{4}$.}
		\label{fig0}
	\end{figure}
	The next definition introduces the graph family studied in this article.
	\begin{definition}
		For integers $a\geq 1$, $b\geq 2$, and $n\geq 1$, we define
		$ MECS_{b,n}^a:=\overline{K}_a \join (nB_b). $
		Whenever convenient, we abbreviate this graph by $G_{a,b,n}$, see Figure \ref{fig:mecsgraph}.
	\end{definition}
	
	The next definition recalls the simplicial objects attached to a graph.
	\begin{definition}	
		The independence complex of a graph $G$ is
		$$
		\Delta(G):=\{F\subseteq V(G): F \text{ is an independent set of }G\}.
		$$
		If $\Delta_1$ and $\Delta_2$ are simplicial complexes on disjoint vertex sets, their simplicial join is
		$$
		\Delta_1\star \Delta_2:=\{F_1\cup F_2:F_1\in \Delta_1,\ F_2\in \Delta_2\}.
		$$
		For a graph $G$, we also write independence polynomial as
		$$
		\Ind(G,z):=\sum_{k\geq 0} s_k(G)z^k,
		$$
		where $s_k(G)$ denotes the number of independent sets of size $k$ in $G$.
	\end{definition}
	
	The following theorem is the standard combinatorial formula for graded Betti numbers of Stanley--Reisner rings, which turns reduced homology of induced subcomplexes into Betti numbers.	
	\begin{theorem}[Hochster's formula, see \cite{Hochster1975}]		
		Let $\Delta$ be a simplicial complex on the vertex set $[r]={1,\dots,r}$. Then
		$$
		\beta_{i,d}(K[\Delta])=
		\sum_{\substack{W\subseteq [r]\\ |W|=d}}
		\dim_K \widetilde{H}_{d-i-1}(\Delta[W];K).
		$$
	\end{theorem}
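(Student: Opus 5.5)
The plan is the standard multigraded Koszul computation. Since $K[\Delta]=R/I_\Delta$ with $R=K[x_1,\dots,x_r]$ is $\mathbb{Z}^r$-graded, the graded Betti numbers refine to multigraded ones:
$$\beta_{i,d}(K[\Delta])=\sum_{\alpha\in\mathbb{N}^r,\ |\alpha|=d}\dim_K \operatorname{Tor}_i^R(K[\Delta],K)_\alpha .$$
Tor is balanced, so I will compute it by resolving $K$ with the Koszul complex $\mathcal{K}_\bullet(x_1,\dots,x_r)$ and tensoring with $K[\Delta]$. The multigraded pieces of the complex $\mathcal{K}_\bullet\otimes_R K[\Delta]$ then compute $\operatorname{Tor}_i(K[\Delta],K)_\alpha$.

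The first step is to show that only squarefree multidegrees $\alpha$ contribute. I will argue this from the other side of the balance. The Taylor resolution of the squarefree monomial ideal $I_\Delta$ is a (possibly non-minimal) multigraded free resolution. Every shift in it is an lcm of squarefree monomials, hence squarefree. A minimal resolution is a direct summand of any free resolution, so its shifts are squarefree as well. Consequently $\operatorname{Tor}_i(K[\Delta],K)_\alpha=0$ unless $\alpha=\mathbf{1}_W$ for some $W\subseteq[r]$, in which case $|\alpha|=|W|=d$.

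The second step fixes $\alpha=\mathbf{1}_W$ and writes down the degree-$\alpha$ strand of $\mathcal{K}_\bullet\otimes K[\Delta]$.
\begin{itemize}
\item In homological degree $i$, a $K$-basis is given by the elements $e_F\otimes x^{W\setminus F}$, where $F\subseteq W$ and $|F|=i$.
\item Such an element is nonzero exactly when the monomial $x^{W\setminus F}$ survives in $K[\Delta]$, that is, when $W\setminus F\in\Delta[W]$.
\item Set $G=W\setminus F$. The strand in degree $i$ is then identified with the span of faces $G\in\Delta[W]$ with $|G|=|W|-i$, i.e.\ faces of dimension $|W|-i-1$. The empty face corresponds to $F=W$, which is what makes the resulting homology reduced.
\end{itemize}

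The third step identifies the differentials. The Koszul differential sends $e_F\otimes x^{G}$ to $\sum_{j\in F}\pm\, e_{F\setminus j}\otimes x^{G\cup j}$. Under $F\leftrightarrow W\setminus F$, this is the simplicial coboundary $G\mapsto\sum_{j\notin G}\pm (G\cup j)$ on the augmented cochain complex of $\Delta[W]$. I expect this to be the main obstacle: the signs must match, since $\pm$ depends on the position of $j$ in $F$ while the coboundary sign depends on its position in $G\cup j$. I would handle it by rescaling each basis vector by a sign $\varepsilon(F,W)$ (the sign of the shuffle permutation sorting $F$ followed by $W\setminus F$), which makes the two complexes isomorphic.

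Granting this identification,
$$\operatorname{Tor}_i(K[\Delta],K)_{\mathbf{1}_W}\cong \widetilde H^{\,|W|-i-1}(\Delta[W];K)\cong \widetilde H_{|W|-i-1}(\Delta[W];K),$$
where the last isomorphism holds because $K$ is a field, so reduced homology and cohomology have equal dimension. Summing over all $W$ with $|W|=d$ gives the stated formula.
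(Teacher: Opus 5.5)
Your proposal is correct. It is the standard Koszul-complex proof of Hochster's formula, essentially the argument in Bruns--Herzog, Theorem 5.5.1, and in Miller--Sturmfels, Chapter 1. The paper gives no proof of its own. It quotes the formula from Hochster's work as a black box and only applies it, for example to the full vertex set when computing $\beta_{N-1,N}$. So there is no alternative route in the paper to compare with.

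One precision on the step you singled out as the main obstacle. Write $G=W\setminus F$ and $\varepsilon(F)=(-1)^{\#\{(f,g)\in F\times G:\ f>g\}}$. For each $j\in F$, the Koszul sign is $(-1)^{\#\{f\in F:\ f<j\}}$ and the coboundary sign is $(-1)^{\#\{g\in G:\ g<j\}}$. A direct count of the inversions gained and lost when $j$ moves from $F$ to $G$ gives
\[
\varepsilon(F)\,(-1)^{\#\{f\in F:\ f<j\}}=(-1)^{|F|-1}\,\varepsilon(F\setminus j)\,(-1)^{\#\{g\in G:\ g<j\}}.
\]
So the shuffle sign by itself makes the squares commute only up to the factor $(-1)^{i-1}$, not exactly. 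You can check this already for $W=F=\{1,2\}$: there $\phi\partial=-\delta\phi$.

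This factor depends only on the homological degree, so it changes neither kernels nor images. Alternatively, multiply the degree-$i$ identification by $(-1)^{\binom{i}{2}}$ to get a genuine isomorphism of complexes. Either way, your conclusion $\operatorname{Tor}_i(K[\Delta],K)_{\mathbf{1}_W}\cong\widetilde H^{|W|-i-1}(\Delta[W];K)$ stands as stated.
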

	In practice, for a graph $G$, we may compute $\beta_{i,d}(G)$ from the independence complex $\Delta(G)$.

	The next theorem is the join formula that we shall use after the repeated-block convolution has been computed.
	
	\begin{theorem}[\cite{Mousivand2012}]\label{Mousivand2012 theorem 2.2}
		Let $G$ and $H$ be simple graphs with disjoint vertex sets of cardinalities $m$ and $n$, respectively. Then
		$$
		\beta_{i,d}(G\join H)=
		\sum_{j=0}^{d-2}
		\left(
		\binom{n}{j}\beta_{i-j,d-j}(G)+
		\binom{m}{j}\beta_{i-j,d-j}(H)
		\right)
		+\delta_{d,i+1}\sum_{j=1}^{d-1}\binom{m}{j}\binom{n}{d-j},
		$$
		where $\delta_{p,q}$ denotes the Kronecker delta and all out-of-range Betti numbers are understood to be zero.
	\end{theorem}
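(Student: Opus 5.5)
The statement immediately above is Mousivand's join formula. I will prove it directly from Hochster's formula applied to the independence complex of the join. The key structural fact comes first. A set $F\subseteq V(G)\sqcup V(H)$ is independent in $G\join H$ exactly when $F$ is independent in $G$ and disjoint from $V(H)$, or independent in $H$ and disjoint from $V(G)$. This is because every vertex of $G$ is adjacent to every vertex of $H$. Hence
$$\Delta(G\join H)=\Delta(G)\sqcup \Delta(H),$$
the disjoint union of the two complexes on disjoint vertex sets. For $W=W_1\sqcup W_2$ with $W_1\subseteq V(G)$ and $W_2\subseteq V(H)$, the restriction is $\Delta(G)[W_1]\sqcup\Delta(H)[W_2]$.

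Next I compute reduced homology of a disjoint union of two complexes $A$ and $B$. In degrees $\geq 1$ we have $\widetilde H_k(A\sqcup B)=\widetilde H_k(A)\oplus\widetilde H_k(B)$. In degree $0$, $\dim\widetilde H_0(A\sqcup B)=\dim\widetilde H_0(A)+\dim\widetilde H_0(B)+1$ when both are nonempty. If one of them, say $B$, is the void/empty complex (i.e. $W_2=\emptyset$), then $A\sqcup B=A$ and nothing extra appears. I split the Hochster sum over $|W|=d$ into three parts: $W_2=\emptyset$, $W_1=\emptyset$, and both nonempty.

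The case $W_2=\emptyset$ gives $\beta_{i,d}(G)$, and symmetrically $W_1=\emptyset$ gives $\beta_{i,d}(H)$. For $W_1,W_2$ both nonempty, only reduced homology of each piece in degree $d-i-1$ contributes, plus the extra $1$ when $d-i-1=0$. Fix $|W_2|=j\geq1$ with $W_2$ nonempty and consider $\widetilde H_{d-i-1}(\Delta(G)[W_1])$. Since $|W_1|=d-j$, its degree is $(d-j)-(i-j)-1$. Summing over the $\binom{n}{j}$ choices of $W_2$ and all $W_1$ therefore yields $\binom{n}{j}\beta_{i-j,d-j}(G)$; the symmetric count gives $\binom{m}{j}\beta_{i-j,d-j}(H)$.

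Some care is needed with the reduced homology of the empty complex $\{\emptyset\}$, which is nonzero in degree $-1$. That case corresponds to $\beta_{0,0}$ and only occurs when $j=d$; this is exactly why the sum stops at $j\le d-2$. The case $j=d-1$ is also excluded, since a single vertex has vanishing reduced homology, so $\beta_{i-j,1}=0$. Absorbing $j=0$ (the one-sided cases) into the same sum gives the first part of the formula.

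Finally, the extra $1$ arises precisely when $d-i-1=0$, that is $d=i+1$. It counts all pairs with $|W_1|=j\in[1,d-1]$ and $|W_2|=d-j$, giving $\delta_{d,i+1}\sum_{j=1}^{d-1}\binom{m}{j}\binom{n}{d-j}$. The main obstacle is this bookkeeping of boundary cases: the empty-complex homology in degree $-1$, and the convention that out-of-range Betti numbers vanish. I will verify these ranges against small cases, for instance $G=H=K_1$ with $i=1,d=2$, which should give $\beta_{1,2}=1$.
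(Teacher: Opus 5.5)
The paper gives no proof of this statement; it quotes the formula from Mousivand and uses it as a black box. Your Hochster-formula derivation is a correct, self-contained replacement, and it is more informative than the citation because it shows exactly where the formula holds. The following ingredients are all right:
\begin{itemize}
\item the identity $\Delta(G\join H)=\Delta(G)\sqcup\Delta(H)$;
\item the restriction to $W=W_1\sqcup W_2$;
\item additivity of reduced homology for a disjoint union of two \emph{nonempty} complexes, with the extra copy of $K$ in $\widetilde H_0$;
\item the identification of $\sum_{W_1,W_2}\dim\widetilde H_{d-i-1}(\Delta(G)[W_1])$ with $\binom{n}{j}\beta_{i-j,d-j}(G)$;
\item the count $\sum_{j=1}^{d-1}\binom{m}{j}\binom{n}{d-j}$ of two-sided sets $W$ when $d=i+1$.
\end{itemize}

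Two boundary points should be stated more carefully.

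First, the one-sided cases $W_2=\emptyset$ and $W_1=\emptyset$ are disjoint only when $d\geq 1$. At $d=0$ they coincide, so your split would give $2$. In fact the displayed formula is false at $(i,d)=(0,0)$: the right-hand side is $0$, while $\beta_{0,0}(G\join H)=1$. You should therefore prove it for $d\geq 1$, or equivalently for $(i,d)\neq(0,0)$. For $d=1$ both sides vanish: the $j=0$ terms lie outside the range $0\le j\le d-2$, but they equal $\beta_{i,1}=0$. This restriction also matters downstream, since Theorem~\ref{thm:joinformula-new} applies the formula ``for all $i,d\geq 0$'' and so gets $\beta_{0,0}$ wrong.

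Second, your explanation of why $j=d$ is excluded is muddled. The value $j=d$ never arises in the two-sided sum, because $W_1=\emptyset$ is the one-sided case already counted as the $j=0$ term for $H$. The role of $\{\emptyset\}$, with $\widetilde H_{-1}=K$, is only that the disjoint-union formula must not be applied when one side is empty. With the two-sided range $1\le j\le d-1$, the one-sided terms at $j=0$, and the vanishing of the $j=d-1$ terms (a single vertex is acyclic), the upper limit $d-2$ follows.
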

	
	The next theorem gives the regularity of a graph join.
	\begin{theorem}[\cite{Mousivand2012}]\label{join regularity formula}
		For graphs $G$ and $H$ on disjoint vertex sets,
		$$
		\reg(G\join H)=\max\{\reg(G),\reg(H)\}.
		$$
	\end{theorem}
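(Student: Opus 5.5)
The plan is to read the regularity off the Betti formula of Theorem~\ref{Mousivand2012 theorem 2.2}, using $\reg(G)=\max\{d-i:\beta_{i,d}(G)\neq 0\}$. One point has to be settled first. If both $G$ and $H$ are edgeless but nonempty, then $G\join H$ is a complete bipartite graph, so its regularity is $1$, while $\max\{\reg(G),\reg(H)\}=0$. I will therefore prove the formula in the form
$$
\reg(G\join H)=\max\{\reg(G),\reg(H),1\},
$$
which agrees with the statement as soon as one of $G,H$ has an edge. That is the only situation used later, since each block $B_b$ has edges.

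The key structural observation is that every summand in Theorem~\ref{Mousivand2012 theorem 2.2} is a nonnegative integer, so $\beta_{i,d}(G\join H)\neq 0$ exactly when at least one summand is nonzero. There is no cancellation to control.

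First, the summand $\binom{n}{j}\beta_{i-j,d-j}(G)$ lives in the strand $(d-j)-(i-j)=d-i$. So every nonzero contribution coming from $G$ sits in a strand index at most $\reg(G)$. Conversely, the $j=0$ term gives $\beta_{i,d}(G\join H)\geq \beta_{i,d}(G)$ for all $d\geq 2$. All nonzero $\beta_{i,d}(G)$ with $i\geq 1$ have $d\geq 2$, because $I(G)$ is generated in degree $2$. Hence the extremal strand of $G$ reappears in $G\join H$. The same holds for $H$.

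Second, the Kronecker term is supported on $d=i+1$, so it only ever feeds strand index $1$. It is nonzero, for instance at $d=2$, $i=1$, where it equals $mn$; this is positive because both vertex sets are nonempty.

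Combining these, the set of strand indices $d-i$ carrying a nonzero Betti number of $G\join H$ is the union of the supports for $G$, for $H$, and $\{1\}$ (together with index $0$ from $\beta_{0,0}$). Taking the maximum gives the displayed formula.

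As a cross-check, I would also run the argument through Hochster's formula. The independence complex of $G\join H$ is the disjoint union $\Delta(G)\sqcup\Delta(H)$. For $W=W_1\sqcup W_2$ with both parts nonempty, the reduced homology of the induced subcomplex satisfies
$$
\widetilde H_k\cong \widetilde H_k(\Delta(G)[W_1])\oplus \widetilde H_k(\Delta(H)[W_2]) \quad (k\geq 1),
$$
while $\widetilde H_0$ acquires one extra dimension. This reproduces exactly the same strand bookkeeping.

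The main obstacle is not computational. It lies in checking the index ranges: the sum runs over $0\le j\le d-2$, and one must confirm that the $j=0$ term indeed occurs for every $(i,d)$ in the support of $G$, and likewise for $H$. One must also treat the degenerate edgeless case correctly, which is why I state the result with the extra $1$ inside the maximum.
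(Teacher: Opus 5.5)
Your argument is correct. It takes a genuinely different route from the paper, which gives no proof here and simply imports the statement from \cite{Mousivand2012}.

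You derive the regularity formula as a corollary of the Betti-number join formula, Theorem~\ref{Mousivand2012 theorem 2.2}, which the paper states immediately before. Because every summand there is a nonnegative integer, the strands $d-i$ carrying nonzero $\beta_{i,d}(G\join H)$ with $i\geq 1$ are exactly:
\begin{itemize}
\item the strands of $G$ and of $H$, recovered through the $j=0$ terms, which are in range since $d\geq 2$ whenever $i\geq 1$;
\item strand $1$, coming from the cross term, which already contributes $mn>0$ at $(i,d)=(1,2)$.
\end{itemize}

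What your route buys is that it exposes a real defect in the statement as printed. When $G$ and $H$ are both edgeless, $G\join H$ is complete bipartite and has regularity $1$, not $0$. For example, $\overline{K}_2\join\overline{K}_2\cong C_4\cong B_2$. The paper itself records $\reg(B_2)=1$ in Remark~\ref{rem:b2}, whereas the displayed formula would give $0$.

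So your form $\max\{\reg(G),\reg(H),1\}$ is the correct one for nonempty vertex sets. Equivalently, the printed formula holds under the hypothesis that at least one of $G,H$ has an edge. The paper's only use of the result, Theorem~\ref{thm:reg} with $H=nB_b$, satisfies that hypothesis, so nothing downstream changes.

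The citation buys brevity. Your derivation keeps the argument self-contained modulo Theorem~\ref{Mousivand2012 theorem 2.2}, shows precisely where the forced strand $1$ comes from, and makes the needed hypothesis visible.
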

	
	The following lemma is standard, but we record it because it drives the repeated-block analysis, and converts disjoint unions into tensor products of resolutions. 
	\begin{lemma}\label{lem:tensor}
		If $G$ and $H$ are graphs on disjoint vertex sets, then
		$$
		R_{G\sqcup H}/I(G\sqcup H)\cong (R_G/I(G))\otimes_K (R_H/I(H)).
		$$
		Consequently, if
		$
		\mathcal{B}_G(u,v):=\sum_{i,d\geq 0}\beta_{i,d}(G)u^iv^d
		 $ and $
		\mathcal{B}_H(u,v):=\sum_{i,d\geq 0}\beta_{i,d}(H)u^iv^d,
		$
		then
		$ \mathcal{B}_{G\sqcup H}(u,v)=\mathcal{B}_G(u,v)\mathcal{B}_H(u,v)$ and $ \reg(G\sqcup H)=\reg(G)+\reg(H). $
	\end{lemma}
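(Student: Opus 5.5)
The plan is to prove the ring isomorphism first and then obtain the two consequences from it. Write $A:=R_G/I(G)$ and $B:=R_H/I(H)$. Since $V(G)\cap V(H)=\emptyset$, we have $R_{G\sqcup H}=R_G\otimes_K R_H$. Because $E(G\sqcup H)=E(G)\sqcup E(H)$, the edge ideal decomposes as
\[
I(G\sqcup H)=I(G)R_{G\sqcup H}+I(H)R_{G\sqcup H}.
\]
This is exactly the image of $I(G)\otimes_K R_H+R_G\otimes_K I(H)$ under the identification above. Right exactness of $\otimes_K$ then gives $(R_G\otimes_K R_H)/(I(G)\otimes R_H+R_G\otimes I(H))\cong A\otimes_K B$. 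The isomorphism respects the grading when we give each variable degree one; in fact it respects the finer $\mathbb{Z}^{V(G)}\times\mathbb{Z}^{V(H)}$-grading.

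For the Betti polynomial, take minimal graded free resolutions $F_\bullet\to A$ over $R_G$ and $G_\bullet\to B$ over $R_H$. Form the tensor product complex $F_\bullet\otimes_K G_\bullet$, which is a complex of free $R_G\otimes_K R_H$-modules with
\[
(F\otimes G)_i=\bigoplus_{p+q=i}F_p\otimes_K G_q .
\]
Over the field $K$, the Künneth formula gives $H_i(F\otimes_K G)=\bigoplus_{p+q=i}H_p(F)\otimes_K H_q(G)$. This vanishes for $i>0$ and equals $A\otimes_K B$ for $i=0$, so the complex is a free resolution of $A\otimes_K B$. It is minimal because each differential is $\partial_F\otimes 1\pm 1\otimes\partial_G$, and both pieces have entries in the homogeneous maximal ideal. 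The rank of the free summand generated in degree $d$ at position $i$ is therefore
\[
\beta_{i,d}(G\sqcup H)=\sum_{p+q=i,\ e+f=d}\beta_{p,e}(G)\,\beta_{q,f}(H),
\]
which is precisely the coefficient identity $\mathcal{B}_{G\sqcup H}=\mathcal{B}_G\mathcal{B}_H$.

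As an alternative I would mention Hochster's formula. For disjoint vertex sets, $\Delta(G\sqcup H)=\Delta(G)\star\Delta(H)$, and the Künneth formula for joins gives $\widetilde H_{k+1}(\Delta_1\star\Delta_2)\cong\bigoplus_{s+t=k}\widetilde H_s(\Delta_1)\otimes\widetilde H_t(\Delta_2)$. This route requires care with the empty face, so I prefer the resolution argument.

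Regularity is read off the product. We have $\reg=\max\{d-i:\beta_{i,d}\neq0\}$. Put $u=t^{-1}$ and $v=t$; then $d-i$ is the $t$-exponent. Every coefficient of $\mathcal{B}_G$ and $\mathcal{B}_H$ is nonnegative, so no cancellation occurs when the two polynomials are multiplied. Taking any pair $(p,e)$ and $(q,f)$ that attains the maximal value of $d-i$ in each factor, the product has a nonzero term at $d-i=\reg(G)+\reg(H)$. No term can exceed this, because $(e-p)+(f-q)\le\reg(G)+\reg(H)$.

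The only step I expect to need real care is the minimality and exactness of the tensor complex, that is, applying Künneth correctly over $K$ while the modules live over different polynomial rings. The key point is that $F_p\otimes_K G_q$ is free over $R_G\otimes_K R_H$, and that exactness is checked as $K$-vector spaces, where Künneth applies directly.
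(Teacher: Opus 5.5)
Your proposal is correct and takes the same route as the paper. You identify the edge ring of $G\sqcup H$ with $(R_G/I(G))\otimes_K(R_H/I(H))$, tensor the two minimal graded free resolutions over $K$, and read off that homological and internal degrees add; beyond that you only make explicit what the paper leaves implicit, namely exactness via K\"unneth over $K$, minimality because $\partial_F\otimes 1\pm 1\otimes\partial_G$ has entries in the homogeneous maximal ideal, and nonnegativity of Betti numbers ruling out cancellation in $\reg(G\sqcup H)=\reg(G)+\reg(H)$.
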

	
	\begin{proof}
		Since the variable sets are disjoint, the quotient by $I(G\sqcup H)=I(G)+I(H)$ is exactly the tensor product of the two quotient rings over $K$. Let $F_\bullet$ and $L_\bullet$ be minimal graded free resolutions of $R_G/I(G)$ and $R_H/I(H)$ over $R_G$ and $R_H$, respectively. Then $F_\bullet\otimes_K L_\bullet$ is a minimal graded free resolution of $(R_G/I(G))\otimes_K(R_H/I(H))$ over the polynomial ring in the union of the two variable sets. The homological degrees and internal shifts add, and hence the Betti polynomial multiplies. The regularity statement follows, since $ \reg(M)=\max\{d-i:\beta_{i,d}(M)\neq 0\} $ and tensoring over disjoint variables adds the shifts.
	\end{proof}
	
	The next theorem packages the single-block Betti data proved in \cite{AnandGuptaRatherSingh2026}. It is the basic external input for our new results, which records the two-strand Betti table of the block $B_b$.
	\begin{theorem}[Theorems 3.1 and 3.11 \cite{AnandGuptaRatherSingh2026}]\label{thm:blockbetti}
		For $b\geq 3$, let
		$$
		\lambda_i=
		\begin{cases}
			2i\binom{b}{i+1}+b\binom{b-1}{i-1}, & i=1,\\[3mm]
			2i\binom{b}{i+1}+2b\binom{b-1}{i-1}+\binom{b}{2}, & i=3,\\[3mm]
			2i\binom{b}{i+1}+2b\binom{b-1}{i-1}, & \text{otherwise},
		\end{cases}
		$$
		where $1\leq i\leq 2b-2$, and let $\lambda_i=0$ outside this range. Let
		$$
		\mu_i=
		\lambda_{i+1}
		+\binom{2b-2}{i+2}
		-(2b-2)\binom{2b-2}{i+1}
		+(b^2-3b+1)\binom{2b-2}{i},
		$$
		with $1\leq i\leq 2b-2$. Then
		$ \beta_{i,j}(B_b)=0$ whenever $j-i\notin\{1,2\}, $
		and
		$ \beta_{i,i+1}(B_b)=\lambda_i,$  $ \beta_{i,i+2}(B_b)=\mu_i$, $ \reg(B_b)=2, $ and $ \pd(B_b)=2b-2. $
	\end{theorem}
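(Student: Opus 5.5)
The plan is to prove the statement directly from Hochster's formula, applied to the independence complex $\Delta(B_b)$; this should reproduce Theorems 3.1 and 3.11 of \cite{AnandGuptaRatherSingh2026}. We begin by identifying $\Delta(B_b)$.

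\emph{Step 1: the independence complex.} Any independent set of $B_b$ meets each clique $\{x_1,\dots,x_b\}$ and $\{y_1,\dots,y_b\}$ in at most one vertex. A pair $\{x_i,y_j\}$ is independent exactly when $i\neq j$. Hence $\Delta(B_b)$ is one-dimensional: it is the crown graph, namely $K_{b,b}$ minus a perfect matching, viewed as a simplicial complex.

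\emph{Step 2: homology of induced subcomplexes.} For $W\subseteq V(B_b)$, the induced subcomplex $\Delta[W]$ is a graph. Its only possibly nonzero reduced homology is
\[
\dim\widetilde H_0(\Delta[W]) = c(W)-1, \qquad \dim\widetilde H_1(\Delta[W]) = e(W)-|W|+c(W),
\]
where $c(W)$ and $e(W)$ are its numbers of components and edges. In Hochster's formula with $|W|=d$ we need $d-i-1\in\{0,1\}$, so $\beta_{i,j}(B_b)=0$ unless $j-i\in\{1,2\}$.

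Then $\beta_{i,i+1}(B_b)$ is the sum of $c(W)-1$ over all $W$ with $|W|=i+1$. Write $W=X'\cup Y'$ with $|X'|=p$, $|Y'|=q$ and $p+q=i+1$.
\begin{itemize}[leftmargin=8mm]
\item If one side is empty, $\Delta[W]$ is discrete and contributes $p+q-1$.
\item If $p,q\ge 2$, or $p=1,q\ge 1$ (or symmetrically), then $\Delta[W]$ is connected except in one situation.
\item That situation is $p=1$ with $Y'$ containing the matched partner of the single $x$. The partner is then isolated; the exception $p=q=1$ with matched vertices gives two isolated points. The case $p=q=2$ with $X'=\{x_i,x_j\}$ and $Y'=\{y_i,y_j\}$ also gives two disjoint edges.
\end{itemize}
Summing these contributions gives the three cases of $\lambda_i$:
\begin{itemize}[leftmargin=8mm]
\item the one-sided term $2i\binom{b}{i+1}$;
\item the "isolated partner" terms $2b\binom{b-1}{i-1}$, halved when $i=1$ because the two symmetric counts coincide;
\item the extra $\binom b2$ at $i=3$, from the two-edge configurations.
\end{itemize}

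\emph{Step 3: the quadratic strand.} For $\mu_i=\beta_{i,i+2}$, rather than counting cycles directly, we use the identity $\dim\widetilde H_1-\dim\widetilde H_0=e(W)-|W|+1$ for every nonempty $W$. Summing over $|W|=i+2$ gives
\[
\mu_i=\lambda_{i+1}+\sum_{|W|=i+2}\bigl(e(W)-i-1\bigr).
\]
Each edge $\{x_k,y_l\}$ with $k\neq l$ lies in $\binom{2b-2}{i}$ such sets, and there are $b^2-b$ edges. So the sum is
\[
(b^2-b)\binom{2b-2}{i}-(i+1)\binom{2b}{i+2}.
\]
Rewriting $\binom{2b}{i+2}$ by Vandermonde in terms of $\binom{2b-2}{\cdot}$ and absorbing $(i+1)$ should give the stated combination $\binom{2b-2}{i+2}-(2b-2)\binom{2b-2}{i+1}+(b^2-3b+1)\binom{2b-2}{i}$. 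I expect this binomial bookkeeping, together with the careful case count in Step 2 (especially the $i=1$ and $i=3$ corrections), to be the main obstacle; I would verify it against $b=3$ by direct computation.

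\emph{Step 4: regularity and projective dimension.} For $\reg(B_b)=2$, we need only a nonzero $\beta_{i,i+2}$. For $b\ge3$, take $W=\{x_1,x_2,x_3,y_1,y_2,y_3\}$: $\Delta[W]$ is a $6$-cycle, giving $\widetilde H_1\neq0$.

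For the projective dimension, Hochster's formula gives $i=d-1-k$ with $d\le 2b$ and $k\in\{0,1\}$.
\begin{itemize}[leftmargin=8mm]
\item $i=2b-1$ would require $d=2b$ and $k=0$, i.e. a disconnected full crown. But the crown is connected for $b\ge3$, so this is impossible.
\item $i=2b-2$ is attained with $d=2b$ and $k=1$: the full crown has first Betti number $b(b-1)-2b+1>0$.
\end{itemize}
Hence $\pd(B_b)=2b-2$.
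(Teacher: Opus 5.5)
Your proof is correct. The paper itself does not prove this statement: it quotes it from \cite{AnandGuptaRatherSingh2026}, describing that source's method as Hochster's formula plus a Hilbert-series comparison. You give a self-contained proof along essentially those lines.

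\textbf{Linear strand and the main difference.} Your Step 2 is the Hochster count for the linear strand. For the quadratic strand you use a subset-by-subset Euler-characteristic identity instead of the global Hilbert-series comparison. The two routes are equivalent. Summing $e(W)-|W|+1=\dim\widetilde H_1(\Delta[W])-\dim\widetilde H_0(\Delta[W])$ over $|W|=d$ gives $(-1)^d$ times the coefficient of $t^d$ in
\[
(1-t)^{2b}\Hilb_{K[\Delta(B_b)]}(t)=(1-t)^{2b-2}\bigl(1+(2b-2)t+(b^2-3b+1)t^2\bigr).
\]
For $d=i+2$ this coefficient is exactly $\binom{2b-2}{i+2}-(2b-2)\binom{2b-2}{i+1}+(b^2-3b+1)\binom{2b-2}{i}$. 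So the Hilbert-series form yields the stated combination immediately.

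\textbf{Closing your binomial step.} Your direct count instead gives $b(b-1)\binom{2b-2}{i}-(i+1)\binom{2b}{i+2}$, which needs one identity. It holds for all $i$, so the $b=3$ check is unnecessary. With $N=2b-2$ you need
\[
(i+1)\binom{N+2}{i+2}=(N+1)\binom{N}{i}+N\binom{N}{i+1}-\binom{N}{i+2}.
\]
Expand $\binom{N+2}{i+2}=\binom{N}{i}+2\binom{N}{i+1}+\binom{N}{i+2}$. Then use $(i+2)\binom{N}{i+2}=(N-i-1)\binom{N}{i+1}$ and $(N-i)\binom{N}{i}=(i+1)\binom{N}{i+1}$; everything cancels.

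If you take the Hilbert-series route, use $b^2-3b+1$ as in the statement. The factor $(b-1)(b-2)$ in the second form of Theorem~\ref{thm:hilbert} is off by one, since $h(1)$ must equal the $b(b-1)$ edges of the crown.

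\textbf{Connectivity in Step 2.} You should write out the connectivity assertion in Step 2, which you currently only state. It also supplies the connectedness of the full crown that you use for $\pd(B_b)$. Take $p,q\ge 2$.
\begin{itemize}
\item If $q\ge 3$, any two vertices of $X'$ have a common neighbour in $Y'$, and every vertex of $Y'$ has a neighbour in $X'$.
\item If $p\ge 3$, the symmetric argument applies.
\item If $p=q=2$, the induced graph is $2K_2$ exactly when the two index sets coincide, and is connected otherwise.
\end{itemize}
With that in place, the case count for $\lambda_i$ (including the halving at $i=1$ and the extra $\binom{b}{2}$ at $i=3$), the $6$-cycle witness for $\reg(B_b)=2$, and the argument for $\pd(B_b)=2b-2$ are all correct as you wrote them.
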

	
	The next remark isolates the exceptional case $b=2$.
	\begin{remark}\label{rem:b2}
		 When $b=2$, the block $B_2$ is isomorphic to the cycle $C_4$. In that case $\reg(B_2)=1$, while the remaining structural arguments of this paper still apply after replacing the block data by the Betti table of $C_4$. To keep the formulas transparent, the explicit Betti-strand formulas in Sections~\ref{sec:linear} and \ref{sec:higher} are written under the standing assumption $b\geq 3$, and the case $b=2$ is mentioned separately where needed.
	\end{remark}
	
	The next proposition collects the join criteria that will be used later in the discussion of well-coveredness and shellability.
	
	\begin{proposition}[Proposition 2.8 \cite{AnandRoy2021}]\label{prop:joinproperties}
		 Let $G_1,\dots,G_t$ be graphs on pairwise disjoint vertex sets and let $G:=G_1\join \cdots \join G_t$. Then the following hold.
		\begin{enumerate}[label=\textup{(\roman*)},leftmargin=8mm]
			\item $G$ is well-covered if and only if each $G_i$ is well-covered and $\alpha(G_i)=\alpha(G_j)$ for all $i,j$.
			\item $G$ is vertex decomposable, shellable, and Cohen--Macaulay if and only if every $G_i$ is complete.
			\item $G$ is sequentially Cohen--Macaulay if and only if one factor is sequentially Cohen--Macaulay and all remaining factors are complete.
			\item If some factor has nonzero induced matching number, then
			$ \nu(G)=\max_i \nu(G_i). $
		\end{enumerate}
	\end{proposition}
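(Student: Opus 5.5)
The whole proposition follows from one observation: the independence complex of a join is the disjoint union of the independence complexes of the factors,
$$
\Delta(G_1\join\cdots\join G_t)=\Delta(G_1)\sqcup\cdots\sqcup\Delta(G_t).
$$
To see this, note that any two vertices lying in different factors are adjacent in $G$. Hence an independent set of $G$ lies inside a single $V(G_i)$, and there it is independent exactly when it is independent in $G_i$. In particular, the maximal independent sets of $G$ are precisely the maximal independent sets of the individual $G_i$. I would prove this first and then read off each of (i)--(iv) from it. Throughout I assume $t\geq 2$; otherwise there is nothing to prove.

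For (i), $G$ is well-covered when all facets of $\Delta(G)$ have the same size. The facets of $\Delta(G)$ are the facets of the various $\Delta(G_i)$. So they all have equal size exactly when each $\Delta(G_i)$ is pure, i.e.\ each $G_i$ is well-covered, and all the common sizes $\alpha(G_i)$ coincide.

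For (ii), I would use the chain vertex decomposable $\Rightarrow$ shellable $\Rightarrow$ Cohen--Macaulay. Suppose first that every $G_i$ is complete. Then each $\Delta(G_i)$ is a finite set of isolated points, so $\Delta(G)$ is $0$-dimensional, and every $0$-dimensional complex is vertex decomposable. Conversely, suppose $\Delta(G)$ is Cohen--Macaulay. By Reisner's criterion a Cohen--Macaulay complex of dimension $\geq 1$ is connected, since $\widetilde H_0$ must vanish. But $\Delta(G)$ has $t\geq 2$ components, so it must be $0$-dimensional. This forces every $\Delta(G_i)$ to consist of points, i.e.\ $\alpha(G_i)=1$, which says each $G_i$ is complete.

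For (iii), I would use Duval's characterization: $\Delta$ is sequentially Cohen--Macaulay if and only if every pure $k$-skeleton $\Delta^{[k]}$ is Cohen--Macaulay. For $k\geq 1$, the pure skeleton $\Delta(G)^{[k]}$ is the disjoint union of the complexes $\Delta(G_i)^{[k]}$. A nonempty Cohen--Macaulay complex of dimension $\geq 1$ is connected, so at most one factor can have $\dim\Delta(G_i)\geq 1$. All the remaining factors therefore have $0$-dimensional independence complexes, i.e.\ they are complete. Now write $\Delta(G)=\Delta(G_j)\sqcup\{\text{points}\}$. For $k\geq 1$ we have $\Delta(G)^{[k]}=\Delta(G_j)^{[k]}$, and the $0$-skeleton is always Cohen--Macaulay. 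Hence $G$ is sequentially Cohen--Macaulay if and only if $G_j$ is. If all factors are complete, any one of them serves as the sequentially Cohen--Macaulay factor.

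For (iv), take an induced matching of $G$ and suppose two of its edges are not contained in the same factor. Then, in every configuration, some join edge connects an endpoint of one matching edge to an endpoint of the other, and this violates inducedness. So every induced matching of size $\geq 2$ lies inside a single $G_i$. Since $G[V(G_i)]=G_i$, such a matching is induced in $G$ if and only if it is induced in $G_i$. Size-one matchings always exist whenever the graph has an edge. The hypothesis that some $\nu(G_i)\geq 1$ guarantees that the maximum $\max_i\nu(G_i)$ is attained by a genuine matching. Together these give $\nu(G)=\max_i\nu(G_i)$.

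The only delicate step is (iii), where one must invoke the correct characterization of sequential Cohen--Macaulayness and handle the $0$-dimensional pure skeleton carefully. Everything else is bookkeeping once the disjoint-union description of $\Delta(G)$ is in hand.
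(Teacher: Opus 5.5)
The paper gives no proof of this proposition; it is quoted from \cite{AnandRoy2021}, so your argument has to stand on its own. Reducing everything to $\Delta(G)=\Delta(G_1)\sqcup\cdots\sqcup\Delta(G_t)$ is the right idea. Your treatments of (i), of (iii) via Duval's pure-skeleton criterion, and of (iv) are correct. In (iv), the case check that two matching edges not lying in one factor are always linked by a join edge is exactly what is needed.

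The gap is in (ii), at the step ``vertex decomposable $\Rightarrow$ shellable $\Rightarrow$ Cohen--Macaulay''. That chain is valid only for the \emph{pure} notions (Provan--Billera vertex decomposability, pure shellability). For graphs the usual notions are the non-pure ones of Bj\"orner--Wachs and Woodroofe \cite{Woodroofe2009}. Those only give vertex decomposable $\Rightarrow$ shellable $\Rightarrow$ sequentially Cohen--Macaulay, and with them (ii), read property by property, is false.

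For instance, $P_3=K_1\join\overline{K}_2$ has $\Delta(P_3)=\{u\}\sqcup\langle v,w\rangle$. Here $u$ is a shedding vertex: its link is $\{\emptyset\}$ and its deletion is a simplex. So $P_3$ is vertex decomposable and shellable, although $\overline{K}_2$ is not complete. Your own argument from (iii) shows that, non-purely, $G$ is vertex decomposable (resp.\ shellable) if and only if one factor is and all others are complete. Only Cohen--Macaulayness forces every factor to be complete.

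So you must do one of two things. Either declare that vertex decomposable and shellable are meant in the pure sense; then your proof is complete, since a non-pure $\Delta(G)$ is excluded outright. Or read (ii) as the single conjunction ``vertex decomposable, shellable and Cohen--Macaulay''; then your Reisner argument alone gives the forward direction.

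The choice matters because the paper later applies (ii) separately to vertex decomposability and shellability. In the non-pure sense, $K_1\join nB_b$ is in fact vertex decomposable for $b\geq 3$. The reason is that $\Gamma_b$ is a connected graph, hence vertex decomposable, joins preserve this, and the isolated point $u_1$ can be shed.

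Finally, ``for $t=1$ there is nothing to prove'' is inaccurate. For $t=1$, statement (ii) is false, since $\overline{K}_2$ is Cohen--Macaulay but not complete. So $t\geq 2$ and $V(G_i)\neq\emptyset$ are genuine hypotheses, not conventions.
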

	
	 The existing literature treats, in one direction, the single-block family $\overline{K}_a\join (K_b+K_2)$ and, in another direction, the repeated clique-block family $\overline{K}_a\join nK_b$ \cite{AnandGuptaRatherSingh2026}. What is missing is a parallel theory for the repeated mixed block $\overline{K}_a\join n(K_b+K_2)$. In particular, formulas for the independence polynomial, Hilbert series, repeated-block Betti convolution, and the resulting regularity and well-covered thresholds do not seem to have been written down. For $n=1$ in repeated mixed block, we recover results of \cite{AnandGuptaRatherSingh2026}. The present paper fills precisely that gap.
	
	\section{Independence complex, enumerators, and Hilbert series}\label{sec:independence}
	
	From now on, unless explicitly stated otherwise, we assume $a\geq 1$, $n\geq 1$, and $b\geq 3$. Let
	$$
	G_{a,b,n}:=MECS_{b,n}^a=\overline{K}_a\join (nB_b).
	$$
	Let $ A=\{u_1,\dots,u_a\} $ be the  the vertices of $\overline{K}_a$, and for each $1\leq s\leq n$ let
	$$
	V(B_b^{(s)})=\{x_1^{(s)},\dots,x_b^{(s)},y_1^{(s)},\dots,y_b^{(s)}\}
	$$
	be the vertices of the $s$-th copy of $B_b$. See $MECS_{3,2}^{2}$ in Figure \ref{fig:mecsgraph}.
	
	\begin{figure}[H]
		\centering
		\begin{tikzpicture}[scale=0.95,
			every node/.style={circle,inner sep=1.7pt,draw},
			redv/.style={fill=red!65,draw=red!70!black},
			bluev/.style={fill=blue!20,draw=blue!60!black},
			greenv/.style={fill=green!20,draw=green!50!black},
			line width=0.6pt]
			
			\node[redv,label=above:$u_1$] (u1) at (0,1.2) {};
			\node[redv,label=below:$u_2$] (u2) at (0,-1.2) {};
			
			\node[bluev,label=above:$x_1^{(1)}$] (x11) at (3,2.1) {};
			\node[bluev,label=left:$x_2^{(1)}$]  (x21) at (2.2,0.9) {};
			\node[bluev,label=right:$x_3^{(1)}$] (x31) at (3.8,0.9) {};
			
			\node[greenv,label=left:$y_1^{(1)}$]  (y11) at (2.2,-0.9) {};
			\node[greenv,label=right:$y_2^{(1)}$] (y21) at (3.8,-0.9) {};
			\node[greenv,label=below:$y_3^{(1)}$] (y31) at (3,-2.1) {};
			
			\node[bluev,label=above:$x_1^{(2)}$] (x12) at (-3,2.1) {};
			\node[bluev,label=right:$x_2^{(2)}$]  (x22) at (-2.2,0.9) {};
			\node[bluev,label=left:$x_3^{(2)}$] (x32) at (-3.8,0.9) {};
			
			\node[greenv,label=right:$y_1^{(2)}$]  (y12) at (-2.2,-0.9) {};
			\node[greenv,label=left:$y_2^{(2)}$] (y22) at (-3.8,-0.9) {};
			\node[greenv,label=below:$y_3^{(2)}$] (y32) at (-3,-2.1) {};
			
			\draw (x11)--(x21)--(x31)--(x11);
			\draw (y11)--(y21)--(y31)--(y11);
			\draw (x11)--(y11);
			\draw (x21)--(y21);
			\draw (x31)--(y31);
			
			\draw (x12)--(x22)--(x32)--(x12);
			\draw (y12)--(y22)--(y32)--(y12);
			\draw (x12)--(y12);
			\draw (x22)--(y22);
			\draw (x32)--(y32);
			
			\foreach \u in {u1,u2}{
				\foreach \v in {x11,x21,x31,y11,y21,y31,x12,x22,x32,y12,y22,y32}{
					\draw[gray!55] (\u)--(\v);
				}
			}
			
			\node[draw=none,rectangle,align=center] at (0,-3.4) {\small Two copies of $B_3$ joined to $\overline{K}_2$};
			
		\end{tikzpicture}
		\caption{A schematic picture of $MECS_{3,2}^{2}$.}
		\label{fig:mecsgraph}
	\end{figure}
	
	 The earlier split-like papers focus on homological quantities after the graph family has already been fixed \cite{AnandGuptaRatherSingh2026}. For the repeated extended block studied here, even the most basic structural parameters are not available in a single place. We discuss them briefly.
	
	Clearly, $ |V(G_{a,b,n})|=a+2bn $
		$ |E(G_{a,b,n})|=nb^2+2abn=nb(b+2a), $  and diameter  $
		\operatorname{diam}(G_{a,b,n})=2. $  The clique number of $G_{a,b,n}$ is $\omega(G_{a,b,n})=b+1$.  	For the chromatic number, color every vertex of $A$ with one new color, say color $0$. In each block $B_b$, color $x_i^{(s)}$ with color $i$ and color $y_i^{(s)}$ with color $i+1$ modulo $b$. Since the $x$-vertices form a clique, the $y$-vertices form a clique, and $x_i^{(s)}$ is adjacent to $y_i^{(s)}$, this is a proper $b$-coloring of each block. Distinct blocks may reuse the same $b$ colors because there are no edges between them. Thus $\chi(G_{a,b,n})\leq b+1$. Since $\chi(G)\geq \omega(G)$ for every graph, we conclude that $\chi(G_{a,b,n})=b+1$. 	For the independence number, no independent set can meet both $A$ and a block vertex, because the join connects every vertex of $A$ to every vertex outside $A$. Therefore every independent set lies either entirely in $A$ or entirely in $nB_b$. The set $A$ itself is independent of size $a$. In one block $B_b$, the largest independent sets have size $2$, namely sets $\{x_i^{(s)},y_j^{(s)}\}$ with $i\neq j$. Since the blocks are disjoint, $nB_b$ has independent sets of size $2n$, obtained by choosing such a pair in each block. Hence, we obtain  independence number as
		$ \alpha(G_{a,b,n})=\max\{a,2n\}. $
		Finally, with identity $\tau(G)+\alpha(G)=|V(G)|$, we have  the vertex cover number as
		$ \tau(G_{a,b,n})=a+2bn-\max\{a,2n\}. $

	The next proposition classifies minimal vertex covers of $G_{a,b,n}$.
	\begin{proposition}\label{prop:mincovers}
		 Let $W:=V(G_{a,b,n})\setminus A$. A subset $C\subseteq V(G_{a,b,n})$ is a minimal vertex cover if and only if one of the following holds: \textup{(i)} $C=W$; and \textup{(ii)} $A\subseteq C$ and $C\setminus A$ is a minimal vertex cover of $nB_b$. 
		Consequently, the minimal vertex cover sizes are $ 2bn$ and $ a+2n(b-1). $
	\end{proposition}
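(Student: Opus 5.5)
Minimal vertex covers are exactly the complements of maximal independent sets, so I will classify the maximal independent sets of $G_{a,b,n}$ and then take complements. As already observed in the computation of $\alpha(G_{a,b,n})$, every independent set lies entirely in $A$ or entirely in $W$, because every vertex of $A$ is adjacent to every vertex of $W$.

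\emph{Step 1: classify maximal independent sets.} Let $F$ be a maximal independent set. If $F$ meets $A$, then $F\subseteq A$. Since $A$ is itself independent, maximality forces $F=A$. If $F$ does not meet $A$, then $F\subseteq W$ and $F$ is independent in $nB_b$. Since $a\ge 1$, each vertex of $A$ is adjacent to some vertex of $F$ (provided $F\neq\emptyset$), so $F$ is maximal in $G_{a,b,n}$ exactly when it is maximal in $nB_b$. Here $F=\emptyset$ is never maximal because $W\neq\emptyset$. Conversely, $A$ is maximal: any vertex of $W$ is adjacent to all of $A$, and $a\ge1$. Likewise every maximal independent set of $nB_b$ is maximal in $G_{a,b,n}$, since any $u\in A$ is adjacent to its vertices and it is nonempty.

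\emph{Step 2: take complements.} The complement of $F=A$ is $C=W$, which gives (i). The complement of a maximal independent set $F\subseteq W$ of $nB_b$ is $C=A\cup(W\setminus F)$. Here $W\setminus F$ is precisely a minimal vertex cover of $nB_b$, which gives (ii). The correspondence is bijective in both directions, so this yields the ``if and only if''.

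\emph{Step 3: compute the sizes.} Case (i) gives $|W|=2bn$. For case (ii) I need every maximal independent set of $nB_b$ to have size exactly $2n$, i.e. $B_b$ is well-covered with $\alpha(B_b)=2$. A maximal independent set of a disjoint union is the union of maximal independent sets of the components. In $B_b$, an independent set contains at most one $x$ and at most one $y$. A singleton $\{x_i\}$ is not maximal, because $y_j$ with $j\neq i$ is nonadjacent to it; this step uses $b\ge 2$. Hence every maximal independent set of $B_b$ is $\{x_i,y_j\}$ with $i\neq j$, of size $2$. So $|C|=a+2bn-2n=a+2n(b-1)$.

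The only step needing care is this well-coveredness check for $B_b$, in particular ruling out singleton maximal independent sets. It is elementary, so I expect no real obstacle.
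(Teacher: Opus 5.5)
Your proof is correct, and it reaches the same dichotomy as the paper by the dual route. The paper works directly with covers. If some $u\in A$ and $w\in W$ were both omitted, the edge $\{u,w\}$ would be uncovered, so every cover contains $A$ or $W$, and the paper then splits into the two cases by minimality. You instead use the correspondence $C\leftrightarrow V(G_{a,b,n})\setminus C$ between minimal vertex covers and maximal independent sets, and classify the latter using the fact that every independent set lies in $A$ or in $W$.

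The two arguments are logically equivalent, but yours buys two things. First, the converse direction comes for free: $A\cup C'$ is minimal whenever $C'$ is a minimal cover of $nB_b$, because maximal independent sets of $nB_b$ are nonempty. The paper only asserts that this minimality is ``immediate''.

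Second, and more substantively, the paper's size computation in case (ii) has a gap that your Step 3 fills. The paper uses $\alpha(B_b)=2$ to get minimum vertex cover number $2b-2$, and then treats every minimal cover of $nB_b$ as having that size. This silently assumes $B_b$ is well-covered, i.e.\ that minimal and minimum covers coincide. You check exactly this: no singleton $\{x_i\}$ (and symmetrically no $\{y_j\}$) is maximal when $b\geq 2$, so every maximal independent set of $B_b$ has the form $\{x_i,y_j\}$ with $i\neq j$. That is what actually justifies the size $a+2n(b-1)$.
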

	
	\begin{proof}
		Suppose $C$ is a vertex cover. If some $u\in A$ and some $w\in W$ both lie outside $C$, then the edge ${u,w}$ is uncovered, which is impossible. Therefore every vertex cover contains all of $A$ or all of $W$. If $W\subseteq C$, then minimality forces $C=W$, because deleting any block vertex still leaves all vertices of $A$ uncovered with respect to that vertex. If $A\subseteq C$, then the only remaining edges to be covered are those inside $nB_b$, and therefore $C\setminus A$ must be a minimal vertex cover of $nB_b$.
		
		Conversely, both kinds of sets are easily seen to be vertex covers, and their minimality is immediate. It remains to compute the size in case \textup{(ii)}. In $G_{a,b,n}$, each block $B_b$ has independence number $2$, and hence the minimum vertex cover number is $2b-2$. Since the blocks are disjoint, a minimal cover of $nB_b$ is the disjoint union of minimal covers of the blocks and therefore has size $n(2b-2)$. Adding the $a$ vertices of $A$, we get the required second size formula.
	\end{proof}
	
	 When $n=1$, Proposition~\ref{prop:mincovers} reduces to the extended split-like case handled in \cite{AnandGuptaRatherSingh2026}. The new point here is that the repeated $B_b$-block structure preserves the dichotomy all block vertices versus all join vertices plus internal covers, but the threshold now depends on $2n$ rather than $2$.
	 The single-block paper \cite{AnandGuptaRatherSingh2026} works mainly from Hochster's formula and Hilbert-series comparison for one block. For the repeated family considered here, the independence complex admits a clean decomposition that does not seem to have been observed before. That decomposition immediately yields independent-set counts, purity information, the Hilbert series, and the top Betti number.
	
	The next proposition describes the independence complex of $G_{a,b,n}$.
	\begin{proposition}\label{prop:complexdecomp}
		 Let $\Gamma_b:=\Delta(B_b)$. Then
		$ \Gamma_b\cong C_{b,b} $ as a $1$-dimensional simplicial complex, and
		$ \Delta(G_{a,b,n})=\langle A\rangle \cup \Gamma_b^{\star n}, $
		where $\langle A\rangle$ is the simplex on the vertex set $A$ and $\Gamma_b^{\star n}$ is the $n$-fold simplicial join of $\Gamma_b$ with itself. The two components are disjoint on the vertex level. Furthermore, the following hold;
		\begin{enumerate}[label=\textup{(\roman*)},leftmargin=8mm]
			\item $\dim \Gamma_b=1$ and $\dim \Gamma_b^{\star n}=2n-1$;
			\item the facets of $\Gamma_b^{\star n}$ are precisely the unions of one edge of $\Gamma_b$ from each block;
			\item the number of facets of $\Gamma_b^{\star n}$ is $\big(b(b-1)\big)^n$;
			\item $\Delta(G_{a,b,n})$ is pure if and only if $a=2n$.
		\end{enumerate}
	\end{proposition}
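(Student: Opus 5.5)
I will first identify $\Gamma_b=\Delta(B_b)$ directly. Inside one block, the $x$-vertices form a clique and so do the $y$-vertices. Hence an independent set contains at most one $x_i$ and at most one $y_j$. The pair $\{x_i,y_j\}$ is independent exactly when $i\neq j$, because the only cross edges are the matching edges $x_iy_i$. So $\Gamma_b$ consists of the $2b$ vertices together with the edges $\{x_i,y_j\}$, $i\neq j$. This is precisely the crown graph $C_{b,b}$, that is, $K_{b,b}$ minus a perfect matching, viewed as a $1$-dimensional complex. It has $b(b-1)$ edges. For $b\geq 3$ every vertex lies on an edge, so all facets are edges and $\Gamma_b$ is pure of dimension $1$. (This fails for $b=2$ only in the sense that $\Gamma_2$ is still pure with two disjoint edges, so the argument is the same.)

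Next comes the global decomposition. As noted before Proposition~\ref{prop:mincovers}, every independent set of $G_{a,b,n}$ lies entirely in $A$ or entirely in $W=V(nB_b)$. This is because of the join edges between $A$ and $W$. Every subset of $A$ is independent, which gives $\langle A\rangle$. A subset of $W$ is independent in $nB_b$ if and only if its intersection with each block is independent in that block, since there are no edges between blocks. This says exactly that $\Delta(nB_b)=\Gamma_b^{\star n}$, by the definition of the simplicial join on disjoint vertex sets. The two pieces share no vertex, because $A\cap W=\emptyset$. This proves $\Delta(G_{a,b,n})=\langle A\rangle\cup\Gamma_b^{\star n}$.

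For the enumerated claims I will use the standard fact that the facets of a join $\Delta_1\star\Delta_2$ are exactly the unions $F_1\cup F_2$ of facets $F_1$ of $\Delta_1$ and $F_2$ of $\Delta_2$. Maximality splits coordinatewise because the vertex sets are disjoint. Iterating this over the $n$ blocks gives (ii): each facet is a union of one edge from each block. It also gives (iii), since there are $(b(b-1))^n$ such choices. Each such facet has $2n$ vertices, so $\dim\Gamma_b^{\star n}=2n-1$, which gives (i). In particular $\Gamma_b^{\star n}$ is pure.

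For (iv), the facets of $\Delta(G_{a,b,n})$ are the facet $A$, of size $a$, together with the facets of $\Gamma_b^{\star n}$, all of size $2n$. No facet of one piece contains a facet of the other, because they are vertex-disjoint. The complex is therefore pure exactly when $a=2n$.

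The only mildly delicate point is the facet-of-join description, in particular checking that maximality in the join forces maximality in each factor. I would verify this directly: if $F_1\cup F_2$ is a face of the join and $F_1$ is not maximal in $\Delta_1$, then enlarging $F_1$ enlarges the union. Everything else is bookkeeping.
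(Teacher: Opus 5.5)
Your proposal is correct and follows essentially the same route as the paper's proof: identify $\Gamma_b$ as the crown complex, split independent sets between $A$ and $W=V(nB_b)$, use that disjoint unions of graphs give simplicial joins of independence complexes, and then read off the facets, their number $\bigl(b(b-1)\bigr)^n$, the dimension, and purity from the facet sizes $a$ and $2n$. The differences are cosmetic: you obtain $\dim\Gamma_b^{\star n}$ from the facet sizes rather than from $\dim(\Delta_1\star\Delta_2)=\dim\Delta_1+\dim\Delta_2+1$, and you verify the facet description of a join explicitly where the paper assumes it. Your parenthetical about $b=2$ is oddly worded, since nothing fails there: every vertex of $\Gamma_b$ lies on an edge for all $b\geq 2$.
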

	
	\begin{proof}
		By definition of $B_b$, an independent set in one block contains at most one $x$-vertex and at most one $y$-vertex, and the pair $\{x_i,y_i\}$ is forbidden, as it is a matching edge. Thus the $2$-element independent sets of $B_b$ are exactly the pairs $\{x_i,y_j\}$ with $i\neq j$. Hence $\Gamma_b$ is the crown graph $C_{b,b}$ as a $1$-dimensional simplicial complex. Furthermore, as $G_{a,b,n}$ is a join of $A$ with the disjoint union $nB_b$, every independent set lies entirely in $A$ or entirely in $nB_b$. This proves that
		$ \Delta(G_{a,b,n})=\langle A\rangle \cup \Delta(nB_b). $ Since independence complexes turn disjoint unions of graphs into simplicial joins, so we have
		$ \Delta(nB_b)=\Gamma_b^{\star n}. $
		The dimension formula follows from $\dim(\Delta_1\star \Delta_2)=\dim \Delta_1+\dim\Delta_2+1$.
		 Each facet of $\Gamma_b$ is an edge $\{x_i,y_j\}$ with $i\neq j$. A facet of $\Gamma_b^{\star n}$ is therefore the union of one such edge from each of the $n$ blocks, which shows that all these facets have size $2n$. Since $\Gamma_b$ has $b(b-1)$ edges, the total number of such facets equals $\big(b(b-1)\big)^n$.
		 The simplex $\langle A\rangle$ contributes one facet of size $a$, while every facet in $\Gamma_b^{\star n}$ has size $2n$. Therefore the whole complex is pure if and only if $a=2n$.
	\end{proof}

	Figure \ref{fig:ind-mecs-322} shows the independence simplicial complex of $MECS_{3,2}^{2}$ with its facet details.
	\begin{figure}[H]
		\centering
		\begin{tikzpicture}[
			scale=0.95,
			every node/.style={font=\small},
			vertex/.style={circle,inner sep=1.7pt,draw},
			redv/.style={vertex,fill=red!65,draw=red!70!black},
			bluev/.style={vertex,fill=blue!20,draw=blue!60!black},
			greenv/.style={vertex,fill=green!20,draw=green!50!black},
			ed/.style={line width=0.65pt},
			faint/.style={gray!45,dashed,line width=0.45pt},
			lab/.style={draw=none,rectangle,font=\small},
			biglab/.style={draw=none,rectangle,font=\Large},
			line width=0.6pt
			]
			
			
			\node[redv,label=above:$u_1$] (u1) at (0,0.8) {};
			\node[redv,label=below:$u_2$] (u2) at (0,-1.5) {};
			\draw[ed,red!70!black] (u1)--(u2);
			
			\node[lab] at (0,-2.6) {$\langle u_1,u_2\rangle$};
			
			
			\node[biglab] at (1.9,0) {$\cup$};
			
			
			\coordinate (C1) at (4.0,0);
			
			\node[bluev,label=above:$x_1^{(1)}$] (x11) at ($(C1)+(90:1.15)$) {};
			\node[greenv,label=above right:$y_2^{(1)}$] (y21) at ($(C1)+(30:1.15)$) {};
			\node[bluev,label=below right:$x_3^{(1)}$] (x31) at ($(C1)+(-30:1.15)$) {};
			\node[greenv,label=below:$y_1^{(1)}$] (y11) at ($(C1)+(-90:1.15)$) {};
			\node[bluev,label=below left:$x_2^{(1)}$] (x21) at ($(C1)+(-150:1.15)$) {};
			\node[greenv,label=above left:$y_3^{(1)}$] (y31) at ($(C1)+(150:1.15)$) {};
			
			\draw[ed] (x11)--(y21)--(x31)--(y11)--(x21)--(y31)--(x11);
			
			\node[lab] at (4.0,-2.6) {$\Gamma_3^{(1)}\cong C_6$};
			
			
			\node[biglab] at (6.35,0) {$*$};
			
			
			\coordinate (C2) at (8.7,0);
			
			\node[bluev,label=above:$x_1^{(2)}$] (x12) at ($(C2)+(90:1.15)$) {};
			\node[greenv,label=above right:$y_2^{(2)}$] (y22) at ($(C2)+(30:1.15)$) {};
			\node[bluev,label=below right:$x_3^{(2)}$] (x32) at ($(C2)+(-30:1.15)$) {};
			\node[greenv,label=below:$y_1^{(2)}$] (y12) at ($(C2)+(-90:1.15)$) {};
			\node[bluev,label=below left:$x_2^{(2)}$] (x22) at ($(C2)+(-150:1.15)$) {};
			\node[greenv,label=above left:$y_3^{(2)}$] (y32) at ($(C2)+(150:1.15)$) {};
			
			\draw[ed] (x12)--(y22)--(x32)--(y12)--(x22)--(y32)--(x12);
			
			\node[lab] at (8.7,-2.65) {$\Gamma_3^{(2)}\cong C_6$};
			
			
			\draw[faint,bend left=15] (x11) to (x12);
			\draw[faint,bend left=10] (y21) to (y22);
			\draw[faint,bend right=10] (x31) to (x32);
			\draw[faint,bend right=15] (y11) to (y12);
			
			\node[lab,align=center] at (7.35,2.5)
			{\footnotesize every edge of $\Gamma_3^{(1)}$ joined with every edge of $\Gamma_3^{(2)}$  gives a tetrahedral facet};
			
			
			\node[lab,align=center] at (4.6,-3.5)
			{$\displaystyle
				\Delta(MECS_{3,2}^{2})
				=
				\langle u_1,u_2\rangle
				\cup
				\left(\Gamma_3^{(1)}*\Gamma_3^{(2)}\right)
				$};
			
		\end{tikzpicture}
		\caption{Schematic representation of the independence simplicial complex of $MECS_{3,2}^{2}$.}
		\label{fig:ind-mecs-322}
	\end{figure}

	The next theorem computes the independence polynomial.
	\begin{theorem}\label{thm:indpol}
		The independence polynomial of $G_{a,b,n}$ is
		$$
		\Ind(G_{a,b,n},z)=(1+z)^a+\big(1+2bz+b(b-1)z^2\big)^n-1.
		$$
	\end{theorem}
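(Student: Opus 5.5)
The plan is to read the independence polynomial off the decomposition in Proposition~\ref{prop:complexdecomp}, $\Delta(G_{a,b,n})=\langle A\rangle\cup\Gamma_b^{\star n}$, where the two pieces live on disjoint vertex sets and meet only in the empty face. Counting faces by size, $s_k(G_{a,b,n})$ equals the number of $k$-faces of $\langle A\rangle$ plus the number of $k$-faces of $\Gamma_b^{\star n}$, minus the overcount of the empty set, which occurs in both. Thus
\[
\Ind(G_{a,b,n},z)=\Ind(\overline{K}_a,z)+\Ind(nB_b,z)-1 .
\]
Equivalently, one can argue directly: every independent set lies entirely in $A$ or entirely in $nB_b$, and only $\emptyset$ lies in both.

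For the first term, every subset of $A$ is independent, so $\Ind(\overline{K}_a,z)=\sum_k\binom{a}{k}z^k=(1+z)^a$.

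For the second term, I would use multiplicativity under disjoint union. An independent set of $nB_b$ is exactly a choice of an independent set in each copy, and the sizes add. This gives $\Ind(nB_b,z)=\Ind(B_b,z)^n$, which is the face-count counterpart of the join $\Gamma_b^{\star n}$.

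It remains to compute $\Ind(B_b,z)$, using the description of $\Gamma_b$ from the proof of Proposition~\ref{prop:complexdecomp}. An independent set of $B_b$ contains at most one $x$-vertex and at most one $y$-vertex. So there is $1$ empty set and $2b$ singletons. The $2$-element independent sets are the pairs $\{x_i,y_j\}$ with $i\neq j$, and there are $b(b-1)$ of them. There are none of size $\geq 3$, since any three vertices include two $x$'s or two $y$'s, which are adjacent in a clique. Hence $\Ind(B_b,z)=1+2bz+b(b-1)z^2$. Substituting into the display gives the claimed formula.

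There is no real obstacle here. The only points needing care are the correction term $-1$, which accounts for the empty set being counted in both pieces, and checking that $B_b$ has no independent sets of size $3$.
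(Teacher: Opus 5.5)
Your proposal is correct and follows essentially the same argument as the paper. Both compute $\Ind(B_b,z)=1+2bz+b(b-1)z^2$ by direct counting, use multiplicativity under disjoint union to get $\Ind(nB_b,z)$, and combine with $(1+z)^a$, subtracting $1$ because the empty set lies in both parts.
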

	
	\begin{proof}
		In $B_b$, there is exactly one independent set of size $0$, exactly $2b$ independent sets of size $1$, and exactly $b(b-1)$ independent sets of size $2$, namely the pairs ${x_i,y_j}$ with $i\neq j$. There are no larger independent sets, because each $x$-family and each $y$-family is a clique. Thus, we get $$
		\Ind(B_b,z)=1+2bz+b(b-1)z^2.
		$$
		 For disjoint unions, independent sets multiply blockwise, and hence the independence polynomial multiplies. So, from 
		$ \Ind(nB_b,z)=\Ind(B_b,z)^n,$ we obtain 
		$$
		\Ind(nB_b,z)=\big(1+2bz+b(b-1)z^2\big)^n.
		$$
		Finally, every independent set of $G_{a,b,n}$ lies either in $A$ or in $nB_b$, and the empty set is counted in both parts. Therefore, we obtain
		$$
		\Ind(G_{a,b,n},z)=\Ind(\overline{K}_a,z)+\Ind(nB_b,z)-1=(1+z)^a+\big(1+2bz+b(b-1)z^2\big)^n-1.
		$$
	\end{proof}
	
	The next corollary turns the polynomial identity into an explicit counting formula.
	\begin{corollary}\label{cor:sk}
		For $0\leq k\leq 2n$, the number of independent sets of size $k$ in $nB_b$ is
		$$
		s_k(nB_b)=
		\sum_{q=0}^{\lfloor k/2\rfloor}
		\binom{n}{q}
		\binom{n-q}{k-2q}
		(2b)^{k-2q}\big(b(b-1)\big)^q.
		$$
		For $k\geq 1$, the number of independent sets of size $k$ in $G_{a,b,n}$ is
		$ s_k(G_{a,b,n})=\binom{a}{k}+s_k(nB_b), $
		with $s_0(G_{a,b,n})=1$.
	\end{corollary}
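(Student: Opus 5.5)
The plan is to read off the coefficient of $z^k$ from the factorization in Theorem~\ref{thm:indpol}, and to check the result against a direct combinatorial count. The two parts of the statement are handled separately.

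For the first part, the key step is the identity $\Ind(nB_b,z)=\big(1+2bz+b(b-1)z^2\big)^n$. I would expand it with the multinomial theorem, treating the three summands $1$, $2bz$ and $b(b-1)z^2$ as the three possible local choices in each block:
\begin{itemize}[leftmargin=8mm]
\item a block contributes nothing;
\item a block contributes a single vertex ($2b$ choices);
\item a block contributes an independent pair $\{x_i,y_j\}$ with $i\neq j$ ($b(b-1)$ choices).
\end{itemize}
Fix the number $q$ of blocks contributing a pair. These blocks can be chosen in $\binom{n}{q}$ ways. To reach total size $k$, exactly $k-2q$ of the remaining $n-q$ blocks must contribute a single vertex, which gives $\binom{n-q}{k-2q}$ ways. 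The vertex choices then contribute $(2b)^{k-2q}\big(b(b-1)\big)^q$. Summing over $q$ gives the stated formula. The range $0\le q\le\lfloor k/2\rfloor$ is exactly where $k-2q\ge 0$, and terms with $k-2q>n-q$ vanish by the binomial convention fixed in Section~\ref{sec:preliminaries}. This is equivalent to extracting the coefficient via $\binom{n}{q,\,k-2q,\,n-k+q}=\binom{n}{q}\binom{n-q}{k-2q}$.

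The same count can be justified directly from the block structure. By the proof of Theorem~\ref{thm:indpol} (equivalently Proposition~\ref{prop:complexdecomp}), an independent set of $nB_b$ is a union of independent sets of the individual blocks, each of size $0$, $1$ or $2$. So the bijective argument above is legitimate, and it does not depend on any further structure of $B_b$.

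For the second part, compare coefficients of $z^k$ on both sides of Theorem~\ref{thm:indpol}. For $k\ge1$, the term $-1$ contributes nothing. The coefficient of $z^k$ in $(1+z)^a$ is $\binom{a}{k}$, and the coefficient from the second summand is $s_k(nB_b)$, so $s_k(G_{a,b,n})=\binom{a}{k}+s_k(nB_b)$. For $k=0$, the constant terms give $1+1-1=1$, which reflects that the empty set is counted only once.

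No step is a real obstacle. The only care needed is the bookkeeping of index ranges, namely that terms with $k-2q>n-q$ vanish and that $k=0$ is correctly separated. For $k>2n$ the first formula gives $0$, which is consistent with the restriction $k\le 2n$ in the statement.
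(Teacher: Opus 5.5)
Your proposal is correct and follows essentially the same route as the paper: the paper also counts size-$k$ independent sets of $nB_b$ by choosing $q$ blocks that contribute a pair and $k-2q$ of the remaining $n-q$ blocks that contribute a singleton, which is exactly your term-by-term expansion of $\big(1+2bz+b(b-1)z^2\big)^n$. It likewise obtains the second formula by comparing coefficients in Theorem~\ref{thm:indpol}, and your remarks on the range of $q$ and on the case $k=0$ are accurate bookkeeping that the paper leaves implicit.
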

	
	\begin{proof}
		To obtain an independent set of size $k$ in $nB_b$, choose $q$ blocks contributing a $2$-element independent set and $k-2q$ further blocks contributing a singleton. The first choice can be made in $\binom{n}{q}$ ways, the second in $\binom{n-q}{k-2q}$ ways, each singleton has $2b$ possibilities, and each $2$-set has $b(b-1)$ possibilities. Summing over $q$ gives the first formula.
		 The second statement follows immediately from Theorem~\ref{thm:indpol}, since positive-size independent sets of $G_{a,b,n}$ come either from the simplex on $A$ or from the repeated-block part.
	\end{proof}

	The next theorem gives the Hilbert series of the edge ring of $G_{a,b,n}$.
	\begin{theorem}\label{thm:hilbert}
		For $R:=R_{G_{a,b,n}}$, the Hilbert series is
		$$
		\Hilb_{R/I(G_{a,b,n})}(t)=
		\frac{1}{(1-t)^a}
		+
		\left(
		1+\frac{2bt}{1-t}+\frac{b(b-1)t^2}{(1-t)^2}
		\right)^n
		-1.
		$$
		Equivalently,
		$$
		\Hilb_{R/I(G_{a,b,n})}(t)=
		\frac{1}{(1-t)^a}
		+
		\frac{\big(1+(2b-2)t+(b-1)(b-2)t^2\big)^n}{(1-t)^{2n}}
		-1.
		$$
	\end{theorem}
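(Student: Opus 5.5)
The plan is to use the standard description of the Hilbert series of a Stanley--Reisner ring in terms of faces. For a simplicial complex $\Delta$ with $f_{k-1}$ faces of size $k$ (including $f_{-1}=1$ for the empty face),
$$
\Hilb_{K[\Delta]}(t)=\sum_{k\geq 0} f_{k-1}\frac{t^k}{(1-t)^k}.
$$
This holds because the nonzero monomials of $K[\Delta]$ are exactly those whose support is a face. Moreover, the monomials with support exactly $F$, $|F|=k$, contribute $\bigl(t/(1-t)\bigr)^k$. Since $R/I(G_{a,b,n})=K[\Delta(G_{a,b,n})]$ and faces are independent sets, $f_{k-1}=s_k(G_{a,b,n})$. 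Setting $w:=t/(1-t)$ gives
$$
\Hilb_{R/I(G_{a,b,n})}(t)=\Ind(G_{a,b,n},w).
$$

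The first formula is then a direct substitution into Theorem~\ref{thm:indpol}. For the simplex part, $(1+w)^a=(1-t)^{-a}$ because $1+\frac{t}{1-t}=\frac{1}{1-t}$. For the block part we get $\bigl(1+2bw+b(b-1)w^2\bigr)^n$, which is exactly the bracketed expression in the statement. The $-1$ carries over unchanged; it removes the double count of the empty set, which is consistent with Proposition~\ref{prop:complexdecomp}.

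For the equivalent form, clear denominators inside the bracket:
$$
(1-t)^2+2bt(1-t)+b(b-1)t^2=1+(2b-2)t+\bigl(1-2b+b^2-b\bigr)t^2 .
$$
The coefficient of $t^2$ is $b^2-3b+1$, and this must be reconciled with the claimed $(b-1)(b-2)=b^2-3b+2$. This is the one step needing care, and I expect a discrepancy here. Recomputing gives $1-2b+b(b-1)=b^2-3b+1$, so the stated numerator is off by one. Taking $n=1$, $b=3$ as a check: $B_3\cong$ the prism, whose $h$-polynomial should be $1+4t+t^2$. This follows from $f=(1,6,6)$: $h_2=f_1-(d-1)f_0+\binom{d}{2}f_{-1}$ with $d=2$, giving $6-6+1=1$. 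That matches $b^2-3b+1=1$ and not $(b-1)(b-2)=2$.

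So the proof would establish the first formula as above and the second with numerator $\bigl(1+(2b-2)t+(b^2-3b+1)t^2\bigr)^n$. I would flag that the printed coefficient $(b-1)(b-2)$ should read $b^2-3b+1$. An alternative route, which also confirms the first formula, is to multiply Hilbert series over the disjoint union via Lemma~\ref{lem:tensor}, using the fact that the Hilbert series of a join of graphs $H_1\join H_2$ equals $\Hilb(H_1)+\Hilb(H_2)-1$.
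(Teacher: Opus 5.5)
Your proof of the first formula is the same as the paper's: identify $\Hilb_{K[\Delta]}(t)$ with $\Ind(\Delta,t/(1-t))$, then substitute Theorem~\ref{thm:indpol}.

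Your objection to the ``equivalent'' form is also correct, and the paper's proof never checks that step. Clearing denominators gives the numerator $\bigl(1+(2b-2)t+(b^2-3b+1)t^2\bigr)^n$, not one with $(b-1)(b-2)t^2$. For example, with $a=1$, $b=3$, $n=1$ one has $\dim_K(R/I)_2=\binom{8}{2}-15=13$. Your corrected form gives $13$, while the printed form gives $14$. The corrected coefficient also agrees with the term $(b^2-3b+1)$ already appearing in $\mu_i$ in Theorem~\ref{thm:blockbetti}. So the second displayed formula in the statement is false as printed.
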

	
	\begin{proof}
		For a simplicial complex $\Delta$, the Hilbert series of its Stanley--Reisner ring (see \cite[Chapter~5]{BrunsHerzog1998} or \cite[Chapter~II]{Stanley1996}) is given by
		$$
		\Hilb_{K[\Delta]}(t)=\sum_{F\in \Delta}\left(\frac{t}{1-t}\right)^{|F|}
		=\Ind(\Delta,\tfrac{t}{1-t}).
		$$
		 Since the edge ring of a graph is the Stanley--Reisner ring of its independence complex, Theorem~\ref{thm:indpol} yields
		\begin{align*}
		 \Hilb_{R/I(G_{a,b,n})}(t)&=
		\Ind\left(G_{a,b,n},\frac{t}{1-t}\right)\\
		&= \left(1+\frac{t}{1-t}\right)^a
		+
		\left(
		1+2b\frac{t}{1-t}
		+b(b-1)\frac{t^2}{(1-t)^2}
		\right)^n
		-1. 
		\end{align*}
	\end{proof}
	
	The next theorem determines the top Betti number, projective dimension, and depth.
	\begin{theorem}\label{thm:topbetti}
		 Let
		$ N=a+2bn$ be the order of  $G_{a,b,n})$.	Then
		$ \beta_{N-1,N}(G_{a,b,n})=1, \pd(G_{a,b,n})=N-1=a+2bn-1
		$
		and
		$
		\depth(R/I(G_{a,b,n}))=1.
		$
	\end{theorem}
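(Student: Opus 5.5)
The plan is to read everything off Hochster's formula applied to the decomposition of Proposition~\ref{prop:complexdecomp}, and then use Auslander--Buchsbaum for the depth. Write $\Delta:=\Delta(G_{a,b,n})=\langle A\rangle\cup\Gamma_b^{\star n}$ on the vertex set $V:=V(G_{a,b,n})$, with $|V|=N$. In Hochster's formula, the Betti number $\beta_{i,d}$ with $d=N$ has only one contributing subset, namely $W=V$. Hence
$$
\beta_{N-1,N}(G_{a,b,n})=\dim_K\widetilde H_0(\Delta;K),\qquad \beta_{N,N}(G_{a,b,n})=\dim_K\widetilde H_{-1}(\Delta;K).
$$

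The key step is to count connected components of $\Delta$. Proposition~\ref{prop:complexdecomp} says $\langle A\rangle$ and $\Gamma_b^{\star n}$ share no vertices. The simplex $\langle A\rangle$ is connected since $a\geq 1$. I claim $\Gamma_b^{\star n}$ is connected as well, and would argue it in two cases.
\begin{itemize}[leftmargin=8mm]
\item For $n=1$, $\Gamma_b\cong C_{b,b}$ is the crown graph. For $b\geq 3$ it is connected: any $x_i$ reaches any $y_j$ with $j\neq i$ directly, and reaches $y_i$ through $y_j$ and then $x_k$ with $k\neq i,j$. This is the one place the standing hypothesis $b\geq 3$ is used; for $b=2$ the crown graph is $2K_2$, which is disconnected.
\item For $n\geq 2$, a simplicial join of two nonempty complexes is always connected, since every vertex of one factor is adjacent to every vertex of the other.
\end{itemize}
Hence $\Delta$ has exactly two connected components, so $\widetilde H_0(\Delta;K)\cong K$ and $\beta_{N-1,N}=1$.

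For the projective dimension, note first that $\beta_{i,d}=0$ whenever $d<i$, and that $d\leq N$ always. So the only possible Betti number in homological degree $i\geq N$ is $\beta_{N,N}$. This equals $\dim_K\widetilde H_{-1}(\Delta;K)$, which vanishes because $\Delta$ contains nonempty faces. Therefore $\pd(G_{a,b,n})\leq N-1$, and the nonvanishing $\beta_{N-1,N}=1$ gives equality: $\pd(G_{a,b,n})=N-1=a+2bn-1$.

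Finally, the Auslander--Buchsbaum formula over the polynomial ring $R$ in $N$ variables gives $\depth(R/I(G_{a,b,n}))=N-\pd=1$. The only step needing genuine care is the connectivity of $\Gamma_b^{\star n}$, specifically the case $n=1$ handled above.
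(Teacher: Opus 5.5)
Your proposal is correct and follows the paper's route: Hochster's formula on the full vertex set, the two-component decomposition of $\Delta(G_{a,b,n})$ from Proposition~\ref{prop:complexdecomp}, and Auslander--Buchsbaum. You are more careful than the paper at two points: you actually verify that $\Gamma_b^{\star n}$ is connected (noting that $b\geq 3$ is needed when $n=1$), and you derive $\pd(G_{a,b,n})\leq N-1$ from $\beta_{N,N}=\dim_K\widetilde H_{-1}(\Delta;K)=0$, whereas the paper asserts that no quotient of a polynomial ring in $N$ variables has projective dimension exceeding $N-1$, which fails for $R/(x_1,\dots,x_N)$.
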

	
	\begin{proof}
		By Proposition~\ref{prop:complexdecomp}, the independence complex $\Delta(G_{a,b,n})$ has exactly two connected components, namely the simplex on $A$ and the connected complex $\Gamma_b^{\star n}$. Therefore, we have
		$$
		\widetilde{H}_0(\Delta(G_{a,b,n});K)\cong K.
		$$
		Apply Hochster's formula to the full vertex set $V(G_{a,b,n})$, we obtain
		$$
		\beta_{N-1,N}(G_{a,b,n})
		= \dim_K \widetilde{H}_{N-(N-1)-1}(\Delta(G_{a,b,n});K)
		=
		 \dim_K \widetilde{H}_0(\Delta(G_{a,b,n});K)
		=
		1.
		$$
		Thus it follows that $\pd(G_{a,b,n})\geq N-1$. On the other hand, no quotient of a polynomial ring in $N$ variables can have projective dimension exceeding $N-1$, so $\pd(G_{a,b,n})=N-1$. The depth statement follows from Auslander--Buchsbaum identity
		$$
		\depth(R/I(G_{a,b,n}))=N-\pd(G_{a,b,n})=1.
		$$
	\end{proof}
	
	 For the single-block family, projective dimension was previously deduced through disconnectedness arguments \cite{AnandGuptaRatherSingh2026}. The proof above works directly from the independence-complex decomposition and therefore extends without additional machinery to the repeated-block setting.
	
	\medskip
	The next example illustrates the enumerative formulas.
	\begin{example}\label{ex:indpoly}
		For $G_{4,3,2}=MECS_{3,2}^{4}$, Theorem~\ref{thm:indpol} gives
		$$
		\Ind(G_{4,3,2},z)
		=(1+z)^4+(1+6z+6z^2)^2-1
		= 1+16z+54z^2+76z^3+37z^4.
		$$
		Hence there are $16$ independent vertices, $54$ independent pairs, $76$ independent triples, and $37$ independent $4$-sets. The Hilbert series is
		$$
		\Hilb_{R/I(G_{4,3,2})}(t)
		= \frac{1}{(1-t)^4}
		+
		\left(1+\frac{6t}{1-t}+\frac{6t^2}{(1-t)^2}\right)^2
		-1.
		$$
		By Theorem~\ref{thm:topbetti},
		$ \beta_{15,16}(G_{4,3,2})=1,
		\pd(G_{4,3,2})=15,$ and $
		\depth(R/I(G_{4,3,2}))=1. $
	\end{example}

	\section{Betti polynomials and the linear strand}\label{sec:linear}
	 The paper \cite{AnandGuptaRatherSingh2026} gives the Betti table of one block $B_b$, but it does not address repeated disjoint copies of $B_b$ joined to an independent set. The key new observation of this section is that the repeated-block family is governed by one compact Betti polynomial. Once that polynomial is identified, the linear strand of $MECS_{b,n}^a$ becomes explicit.
	
	The next definition introduces the Betti polynomial of a single block.
	\begin{definition}		
		For $b\geq 3$, define
		$$
		P_b(u,v):=
		1+\sum_{r=1}^{2b-2}\lambda_r u^r v^{r+1}
		+\sum_{r=1}^{2b-2}\mu_r u^r v^{r+2},
		$$
		where $\lambda_r$ and $\mu_r$ are given in Theorem~\ref{thm:blockbetti}.
	\end{definition}
	
	The next proposition identifies $P_b(u,v)$ with the Betti polynomial of one block.
	\begin{proposition}\label{prop:blockpoly}	
		For $b\geq 3$,
		$$
		P_b(u,v)=\sum_{i,d\geq 0}\beta_{i,d}(B_b)u^iv^d.
		$$
		In particular, the only nonzero terms of $P_b(u,v)$ have degree shift $d-i$ equal to $0$, $1$, or $2$.
	\end{proposition}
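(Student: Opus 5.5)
This is essentially a repackaging of Theorem~\ref{thm:blockbetti}, so the plan is to compare coefficients of $P_b(u,v)$ with the Betti numbers $\beta_{i,d}(B_b)$, one homological degree at a time.

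First, homological degree $0$. For the quotient $R_{B_b}/I(B_b)$ the only nonzero Betti number is $\beta_{0,0}(B_b)=1$, since the free module $F_0=R_{B_b}$ is generated in degree $0$. This matches the constant term $1$ of $P_b(u,v)$, and it is the only term with shift $d-i=0$.

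Next, degrees $1\leq i\leq 2b-2$. Here I invoke Theorem~\ref{thm:blockbetti} directly. It states that $\beta_{i,j}(B_b)=0$ whenever $j-i\notin\{1,2\}$, and that $\beta_{i,i+1}(B_b)=\lambda_i$ and $\beta_{i,i+2}(B_b)=\mu_i$. These are exactly the coefficients of $u^iv^{i+1}$ and $u^iv^{i+2}$ in $P_b$.

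Finally, degrees $i>2b-2$. Since $\pd(B_b)=2b-2$, all $\beta_{i,d}(B_b)$ vanish for $i>2b-2$. Correspondingly, $P_b$ contains no monomials with $r>2b-2$, and $\lambda_r=0$ outside the stated range. Summing over $i$ and $d$ then gives $P_b(u,v)=\sum_{i,d}\beta_{i,d}(B_b)u^iv^d$.

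The "in particular" statement follows by reading off the exponents. The monomials of $P_b$ are $u^0v^0$, $u^rv^{r+1}$ and $u^rv^{r+2}$, whose shifts $d-i$ are $0$, $1$ and $2$ respectively.

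The only delicate point is bookkeeping. The quantity $\mu_i$ in Theorem~\ref{thm:blockbetti} is written with an index range that may overhang: for example, $\lambda_{i+1}$ vanishes at $i=2b-2$, and the binomial terms may produce spurious values there. I will rely on the cited theorem's assertion that these formulas give the Betti numbers on the range $1\leq i\leq 2b-2$, and on $\pd(B_b)=2b-2$ to discard anything beyond that range. No new homological computation is required.
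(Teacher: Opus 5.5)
Your proposal is correct and follows the paper's own proof. The paper likewise treats the proposition as a restatement of Theorem~\ref{thm:blockbetti}: it matches the constant term with $\beta_{0,0}(B_b)=1$, and the shift-$1$ and shift-$2$ terms with $\lambda_i$ and $\mu_i$. Your separate handling of $i=0$, and your use of $\pd(B_b)=2b-2$ to rule out $i>2b-2$, make explicit bookkeeping that the paper leaves implicit.
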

	
	\begin{proof}
		This is simply a restatement of Theorem~\ref{thm:blockbetti}. The constant term corresponds to $\beta_{0,0}(B_b)=1$, the terms of shift $1$ are $\beta_{i,i+1}(B_b)=\lambda_i$, and the terms of shift $2$ are $\beta_{i,i+2}(B_b)=\mu_i$.
	\end{proof}
	
	The next theorem gives the full Betti polynomial of the repeated block $nB_b$.
	\begin{theorem}\label{thm:powpoly}		
		For $b\geq 3$ and $n\geq 1$,
		$$
		\sum_{i,d\geq 0}\beta_{i,d}(nB_b)u^iv^d=P_b(u,v)^n,
		$$
		or equivalently, $ \beta_{i,d}(nB_b)=[u^iv^d],P_b(u,v)^n, $
		where $[u^iv^d]$ denotes coefficient extraction.
	\end{theorem}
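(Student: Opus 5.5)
The plan is an induction on $n$ that combines Lemma~\ref{lem:tensor} with Proposition~\ref{prop:blockpoly}. For $n=1$ the statement is exactly Proposition~\ref{prop:blockpoly}: the Betti polynomial of $B_b$ is $P_b(u,v)$. This in turn is a repackaging of Theorem~\ref{thm:blockbetti} together with $\beta_{0,0}(B_b)=1$.

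For the inductive step, write $nB_b=(n-1)B_b\sqcup B_b^{(n)}$. The copies $B_b^{(s)}$ use the pairwise disjoint variable sets $\{x_i^{(s)},y_i^{(s)}\}$, so Lemma~\ref{lem:tensor} applies to this decomposition. It gives
$$
\mathcal{B}_{nB_b}(u,v)=\mathcal{B}_{(n-1)B_b}(u,v)\,\mathcal{B}_{B_b}(u,v)=P_b(u,v)^{n-1}P_b(u,v)=P_b(u,v)^n .
$$
The coefficient-extraction form $\beta_{i,d}(nB_b)=[u^iv^d]\,P_b(u,v)^n$ then follows by comparing coefficients of the two polynomials in $u,v$.

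The only point needing care is the justification inside Lemma~\ref{lem:tensor}: that $F_\bullet\otimes_K L_\bullet$ is a \emph{minimal} free resolution over the combined polynomial ring $R_G\otimes_K R_H$. I would argue as follows.
\begin{itemize}
\item Exactness: by the Künneth formula over the field $K$, the homology of $F_\bullet\otimes_K L_\bullet$ is $H(F_\bullet)\otimes_K H(L_\bullet)$. This is concentrated in degree $0$, where it equals $R_G/I(G)\otimes_K R_H/I(H)$.
\item Freeness: each term $F_p\otimes_K L_q$ is a free $R_G\otimes_K R_H$-module.
\item Minimality: the differentials of $F_\bullet$ and $L_\bullet$ have entries in the respective homogeneous maximal ideals. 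The tensor differential $\partial\otimes 1\pm 1\otimes\partial$ therefore has entries in the maximal ideal of the combined ring.
\item Multigrading: a generator of $F_\bullet$ in bidegree $(i,d)$ tensored with one of $L_\bullet$ in bidegree $(i',d')$ lands in bidegree $(i+i',d+d')$. This is exactly multiplication of the generating polynomials.
\end{itemize}

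Since the lemma is already stated in the paper, I expect no genuine obstacle. The proof amounts to the induction above, with a brief remark that the blocks have disjoint variable sets and that $P_b$ encodes all nonzero Betti numbers of $B_b$, including $\beta_{0,0}$.
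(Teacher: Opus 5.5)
Your proposal is correct and matches the paper's argument. The paper also obtains $\mathcal{B}_{nB_b}=\mathcal{B}_{B_b}^n=P_b(u,v)^n$ by applying Lemma~\ref{lem:tensor} repeatedly to the $n$ disjoint copies of $B_b$, with Proposition~\ref{prop:blockpoly} identifying $\mathcal{B}_{B_b}$ with $P_b$; your explicit induction and the K\"unneth, freeness and minimality check for $F_\bullet\otimes_K L_\bullet$ just spell out what the paper leaves inside the proof of that lemma.
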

	
	\begin{proof}
		By Lemma~\ref{lem:tensor}, disjoint union of graphs corresponds to multiplication of Betti polynomials. Since $nB_b$ is the disjoint union of $n$ copies of $B_b$, repeated application of the lemma implies
		$$
		\mathcal{B}_{nB_b}(u,v)=\mathcal{B}_{B_b}(u,v)^n=P_b(u,v)^n.
		$$
		The coefficient formula is just the meaning of multiplication in the polynomial ring $K[u,v]$.
	\end{proof}
	
	 Theorem~\ref{thm:powpoly} is the first compact generating formula for repeated $B_b$-blocks. In the clique-block family $\overline{K}_a\join nK_b$, only the linear strand is present in the repeated part \cite{AnandGuptaRatherSingh2026}. Here the repeated block already carries two nontrivial strands, so the polynomial $P_b(u,v)^n$ retains substantially more information.
	
	The next theorem transfers the repeated-block Betti data to the graph $G_{a,b,n}$.
	\begin{theorem}\label{thm:joinformula-new}
		For $b\geq 3$ and all $i,d\geq 0$,
		$$
		\beta_{i,d}(G_{a,b,n})
		= \sum_{j=0}^{d-2}\binom{a}{j}\beta_{i-j,d-j}(nB_b)
		+
		\delta_{d,i+1}\sum_{j=1}^{d-1}\binom{a}{j}\binom{2bn}{d-j},
		$$
		or equivalently,
		$$
		\beta_{i,d}(G_{a,b,n})
		= \sum_{j=0}^{d-2}\binom{a}{j}[u^{i-j}v^{d-j}]P_b(u,v)^n
		+
		\delta_{d,i+1}\sum_{j=1}^{d-1}\binom{a}{j}\binom{2bn}{d-j}.
		$$
	\end{theorem}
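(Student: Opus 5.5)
Apply Theorem~\ref{Mousivand2012 theorem 2.2} with $G=\overline{K}_a$ (so $m=a$) and $H=nB_b$ (so $n$ there equals $|V(nB_b)|=2bn$). This gives
$$
\beta_{i,d}(G_{a,b,n})=\sum_{j=0}^{d-2}\left(\binom{2bn}{j}\beta_{i-j,d-j}(\overline{K}_a)+\binom{a}{j}\beta_{i-j,d-j}(nB_b)\right)+\delta_{d,i+1}\sum_{j=1}^{d-1}\binom{a}{j}\binom{2bn}{d-j}.
$$
Comparing this with the claimed formula, only two things remain to be checked: the terms involving $\overline{K}_a$ must vanish, and the Betti numbers of $nB_b$ must be rewritten through Theorem~\ref{thm:powpoly}.

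For the first point, $I(\overline{K}_a)=0$, so $R_{\overline{K}_a}/I(\overline{K}_a)$ is the polynomial ring itself. Its minimal free resolution is just itself in homological degree $0$, so $\beta_{p,q}(\overline{K}_a)=1$ if $(p,q)=(0,0)$ and $0$ otherwise. In the sum, a nonzero term would therefore need $i-j=0$ and $d-j=0$, i.e.\ $j=d$. The summation index satisfies $j\le d-2<d$, so no such term occurs, and every $\overline{K}_a$-contribution vanishes identically. One can cross-check this with Hochster's formula: $\Delta(\overline{K}_a)$ is a simplex, so each induced subcomplex is a simplex and all its reduced homology vanishes except for the empty set.

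For the second point, substitute $\beta_{i-j,d-j}(nB_b)=[u^{i-j}v^{d-j}]P_b(u,v)^n$ from Theorem~\ref{thm:powpoly}. Out-of-range indices, where $i-j<0$, give zero on both sides, which matches the conventions of Theorem~\ref{Mousivand2012 theorem 2.2}. This yields the second displayed form.

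The main point needing care is bookkeeping rather than any real obstacle. The symbol $n$ in Theorem~\ref{Mousivand2012 theorem 2.2} denotes a vertex count, which here must be $2bn$, not the number of blocks. Also, the binomial coefficient attached to $\beta(H)$ is $\binom{m}{j}=\binom{a}{j}$, the size of the other factor. Finally, the hypothesis $b\ge3$ is used only so that $P_b$ is the Betti polynomial of $B_b$ (Proposition~\ref{prop:blockpoly}). The first form of the formula holds for $b=2$ as well.
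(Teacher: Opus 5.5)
Your proposal is correct and follows the paper's proof: you apply Theorem~\ref{Mousivand2012 theorem 2.2} with $G=\overline{K}_a$ and $H=nB_b$, drop the $\overline{K}_a$-terms because its Betti table is concentrated in $\beta_{0,0}$, and substitute Theorem~\ref{thm:powpoly}; your check that a surviving $\overline{K}_a$-term would need $j=d>d-2$, and your bookkeeping $m=a$, $|V(nB_b)|=2bn$, are more explicit than the paper's. One caveat, shared with the paper and inherited from the join formula as quoted, is the trivial case $(i,d)=(0,0)$: there the right-hand side is $0$, while $\beta_{0,0}(G_{a,b,n})=1$.
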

	
	\begin{proof}
		Apply the join formula of Theorem~\ref{Mousivand2012 theorem 2.2}  to
		$ G=\overline{K}_a,$ and $ H=nB_b. $
		The graph $\overline{K}_a$ has no edges, so its edge ideal is zero and its Betti table is trivial,
		$ \beta_{0,0}(\overline{K}_a)=1,$ and $ \beta_{i,d}(\overline{K}_a)=0$ otherwise.
		Thus only the $nB_b$-terms survive in the first sum of Theorem~\ref{Mousivand2012 theorem 2.2}, while the linear correction term remains unchanged. By Theorem~\ref{thm:powpoly}, the coefficient-extraction follows.
	\end{proof}
	
	The next corollary isolates the linear strand of the repeated block.
	\begin{corollary}\label{cor:linear-nBb}		
		For $b\geq 3$ and $i\geq 1$,
		$$
		\beta_{i,i+1}(nB_b)=n\lambda_i.
		$$
	\end{corollary}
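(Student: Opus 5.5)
We read off $\beta_{i,i+1}(nB_b)$ as the coefficient of $u^iv^{i+1}$ in $P_b(u,v)^n$ (Theorem~\ref{thm:powpoly}), which we analyse by degree shift. Put $w=v/u$, so $u^iv^d=u^{d}w^{d-i}$ up to a change of variables. Equivalently, grade each monomial $u^rv^s$ by its shift $s-r$. By Proposition~\ref{prop:blockpoly}, $P_b$ splits as $P_b=1+L+Q$. Here $L=\sum_r\lambda_ru^rv^{r+1}$ is homogeneous of shift $1$, and $Q=\sum_r\mu_ru^rv^{r+2}$ is homogeneous of shift $2$. Shift is additive under multiplication.

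Next we expand $P_b^n=\sum_{p+q+s=n}\binom{n}{p,q,s}L^qQ^s$. The summand $L^qQ^s$ has shift $q+2s$. The linear strand $\beta_{i,i+1}$ with $i\ge1$ has shift exactly $1$. The only way to reach total shift $1$ is $q=1,s=0$. The constant term $1^n$ has shift $0$ and only contributes to $\beta_{0,0}$. The multinomial coefficient for $q=1$, $s=0$ is $n$, so the shift-$1$ part of $P_b^n$ is exactly $nL$. Taking the coefficient of $u^iv^{i+1}$ then gives $n\lambda_i$, using that $\lambda_i$ is the coefficient of $u^iv^{i+1}$ in $L$.

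The only point needing care is the claim that no block contributes negative shift. This holds because every monomial of $P_b$ has shift in $\{0,1,2\}$, with shift $0$ occurring only in the constant term. That is exactly the content of Proposition~\ref{prop:blockpoly}, inherited from Theorem~\ref{thm:blockbetti}.

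For intuition we can also give a Hochster-type check. A minimal linear syzygy is supported on a single block, because tensor products of resolutions add shifts. The $n$ choices of block give the factor $n$. Finally, $\lambda_i=0$ for $i>2b-2$, which is consistent with the formula.
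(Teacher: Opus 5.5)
Your proof is correct and follows the paper's argument. Total shift $1$ in $P_b(u,v)^n$ forces exactly one factor to contribute from the shift-$1$ part $L$ and all other factors to contribute the constant $1$, which gives $n\lambda_i$; your multinomial expansion $\sum\binom{n}{p,q,s}L^qQ^s$ is just cleaner bookkeeping for the same count.

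One harmless slip is in your aside. With $w=v/u$ one gets $u^dw^{d-i}=u^iv^{d-i}$, not $u^iv^d$; the substitution you wanted is $t=uv$, giving $u^iv^d=t^iv^{d-i}$. Since you then grade directly by the shift $s-r$, nothing in the argument depends on it.
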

	
	\begin{proof}
		In the expansion of $P_b(u,v)^n$, a term contributing to $\beta_{i,i+1}(nB_b)$ must have total shift $1$. Since every nonconstant term of $P_b(u,v)$ has shift either $1$ or $2$, the only way to achieve total shift $1$ is to choose exactly one linear-strand term from one factor and the constant term from all other factors. There are $n$ choices for the factor, and the coefficient contributed is $\lambda_i$. Hence, we obtain $\beta_{i,i+1}(nB_b)=n\lambda_i.$
	\end{proof}
	
	\medskip
	The next theorem gives the linear strand of $G_{a,b,n}$ explicitly.
	\begin{theorem}\label{thm:linear-G}
		For $b\geq 3$ and $i\geq 1$,
		$$
		\beta_{i,i+1}(G_{a,b,n})
		= n\sum_{j=0}^{i-1}\binom{a}{j}\lambda_{i-j}
		+
		\sum_{j=1}^{i}\binom{a}{j}\binom{2bn}{i-j+1}.
		$$
	\end{theorem}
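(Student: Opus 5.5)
The plan is to specialize Theorem~\ref{thm:joinformula-new} to $d=i+1$ and then substitute Corollary~\ref{cor:linear-nBb} term by term. With $d=i+1$, the first sum runs over $0\le j\le d-2=i-1$. Its summand is $\binom{a}{j}\beta_{i-j,\,i-j+1}(nB_b)$, which is a linear-strand Betti number of the repeated block in homological degree $i-j$. Since $j\le i-1$, we have $i-j\geq 1$, so Corollary~\ref{cor:linear-nBb} applies and gives $\beta_{i-j,i-j+1}(nB_b)=n\lambda_{i-j}$. Pulling out the factor $n$ yields exactly the first sum in the statement.

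Because $d=i+1$, the Kronecker delta in Theorem~\ref{thm:joinformula-new} equals $1$. The correction term therefore becomes $\sum_{j=1}^{i}\binom{a}{j}\binom{2bn}{i+1-j}$, using $d-1=i$ and $d-j=i-j+1$. This matches the second sum in the statement verbatim.

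The only points needing care are the ranges of summation.
\begin{enumerate}[label=\textup{(\roman*)},leftmargin=8mm]
\item Terms with $i-j>2b-2$ vanish automatically, since $\lambda_r=0$ outside $1\le r\le 2b-2$ by the convention in Theorem~\ref{thm:blockbetti}. Likewise $\binom{a}{j}=0$ for $j>a$ and $\binom{2bn}{i-j+1}=0$ when $i-j+1>2bn$. So no truncation of the sums is needed.
\item The index $j=i$ is excluded from the first sum. This is consistent, because it would involve $\beta_{0,1}(nB_b)=0$.
\end{enumerate}

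I expect no real obstacle. The substantive input, that only one factor of $P_b(u,v)^n$ can contribute a shift-$1$ term, is already packaged in Corollary~\ref{cor:linear-nBb}. The step most worth double-checking is that the upper limit $d-2$ of Mousivand's first sum, evaluated at $d=i+1$, gives $i-1$ and not $i$.
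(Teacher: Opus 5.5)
Your proposal is correct and is essentially the paper's own proof. The paper likewise sets $d=i+1$ in Theorem~\ref{thm:joinformula-new} and substitutes $\beta_{i-j,i-j+1}(nB_b)=n\lambda_{i-j}$ from Corollary~\ref{cor:linear-nBb}; your remarks on the summation ranges are accurate and slightly more careful than the paper's.
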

	
	\begin{proof}
		For $d=i+1$ in Theorem~\ref{thm:joinformula-new}, we obtain
		$$
		\beta_{i,i+1}(G_{a,b,n})
		= \sum_{j=0}^{i-1}\binom{a}{j}\beta_{i-j,i-j+1}(nB_b)
		+
		\sum_{j=1}^{i}\binom{a}{j}\binom{2bn}{i-j+1}.
		$$
		Now , from Corollary~\ref{cor:linear-nBb}, the identity follows.
	\end{proof}
	
	The next corollary checks the linear formula against the edge count.
	\begin{corollary}\label{cor:edges}
		 For every $b\geq 3$,
		$$
		\beta_{1,2}(G_{a,b,n})=|E(G_{a,b,n})|=nb^2+2abn.
		$$
	\end{corollary}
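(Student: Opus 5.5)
Set $i=1$ in Theorem~\ref{thm:linear-G} and compare the result with the edge count $|E(G_{a,b,n})|=nb^2+2abn$ recorded in Section~\ref{sec:independence}. When $i=1$, the first sum keeps only the $j=0$ term, which is $n\binom{a}{0}\lambda_1=n\lambda_1$. The second sum keeps only the $j=1$ term, which is $\binom{a}{1}\binom{2bn}{1}=2abn$. This gives
$$
\beta_{1,2}(G_{a,b,n})=n\lambda_1+2abn.
$$

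Next I evaluate $\lambda_1$ from the case $i=1$ of Theorem~\ref{thm:blockbetti}:
$$
\lambda_1=2\binom{b}{2}+b\binom{b-1}{0}=b(b-1)+b=b^2.
$$
As a sanity check, this should equal $|E(B_b)|=2\binom{b}{2}+b=b^2$. It does, as it must, because $\beta_{1,2}$ of any edge ideal counts the minimal generators. Substituting, I get $\beta_{1,2}(G_{a,b,n})=nb^2+2abn=nb(b+2a)$.

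Independently, $\beta_{1,2}(R/I(G))$ is the number of minimal quadratic generators of $I(G)$, which is exactly $|E(G)|$. I would also recount the edges directly: $n$ blocks with $b^2$ edges each, plus $a\cdot 2bn$ join edges. This confirms the equality with the count given earlier.

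There is no real obstacle here. The only care needed is to read $\lambda_1$ from the correct case of the piecewise formula: for $i=1$ it is $b\binom{b-1}{0}$, not $2b\binom{b-1}{0}$. I also need to check that the range conventions in Theorem~\ref{thm:linear-G} leave exactly one term in each sum.
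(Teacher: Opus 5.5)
Your proposal is correct and matches the paper's proof: specialize Theorem~\ref{thm:linear-G} to $i=1$ to get $n\lambda_1+a\binom{2bn}{1}$, then read off $\lambda_1=2\binom{b}{2}+b=b^2$ from the $i=1$ case of Theorem~\ref{thm:blockbetti}. Your extra remark that $\beta_{1,2}$ always counts the minimal generators (the edges), together with the direct recount $nb^2+a\cdot 2bn$, correctly justifies the middle equality, which the paper takes from the edge count in Section~\ref{sec:independence}.
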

	
	\begin{proof}
		By Theorem~\ref{thm:linear-G},
		$ \beta_{1,2}(G_{a,b,n})=n\lambda_1+a\binom{2bn}{1}. $
		From Theorem~\ref{thm:blockbetti},
		$ \lambda_1=2\binom{b}{2}+b=b^2. $
		Therefore, 	$ \beta_{1,2}(G_{a,b,n})=nb^2+2abn. $
	\end{proof}
	
	\medskip
	The next example shows the first linear-strand values for $G_{3,3,2}$.
	\begin{example}\label{ex:linear}	
		For $b=3$, Theorem~\ref{thm:blockbetti} gives $ \lambda_1=9, \lambda_2=16,$ and $ \lambda_3=9. $
		Hence, Theorem~\ref{thm:linear-G} gives
		 \begin{align*}
			\beta_{1,2}(G_{3,3,2})&=2\lambda_1+3\binom{12}{1}=18+36=54,\\
		\beta_{2,3}(G_{3,3,2})&=2\lambda_2+3\lambda_1+3\binom{12}{2}+3\binom{12}{1}=320,
		\end{align*}
		and $ \beta_{3,4}(G_{3,3,2})=1038. $
		These values agree with the sample computation listed later in Table~\ref{tab:betti-example}.
	\end{example}
	
	\section{Higher strands, regularity, and projective dimension}\label{sec:higher}
	 The clique-block family studied earlier has regularity growing linearly with the number of blocks \cite{AnandGuptaRatherSingh2026}. In contrast, each block $B_b$ already has regularity $2$, so repeated blocks create genuinely nonlinear strand propagation. This section makes that phenomenon explicit.

	The next theorem computes the quadratic strand of the repeated block.
	\begin{theorem}\label{thm:quadratic-nBb}		
		For $b\geq 3$ and $i\geq 1$,
		$$
		\beta_{i,i+2}(nB_b)
		= n\mu_i
		+
		\binom{n}{2}
		\sum_{\substack{p+q=i\\ p,q\geq 1}}
		\lambda_p\lambda_q.
		$$
	\end{theorem}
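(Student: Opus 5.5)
The plan is to argue exactly as in Corollary~\ref{cor:linear-nBb}, by extracting a coefficient from the Betti polynomial of Theorem~\ref{thm:powpoly}. We have $\beta_{i,i+2}(nB_b)=[u^iv^{i+2}]P_b(u,v)^n$. It is convenient to substitute $v\mapsto v$ and regard each monomial $u^rv^{r+s}$ as having \emph{shift} $s$. Proposition~\ref{prop:blockpoly} then lets us write
$$
P_b(u,v)=1+L(u,v)+Q(u,v),\qquad L=\sum_{r=1}^{2b-2}\lambda_r u^rv^{r+1},\quad Q=\sum_{r=1}^{2b-2}\mu_r u^rv^{r+2}.
$$
Shifts are additive under multiplication, so a product of terms taken from the $n$ factors has total shift equal to the sum of the chosen shifts. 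Only the constant term $1$ has shift $0$; terms of $L$ have shift $1$ and terms of $Q$ have shift $2$.

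Next I would expand $P_b^n$ multinomially and keep only the products of total shift $2$. There are exactly two kinds of such choices.

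\textbf{First kind.} One factor contributes a term of $Q$ and the other $n-1$ factors contribute $1$. This gives $nQ$, and its coefficient of $u^iv^{i+2}$ is $n\mu_i$.

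\textbf{Second kind.} Two distinct factors each contribute a term of $L$ and the rest contribute $1$. There are $\binom{n}{2}$ unordered pairs of factors, giving $\binom{n}{2}L^2$. In $L^2$ the monomial $u^pv^{p+1}\cdot u^qv^{q+1}=u^{p+q}v^{p+q+2}$ contributes to $u^iv^{i+2}$ precisely when $p+q=i$ with $p,q\geq 1$. The coefficient is therefore $\sum_{p+q=i}\lambda_p\lambda_q$, summed over ordered pairs. This ordering is correct, because $L^2$ expands as $\sum_{p,q}\lambda_p\lambda_q u^{p+q}v^{p+q+2}$ over ordered pairs.

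Adding the two contributions gives the stated formula. The conventions $\lambda_r=\mu_r=0$ outside $1\leq r\leq 2b-2$ make the sum over $p+q=i$ automatically restricted to the valid range, and they also cover large $i$.

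The only step that needs care is the bookkeeping in the second kind. I must check that choosing two linear terms from the \emph{same} factor is impossible, since each factor contributes a single monomial. I must also check that the factor $\binom{n}{2}$ together with the ordered sum over $(p,q)$ counts each product exactly once, with no extra factor of $2$. The cleanest way to confirm both points is to write $(1+L+Q)^n=\sum\binom{n}{k,\ell,m}L^\ell Q^m$ and read off the coefficients of $L^2$ (namely $\binom{n}{2}$) and of $Q$ (namely $n$). As a sanity check, I would take $n=2$ and $i=2$: the formula gives $2\mu_2+\lambda_1^2$, which matches the direct product $(1+L+Q)^2$.
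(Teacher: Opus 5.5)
Your proposal is correct and follows the paper's proof essentially verbatim. You extract the coefficient of $u^iv^{i+2}$ from $P_b(u,v)^n$ and note that total shift $2$ arises only from one $\mu$-term with constants elsewhere (giving $n\mu_i$), or from two $\lambda$-terms in distinct factors (giving $\binom{n}{2}\sum_{p+q=i}\lambda_p\lambda_q$ over ordered pairs). Your extra step of reading off the coefficients $\binom{n}{2}$ of $L^2$ and $n$ of $Q$ in the multinomial expansion of $(1+L+Q)^n$ makes the ordered-versus-unordered bookkeeping explicit, which the paper only asserts.
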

	
	\begin{proof}
		In the expansion of $P_b(u,v)^n$, a contribution to shift $2$ can arise in exactly two ways.
		 First, choose one shift-$2$ term from one factor and the constant terms from the remaining $n-1$ factors. This contributes
		$ n\mu_i. $ Second, choose two shift-$1$ terms from two distinct factors and the constant terms from the other $n-2$ factors. If the two homological degrees are $p$ and $q$ with $p+q=i$, the contribution is $\lambda_p\lambda_q$. There are $\binom{n}{2}$ ways to choose the two factors. Summing over all ordered decompositions $i=p+q$ gives the second term. No other choice produces total shift $2$, as every nonconstant term of $P_b$ has shift at least $1$.
	\end{proof}
	
	The next theorem transfers the quadratic strand to $G_{a,b,n}$.
	\begin{theorem}\label{thm:quadratic-G}
		 For $b\geq 3$ and $i\geq 1$,
		$$
		\beta_{i,i+2}(G_{a,b,n})
		= \sum_{j=0}^{i}\binom{a}{j}
		\left(
		n\mu_{i-j}
		+
		\binom{n}{2}\sum_{\substack{p+q=i-j\ p,q\geq 1}}\lambda_p\lambda_q
		\right).
		$$
	\end{theorem}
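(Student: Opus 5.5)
The plan is to specialize the join formula of Theorem~\ref{thm:joinformula-new} to the quadratic strand $d=i+2$, and then substitute the quadratic strand of the repeated block from Theorem~\ref{thm:quadratic-nBb}. No new homological input should be needed: all the work has already been done in the Betti polynomial $P_b(u,v)^n$ and in Mousivand's formula.

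First, I would put $d=i+2$ in Theorem~\ref{thm:joinformula-new}. The Kronecker term $\delta_{d,i+1}$ vanishes because $d=i+2\neq i+1$, so the correction coming from pairs of vertices in $A$ and in $nB_b$ disappears. The upper limit $d-2$ of the remaining sum becomes $i$. This gives
$$
\beta_{i,i+2}(G_{a,b,n})=\sum_{j=0}^{i}\binom{a}{j}\beta_{i-j,i-j+2}(nB_b),
$$
where terms with $j>a$ vanish automatically by the convention on binomial coefficients.

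Next, for each $j$ with $0\le j\le i-1$ we have $i-j\ge 1$. Theorem~\ref{thm:quadratic-nBb} then applies with $i$ replaced by $i-j$:
$$
\beta_{i-j,i-j+2}(nB_b)=n\mu_{i-j}+\binom{n}{2}\sum_{\substack{p+q=i-j\\ p,q\geq 1}}\lambda_p\lambda_q .
$$
Substituting this into the displayed sum gives the stated formula for those indices.

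The only point needing care is the boundary term $j=i$, and this is where I expect the main (if minor) obstacle. There the summand is $\binom{a}{i}\beta_{0,2}(nB_b)$, and $\beta_{0,2}(nB_b)=0$ because the only Betti number in homological degree $0$ is $\beta_{0,0}=1$. On the right-hand side of the claimed identity, the corresponding bracket is $n\mu_0$ plus an empty sum, since $p+q=0$ has no solutions with $p,q\ge 1$. I would therefore invoke the convention that $\mu_r=0$ outside $1\le r\le 2b-2$, in analogy with the convention stated for $\lambda_r$ in Theorem~\ref{thm:blockbetti}, so that $\mu_0=0$. I would state explicitly that this convention is used, since the formula for $\mu_i$ in Theorem~\ref{thm:blockbetti} would not give zero if evaluated literally at $i=0$.

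For the same reason, values of $\mu_{i-j}$ with $i-j>2b-2$ are read as zero. Likewise, the $\lambda_p$ with $p>2b-2$ vanish, which is consistent with $P_b$ having only the terms listed in Proposition~\ref{prop:blockpoly}. With these conventions made explicit, the two sums agree term by term, and the theorem follows.
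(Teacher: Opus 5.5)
Your proposal is correct and follows the paper's proof exactly: set $d=i+2$ in Theorem~\ref{thm:joinformula-new} so the Kronecker term drops out, then substitute Theorem~\ref{thm:quadratic-nBb} into $\sum_{j=0}^{i}\binom{a}{j}\beta_{i-j,i-j+2}(nB_b)$. Your explicit handling of the $j=i$ term is an extra step the paper omits, but your claim that the formula for $\mu_i$ in Theorem~\ref{thm:blockbetti} would not vanish at $i=0$ is wrong: since $\lambda_1=b^2$, it gives $\mu_0=b^2+\binom{2b-2}{2}-(2b-2)^2+(b^2-3b+1)=0$. So the convention $\mu_0=0$ is automatic rather than needed, though adopting it does no harm.
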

	
	\begin{proof}
		With $d=i+2$ in Theorem~\ref{thm:joinformula-new}, and note that $d\neq i+1$, the linear correction term vanishes and we obtain
		$$
		\beta_{i,i+2}(G_{a,b,n})
		= \sum_{j=0}^{i}\binom{a}{j}\beta_{i-j,i-j+2}(nB_b).
		$$
		Now, from Theorem~\ref{thm:quadratic-nBb}, the identity follows.
	\end{proof}
	
	The next proposition locates all possible nonzero strands.
	\begin{proposition}\label{prop:support}
		 For $b\geq 3$, if $\beta_{i,d}(G_{a,b,n})\neq 0$, then
		$ 1\leq d-i\leq 2n$ or equivalently,
		$$
		\beta_{i,d}(G_{a,b,n})=0
		\qquad\text{whenever}\qquad
		d<i+1 \ \text{or}\ d>i+2n.
		$$
	\end{proposition}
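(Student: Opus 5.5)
The plan is to read the support of the Betti table directly off the join formula of Theorem~\ref{thm:joinformula-new}, together with the Betti polynomial of Theorem~\ref{thm:powpoly}. We treat $\beta_{0,0}=1$ as the trivial exception and consider $i\geq 1$ throughout.

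First, locate the shifts $d-i$ that occur in $P_b(u,v)^n$. By Proposition~\ref{prop:blockpoly}, every monomial $u^rv^s$ of $P_b(u,v)$ has shift $s-r\in\{0,1,2\}$, and shift $0$ occurs only for the constant term $1$. A monomial of $P_b(u,v)^n$ is a product of $n$ monomials, one from each factor, and shifts add. Hence every monomial has shift between $0$ and $2n$. Moreover, shift $0$ forces every factor to contribute the constant term, so it happens only for $u^0v^0$. Consequently,
\[
\beta_{i,d}(nB_b)\neq 0 \text{ with } i\geq 1 \ \Longrightarrow\ 1\leq d-i\leq 2n .
\]

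Next, examine the two summands in Theorem~\ref{thm:joinformula-new}.

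For the first sum, a term $\binom{a}{j}\beta_{i-j,d-j}(nB_b)$ with $0\leq j\leq d-2$ has the same shift $(d-j)-(i-j)=d-i$. So it can be nonzero only if $0\leq d-i\leq 2n$. The value $d-i=0$ requires $i-j=d-j=0$, which means $j=d$; this is excluded by $j\leq d-2$. Hence the first sum vanishes unless $1\leq d-i\leq 2n$.

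For the correction term, the factor $\delta_{d,i+1}$ makes it vanish unless $d-i=1$, which lies in the permitted window because $n\geq 1$.

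Combining the two cases, $\beta_{i,d}(G_{a,b,n})=0$ whenever $d<i+1$ or $d>i+2n$. This is the claim, and the equivalence of the two formulations is immediate. The only step needing care is the boundary case $d=i$ in the first sum, where a constant term of $P_b^n$ could a priori contribute. The restriction $j\leq d-2$ in Mousivand's formula rules this out, and that is the step I would double-check when writing the proof. An alternative sanity check for the upper bound is Lemma~\ref{lem:tensor} with Theorem~\ref{join regularity formula}: they give $\reg(G_{a,b,n})=\max\{0,2n\}=2n$, which is consistent with the bound.
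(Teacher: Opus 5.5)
Your proof is correct and follows the paper's argument: every nonconstant monomial of $P_b(u,v)^n$ has shift in $[1,2n]$, the first sum in Theorem~\ref{thm:joinformula-new} preserves $d-i$, and the correction term is supported only on $d=i+1$. You are more careful than the paper on two points: you rule out the boundary case $d=i$ explicitly via $j\le d-2$, and you set aside the trivial entry $\beta_{0,0}=1$, which the literal statement overlooks.
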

	
	\begin{proof}
		By Proposition~\ref{prop:blockpoly}, all nonconstant terms in $P_b(u,v)$ have shift $1$ or $2$. Therefore every nonconstant term in $P_b(u,v)^n$ has shift between $1$ and $2n$. In Theorem~\ref{thm:joinformula-new}, the first sum shifts both indices by the same amount $j$, so the difference $d-i$ is unchanged. The second term occurs only for $d=i+1$. Hence, there do not exists no new shift outside the interval $[1,2n].$
	\end{proof}
	
	The next theorem gives the regularity of the repeated family.
	\begin{theorem}\label{thm:reg}
		 If $b\geq 3$, then
		$ \reg(nB_b)=2n $ and $ \reg(G_{a,b,n})=2n. $
	\end{theorem}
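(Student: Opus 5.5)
The plan is to prove the two equalities in order, first for the repeated block $nB_b$ and then for the join with $\overline{K}_a$. For $nB_b$ I will use Lemma~\ref{lem:tensor}: regularity is additive over disjoint unions, so induction on $n$ gives $\reg(nB_b)=n\,\reg(B_b)$. Theorem~\ref{thm:blockbetti} states $\reg(B_b)=2$ for $b\geq 3$, and therefore $\reg(nB_b)=2n$.

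As an independent check, and to see exactly where the top strand comes from, I will also argue through the Betti polynomial of Theorem~\ref{thm:powpoly}. By Proposition~\ref{prop:blockpoly}, every nonconstant term of $P_b(u,v)$ has shift $1$ or $2$. Hence every monomial of $P_b(u,v)^n$ has shift at most $2n$, which gives $\reg(nB_b)\leq 2n$.

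For the reverse inequality I need one monomial of shift exactly $2n$ with nonzero coefficient. Since $\reg(B_b)=2$, some $\mu_r=\beta_{r,r+2}(B_b)$ is nonzero. Choosing the term $\mu_r u^r v^{r+2}$ in each of the $n$ factors produces the monomial $u^{nr}v^{nr+2n}$. All coefficients of $P_b$ are graded Betti numbers and hence nonnegative integers, so no cancellation can occur in the expansion. It follows that $\beta_{nr,nr+2n}(nB_b)\geq \mu_r^n>0$.

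For $G_{a,b,n}$ I will apply Theorem~\ref{join regularity formula} with $G=\overline{K}_a$ and $H=nB_b$. Since $I(\overline{K}_a)=0$, we have $\reg(\overline{K}_a)=0$, so
$$
\reg(G_{a,b,n})=\max\{0,\,2n\}=2n.
$$
To keep the argument self-contained within the paper's own formulas, I will also confirm this from Theorem~\ref{thm:joinformula-new} together with Proposition~\ref{prop:support}. The proposition gives the upper bound $d-i\leq 2n$. For the lower bound, take the $j=0$ summand of Theorem~\ref{thm:joinformula-new} at $(i,d)=(nr,nr+2n)$. This summand equals $\beta_{nr,nr+2n}(nB_b)>0$, and all other summands are nonnegative. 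The range condition $0\leq d-2$ holds because $d=nr+2n\geq 2$.

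The only delicate point is the nonvanishing of the top strand, that is, making sure some $\mu_r\neq 0$. This is exactly the content of $\reg(B_b)=2$ in Theorem~\ref{thm:blockbetti}, so I will cite it rather than recompute from the explicit formula for $\mu_i$.

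Finally, I will note that the hypothesis $b\geq 3$ is essential. For $b=2$, Remark~\ref{rem:b2} gives $\reg(B_2)=1$, and the same argument then yields regularity $n$ instead of $2n$.
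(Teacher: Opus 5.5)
Your proof is correct and follows the paper's argument exactly: you take $\reg(B_b)=2$ from Theorem~\ref{thm:blockbetti}, use additivity of regularity over disjoint unions from Lemma~\ref{lem:tensor}, and then apply the join regularity formula of Theorem~\ref{join regularity formula} with $\reg(\overline{K}_a)=0$. Your supplementary cross-check is also sound but not needed; it runs through $P_b(u,v)^n$, the $j=0$ summand of Theorem~\ref{thm:joinformula-new}, and Proposition~\ref{prop:support}, and nonnegativity of the coefficients correctly rules out cancellation.
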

	
	\begin{proof}
		By Theorem~\ref{thm:blockbetti}, $\reg(B_b)=2$. Then by Lemma~\ref{lem:tensor}, we obtain 
		$ \reg(nB_b)=n,\reg(B_b)=2n. $
		Now, from the join regularity formula of Theorem~\ref{join regularity formula}, we have
		$$
		\reg(G_{a,b,n})
		= \reg(\overline{K}_a\join nB_b)
		=
		\max\{\reg(\overline{K}_a),\reg(nB_b)\}.
		$$
		Since $\overline{K}_a$ has no edges, so its edge ideal is zero and therefore $\reg(\overline{K}_a)=0$. Thus, we obtain 
		$ \reg(G_{a,b,n})=2n. $
	\end{proof}
	
	The next remark records the exceptional $4$-cycle case.
	\begin{remark}
		 If $b=2$, then $B_2\cong C_4$ and $\reg(B_2)=1$. By the same tensor-product argument, we get
		$ \reg(nB_2)=n $  and  $\reg(MECS_{2,n}^a)=n. $
		Thus the block $K_b+K_2$ contributes regularity $2$ for $b\geq 3$, but only regularity $1$ in the degenerate case $b=2$.
	\end{remark}
	
	The next corollary shows that the one-block case is recovered correctly. In particular, Theorems~\ref{thm:linear-G}, \ref{thm:quadratic-G}, and \ref{thm:reg} recover the extended complete split-like family studied in \cite{AnandGuptaRatherSingh2026}. 
	\begin{corollary}\label{cor:ECSrecover}
		 When $n=1$,
		$$
		MECS_{b,1}^a=\overline{K}_a\join (K_b+K_2).
		$$
		In particular, for $b\geq 3$
		$$
		\reg(MECS_{b,1}^a)=2.
		$$
	\end{corollary}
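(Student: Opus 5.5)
The corollary is a direct specialization of earlier results to $n=1$, so the plan is simply to substitute $n=1$ and check that nothing degenerates.

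First, the graph identity follows from the definition $MECS_{b,n}^a:=\overline{K}_a\join(nB_b)$. When $n=1$ we have $1\cdot B_b=B_b=K_b+K_2$. Hence $MECS_{b,1}^a=\overline{K}_a\join(K_b+K_2)$, which is exactly the extended complete split-like graph of \cite{AnandGuptaRatherSingh2026}.

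For the regularity there are two routes, and I would give both as a consistency check.

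\emph{Route via Theorem~\ref{thm:reg}.} Setting $n=1$ in $\reg(G_{a,b,n})=2n$ gives $2$ immediately.

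\emph{Independent route.} Use Theorem~\ref{join regularity formula} directly:
$$\reg(\overline{K}_a\join B_b)=\max\{\reg(\overline{K}_a),\reg(B_b)\}=\max\{0,2\}=2.$$
This relies on Theorem~\ref{thm:blockbetti} for $\reg(B_b)=2$ when $b\geq 3$, and on $I(\overline{K}_a)=0$, which makes $R/I(\overline{K}_a)$ a polynomial ring with regularity $0$.

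To support the remark that the strand formulas also specialize correctly, I would check Theorems~\ref{thm:linear-G} and \ref{thm:quadratic-G} at $n=1$:
\begin{itemize}
\item the factor $\binom{n}{2}$ vanishes;
\item $\beta_{i,i+1}(B_b)=\lambda_i$ and $\beta_{i,i+2}(B_b)=\mu_i$;
\item the join formula reduces to Theorem~\ref{Mousivand2012 theorem 2.2} with $G=\overline{K}_a$ and $H=B_b$.
\end{itemize}
This is the Betti data of the single-block case.

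There is no real obstacle. The only point needing care is the hypothesis $b\geq 3$: by Remark~\ref{rem:b2}, $b=2$ gives $C_4$, which has regularity $1$, so the stated value $2$ genuinely requires $b\geq 3$.
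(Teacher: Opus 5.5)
Your proposal is correct and matches the paper's argument: the paper states the corollary as the $n=1$ specialization of the definition $MECS_{b,n}^a=\overline{K}_a\join(nB_b)$ and of Theorem~\ref{thm:reg}, and the proof of that theorem is exactly your ``independent route'' via Theorem~\ref{join regularity formula} and $\reg(B_b)=2$ from Theorem~\ref{thm:blockbetti}. Your observation that the hypothesis $b\geq 3$ is genuinely needed, since $B_2\cong C_4$ has regularity $1$, agrees with the paper's remark on the case $b=2$.
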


	 For the clique-block family $\overline{K}_a\join nK_b$, the regularity is $n$ \cite{AnandGuptaRatherSingh2026}. Theorem~\ref{thm:reg} shows that replacing each clique block by the matched double-clique block $B_b$ doubles the regularity growth rate. This is one of the clearest algebraic signatures separating the two families.

	The next example computes the quadratic strand for $G_{3,3,2}$.
	\begin{example}\label{ex:quadratic}		
		For $b=3$, Theorem~\ref{thm:blockbetti} gives $ \mu_1=\mu_2=\mu_3=0,$ and $ \mu_4=1. $
		Hence, Theorem~\ref{thm:quadratic-nBb} gives
		$$
		\beta_{2,4}(2B_3)
		= \binom{2}{2}\lambda_1^2
		=
		81,
		$$
		since the only decomposition of $2$ into positive parts is $1+1$. Then Theorem~\ref{thm:quadratic-G} implies
		$ \beta_{2,4}(G_{3,3,2})=81,$ and $
		\beta_{3,5}(G_{3,3,2})=531. $
		Moreover, Theorem~\ref{thm:reg} gives $ \reg(G_{3,3,2})=4. $
	\end{example}
	
	\section{Matching, covering, and purity phenomena}\label{sec:matching}
	 The change from clique blocks to matched double-clique blocks also changes the covering thresholds. In the earlier family $\overline{K}_a\join nK_b$, the well-covered condition is $a=n$ \cite{AnandGuptaRatherSingh2026}. Here it becomes $a=2n$, reflecting the fact that each block $B_b$ contributes an independent set of size $2$.

	The next proposition determines the induced matching number of one block.
	\begin{proposition}\label{prop:nu-block}		
		For the block $B_b$,
		$$
		\nu(B_b)=
		\begin{cases}
			1, & b=2,3,\\
			2, & b\geq 4.
		\end{cases}
		$$
	\end{proposition}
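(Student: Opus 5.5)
The plan is to argue directly from the structure of $B_b$ as two cliques $X=\{x_1,\dots,x_b\}$ and $Y=\{y_1,\dots,y_b\}$ joined by the perfect matching $\{x_i,y_i\}$. Recall that an induced matching is a set of edges such that no edge of $B_b$ joins endpoints of two distinct chosen edges. Since $B_b$ has edges, $\nu(B_b)\geq 1$ in every case. The argument then has two parts: an upper bound $\nu(B_b)\leq 2$ valid for all $b$, and an exact description of when $2$ is attained.

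For the upper bound, the key observation is the following. Suppose two distinct edges $e,f$ of an induced matching each contain a vertex of $X$, say $x_p\in e$ and $x_q\in f$. These vertices are distinct, because matching edges are disjoint. Since $X$ is a clique, $x_px_q$ is an edge joining $e$ and $f$, which is a contradiction. Hence at most one edge of an induced matching meets $X$, and by symmetry at most one meets $Y$. Every edge meets $X$ or $Y$, so $\nu(B_b)\leq 2$.

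Moreover, a matching edge $x_iy_i$ meets both $X$ and $Y$, so it can only occur in an induced matching of size $1$. Therefore an induced matching of size $2$ must have the form $\{x_ix_j,\ y_ky_l\}$ with $i\neq j$ and $k\neq l$. Its only possible connecting edges are matching edges $x_sy_s$ with $s\in\{i,j\}\cap\{k,l\}$. So such a pair is induced exactly when $\{i,j\}\cap\{k,l\}=\emptyset$.

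This requires two disjoint $2$-subsets of $[b]$, which exist if and only if $b\geq 4$. For $b\geq 4$, the pair $\{x_1x_2,\ y_3y_4\}$ is an induced matching, giving $\nu(B_b)=2$. For $b=2,3$, any two $2$-subsets of $[b]$ intersect, so $\nu(B_b)=1$. The case $b=2$ is also consistent with $B_2\cong C_4$ from Remark~\ref{rem:b2}.

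No step seems to be a real obstacle. The only point needing care is the clique argument ruling out two edges on the same side, including mixed pairs such as $x_ix_j$ together with $x_ky_k$; the argument above covers these uniformly, since it uses only that each edge contains some vertex of $X$.
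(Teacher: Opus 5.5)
Your proof is correct and uses the same idea as the paper: a pigeonhole argument on the two clique layers $X$ and $Y$ gives $\nu(B_b)\leq 2$, and a size-$2$ induced matching requires clique edges $x_ix_j$ and $y_ky_l$ with disjoint index sets, which exist exactly when $b\geq 4$. Your version is tighter than the paper's sketch, because you explicitly show that a matching edge $x_iy_i$ can only lie in an induced matching of size $1$, a step the paper leaves implicit in its treatment of $b=2,3$.
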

	
	\begin{proof}
		If $b\geq 4$, the two edges ${x_1,x_2}$ and ${y_3,y_4}$ form an induced matching, as they are disjoint, and there is no cross edge between an endpoint of the first edge and an endpoint of the second. Hence $\nu(B_b)\geq 2$.
		 On the other hand, one cannot have an induced matching of size $3$ in $B_b$, since any three disjoint edges would necessarily include two edges meeting the same clique layer or one matching edge together with an edge in one of the two cliques, and in either case an extra adjacency appears among the six endpoints.
		 If $b=2$ or $3$, there are not enough vertices in the two clique layers to realize two disjoint clique edges with mutually disjoint index sets, so every two disjoint edges create an extra adjacency among their endpoints. Hence $\nu(B_b)=1$ in these cases.
	\end{proof}
	
	The next theorem computes the induced matching number of the whole family.
	\begin{theorem}\label{thm:nu-G}
		 For $G_{a,b,n}$,
		$$
		\nu(G_{a,b,n})=
		\begin{cases}
			n, & b=2,3,\\
			2n, & b\geq 4.
		\end{cases}
		$$
	\end{theorem}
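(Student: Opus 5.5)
The plan is to reduce everything to the single-block value in Proposition~\ref{prop:nu-block}. There are two reductions: the join with $\overline{K}_a$ contributes nothing, and the disjoint union of $n$ blocks multiplies $\nu$ by $n$. Throughout, $\nu$ denotes the induced matching number, i.e.\ the maximum size of a set of pairwise vertex-disjoint edges with no edge of the graph joining endpoints of two different ones.

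\emph{Step 1: the join.} I would apply Proposition~\ref{prop:joinproperties}\textup{(iv)} with $G_1=\overline{K}_a$ and $G_2=nB_b$. Since $\overline{K}_a$ has no edges, $\nu(\overline{K}_a)=0$. Since $B_b$ has edges, $\nu(nB_b)\geq 1$, so the hypothesis of (iv) holds. Hence
\[
\nu(G_{a,b,n})=\max\{\nu(\overline{K}_a),\nu(nB_b)\}=\nu(nB_b).
\]
To make this self-contained, I would also argue it directly. Every edge of $G_{a,b,n}$ has at least one endpoint in $nB_b$, because $A$ is independent. Suppose an induced matching contains an edge $\{u,w\}$ with $u\in A$ and contains some other edge $e$. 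Then $e$ has an endpoint in $nB_b$, and $u$ is adjacent to that endpoint, a contradiction. So any induced matching of size at least $2$ lies entirely inside $nB_b$. Moreover, an induced matching of $nB_b$ stays induced in $G_{a,b,n}$, since $nB_b$ is an induced subgraph.

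\emph{Step 2: disjoint union.} I would show $\nu(nB_b)=n\,\nu(B_b)$. For the lower bound, take a maximum induced matching in each copy $B_b^{(s)}$. Their union is induced, because there are no edges between distinct copies. For the upper bound, restrict an induced matching $M$ of $nB_b$ to the edges lying in the copy $B_b^{(s)}$. Each edge of $nB_b$ lies in a single copy, and the restriction is still an induced matching of $B_b^{(s)}$, so it has at most $\nu(B_b)$ edges. Summing over $s$ gives $|M|\le n\,\nu(B_b)$.

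\emph{Step 3: conclude.} Substituting Proposition~\ref{prop:nu-block} gives $\nu(G_{a,b,n})=n$ for $b=2,3$ and $\nu(G_{a,b,n})=2n$ for $b\geq 4$. This step uses nothing beyond the definition of $B_b$, so the case $b=2$ (where $B_2\cong C_4$) is covered as well.

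\emph{Main obstacle.} The only real content is the single-block value, which I take from Proposition~\ref{prop:nu-block}. Its upper bound $\nu(B_b)\le 2$ is argued only briefly there, so I would recheck it. Suppose three edges form an induced matching of $B_b$. Edges inside the $x$-clique pairwise touch, and likewise inside the $y$-clique, so at most one edge lies in each clique. Hence at least one of the three edges is a matching edge $\{x_k,y_k\}$. Then $x_k$ is adjacent to every other $x$-vertex and $y_k$ to every other $y$-vertex, and every edge of $B_b$ meets some $x$- or $y$-vertex. So no second edge can coexist with it, which is a contradiction.

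For $b=3$, the same reasoning shows that two disjoint edges cannot both lie in one clique, since they would share a vertex there. Nor can one edge contain a matching edge, by the argument above. The only remaining case is one edge from each clique, $\{x_i,x_j\}$ and $\{y_k,y_l\}$. Being induced forces $\{i,j\}\cap\{k,l\}=\emptyset$, which needs four distinct indices and so fails when $b=3$.
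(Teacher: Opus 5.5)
Your proposal is correct and follows the paper's proof. It uses additivity $\nu(nB_b)=n\,\nu(B_b)$ over the disjoint union, applies Proposition~\ref{prop:joinproperties}(iv) to discard the $\overline{K}_a$ factor, and then invokes Proposition~\ref{prop:nu-block}. Your direct checks of the join step and of the single-block values are also sound, in particular the observation that a matching edge $\{x_k,y_k\}$ can never lie in an induced matching together with another edge; they tighten the rather terse argument the paper gives for Proposition~\ref{prop:nu-block}.
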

	
	\begin{proof}
		Since $nB_b$ is the disjoint union of $n$ copies of $B_b$, induced matching numbers add across the components
		$ \nu(nB_b)=n,\nu(B_b). $
		Now apply Proposition~\ref{prop:joinproperties}\textup{(iv)} to the join
		$ G_{a,b,n}=\overline{K}_a\join nB_b, $ we obtain
		$$
		\nu(G_{a,b,n})=\nu(nB_b)=n,\nu(B_b),
		$$
		since $\nu(\overline{K}_a)=0$ and $\nu(nB_b)\neq 0$.
		Now, the result follows from Proposition~\ref{prop:nu-block}.
	\end{proof}
	
	The next theorem gives the exact well-covered and unmixed criterion.
	\begin{theorem}\label{thm:wellcovered}
		 For $G_{a,b,n}$, the following are equivalent:
		\begin{enumerate}[label=\textup{(\roman*)},leftmargin=8mm]
			\item $G_{a,b,n}$ is well-covered;
			\item $I(G_{a,b,n})$ is unmixed;
			\item $\Delta(G_{a,b,n})$ is pure;
			\item $a=2n$.
		\end{enumerate}
	\end{theorem}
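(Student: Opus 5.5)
The plan is to prove the cycle of equivalences through the standard dictionary between graphs, edge ideals and independence complexes, with the graph-specific content concentrated in the step (iii)$\Leftrightarrow$(iv).

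\emph{(i)$\Leftrightarrow$(iii).} By definition, $G_{a,b,n}$ is well-covered when all maximal independent sets have the same size. The facets of $\Delta(G_{a,b,n})$ are exactly the maximal independent sets of $G_{a,b,n}$. So well-coveredness is literally purity of $\Delta(G_{a,b,n})$.

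\emph{(i)$\Leftrightarrow$(ii).} The minimal primes of $I(G_{a,b,n})$ are the ideals $(x : x\in C)$ with $C$ a minimal vertex cover. Unmixedness of the squarefree monomial ideal therefore means all minimal vertex covers have the same size. Since $C\mapsto V\setminus C$ is a bijection between minimal vertex covers and maximal independent sets, this is again equivalent to (i). As a cross-check I would also read (ii) directly off Proposition~\ref{prop:mincovers}: the cover sizes are $2bn$ and $a+2n(b-1)$, and these coincide exactly when $a=2n$.

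\emph{(iii)$\Leftrightarrow$(iv).} This is Proposition~\ref{prop:complexdecomp}(iv). The facets of $\Delta(G_{a,b,n})=\langle A\rangle\cup\Gamma_b^{\star n}$ are the single facet $A$ of size $a$ together with the facets of $\Gamma_b^{\star n}$, which all have size $2n$.

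One subtlety needs attention. I must confirm that $A$ really is a facet, i.e.\ that it is not contained in a larger independent set. It is not, because every vertex of $A$ is adjacent to every block vertex. I must likewise confirm that no facet of $\Gamma_b^{\star n}$ extends into $A$. Both facts follow from the join structure noted in the proof of Proposition~\ref{prop:complexdecomp}.

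I would also note that Proposition~\ref{prop:joinproperties}(i) gives an independent confirmation. Each factor is well-covered: $\overline{K}_a$ trivially, and $nB_b$ because every maximal independent set of $B_b$ has size $2$ for $b\ge 3$, since any $x_i$ is non-adjacent to some $y_j$ with $j\neq i$. The criterion then reduces to $\alpha(\overline{K}_a)=\alpha(nB_b)$, that is, $a=2n$.

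The only potential obstacle is the claim that $B_b$ is well-covered, which requires $b\ge 2$ so that each singleton extends to an edge of the crown. This holds under the standing assumption $b\ge3$ and also for $b=2$.
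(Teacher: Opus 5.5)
Your proposal is correct and follows essentially the same route as the paper: you read purity off the decomposition $\Delta(G_{a,b,n})=\langle A\rangle\cup\Gamma_b^{\star n}$ from Proposition~\ref{prop:complexdecomp}, and you handle unmixedness through the minimal-vertex-cover versus maximal-independent-set dictionary, with Proposition~\ref{prop:mincovers} as a cross-check. Your explicit check that $A$ and the facets of $\Gamma_b^{\star n}$ are genuinely maximal in $\Delta(G_{a,b,n})$, and the confirmation via Proposition~\ref{prop:joinproperties}(i), are sound additions that the paper leaves implicit.
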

	
	\begin{proof}
		By Proposition~\ref{prop:complexdecomp}, $\Delta(G_{a,b,n})$ is pure if and only if $a=2n$, so \textup{(iii)} and \textup{(iv)} are equivalent. The maximal independent sets of $G_{a,b,n}$ are exactly of two types, the set $A$ of size $a$, and the facets of $\Gamma_b^{\star n}$, all of which have size $2n$ by Proposition~\ref{prop:complexdecomp}. Therefore, all maximal independent sets have the same size if and only if $a=2n$. Hence \textup{(i)} and \textup{(iv)} are equivalent.
		 For graphs, unmixedness of the edge ideal is equivalent to all minimal vertex covers having the same size, which is in turn equivalent to all maximal independent sets having the same size. Alternatively, Proposition~\ref{prop:mincovers} shows that the only minimal cover sizes are $2bn$ and $a+2n(b-1)$, and these are equal exactly when $a=2n$. Thus \textup{(ii)} and \textup{(iv)} are equivalent as well.
	\end{proof}
	
	The next theorem shows that the family never becomes Cohen--Macaulay.
	\begin{theorem}\label{thm:notCM}		
		For every $a\geq 1$, $b\geq 2$, and $n\geq 1$, the graph $G_{a,b,n}$ is not Cohen--Macaulay.
	\end{theorem}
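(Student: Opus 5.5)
The quickest route uses the depth computed in Theorem~\ref{thm:topbetti} together with the dimension of the independence complex. Cohen--Macaulayness of $R/I(G_{a,b,n})$ means $\depth(R/I(G_{a,b,n}))=\dim(R/I(G_{a,b,n}))$. The Krull dimension of the Stanley--Reisner ring $K[\Delta(G_{a,b,n})]$ equals $\dim\Delta(G_{a,b,n})+1$, which is $\alpha(G_{a,b,n})=\max\{a,2n\}$. Since $n\geq 1$, this is at least $2$. Theorem~\ref{thm:topbetti} gives $\depth=1$, so $\depth<\dim$ and the ring is not Cohen--Macaulay.

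One caveat is that Theorem~\ref{thm:topbetti} is stated under the standing hypothesis $b\geq 3$, while the present statement allows $b\geq 2$. However, its proof uses only two facts: that $\Delta(G_{a,b,n})$ is disconnected, and Hochster's formula applied to the full vertex set. Proposition~\ref{prop:complexdecomp} yields the decomposition $\langle A\rangle\cup\Gamma_b^{\star n}$ for every $b\geq 2$. For $b=2$ the complex $\Gamma_2$ consists of the two edges $\{x_1,y_2\}$ and $\{x_2,y_1\}$, so it is itself disconnected. This does not matter, because the simplex $\langle A\rangle$ is already a separate component. So $\widetilde H_0(\Delta;K)\neq 0$, hence $\beta_{N-1,N}\neq 0$ and $\depth=1$, in all cases. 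I would write this out explicitly so that the case $b=2$ is covered. The proof of Theorem~\ref{thm:topbetti} claims that $\Delta$ has exactly two components, but only $\dim_K\widetilde H_0\geq 1$ is needed here.

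As an independent cross-check, and a backup if one prefers purely combinatorial reasoning, there is a second argument. A Cohen--Macaulay complex is pure, so Theorem~\ref{thm:wellcovered} forces $a=2n$. In that case $\Delta$ is pure of dimension $2n-1\geq 1$ but disconnected. Cohen--Macaulay complexes of dimension at least $1$ are connected, since $\widetilde H_0$ of the whole complex must vanish, so this is a contradiction. Alternatively, Proposition~\ref{prop:joinproperties}(ii) applies directly: $\overline{K}_a$ is complete only when $a=1$, and $nB_b$ is never complete. The condition on $nB_b$ alone already settles every case.

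The only real obstacle is the bookkeeping just described. The argument must not tacitly use $b\geq 3$, and the strict inequality $\max\{a,2n\}>1$ must be justified, which holds because $n\geq 1$. Everything else is immediate from results already established.
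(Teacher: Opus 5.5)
Your main argument is the paper's proof: $\depth(R/I(G_{a,b,n}))=1$ from Theorem~\ref{thm:topbetti}, compared with $\dim(R/I(G_{a,b,n}))=\alpha(G_{a,b,n})=\max\{a,2n\}\geq 2$. Your extra check that the depth computation does not need $b\geq 3$ is correct and covers a point the paper passes over: for $b=2$, $\Gamma_2$ is two disjoint edges, so when $n=1$ the complex has three components and $\beta_{N-1,N}=2$, but $\widetilde{H}_0(\Delta(G_{a,b,n});K)\neq 0$ still gives depth $1$.
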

	
	\begin{proof}
		By Theorem~\ref{thm:topbetti}, we have
		$$
		\depth(R/I(G_{a,b,n}))=1.
		$$
		On the other hand, the Krull dimension of the edge ring equals the independence number of the graph, so we obtain
		$$
		\dim(R/I(G_{a,b,n}))=\alpha(G_{a,b,n})=\max\{a,2n\}\geq 2.
		$$
		Thus, we have
		$$
		\depth(R/I(G_{a,b,n}))<\dim(R/I(G_{a,b,n})),
		$$
		and therefore $R/I(G_{a,b,n})$ is not Cohen--Macaulay.
	\end{proof}
	
	The next theorem rules out several stronger topological properties.
	\begin{theorem}\label{thm:notshellable}
		 For every $a\geq 1$, $b\geq 2$, and $n\geq 1$, the graph $G_{a,b,n}$ is neither vertex decomposable nor shellable.
	\end{theorem}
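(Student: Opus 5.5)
The plan is to deduce the statement from Proposition~\ref{prop:joinproperties}(ii) together with the standard chain of implications vertex decomposable $\Rightarrow$ shellable $\Rightarrow$ sequentially Cohen--Macaulay. The graph is the join $G_{a,b,n}=\overline{K}_a\join nB_b$ of two factors. Proposition~\ref{prop:joinproperties}(ii) says that such a join is vertex decomposable, or shellable, if and only if every factor is complete. So it suffices to exhibit a factor that is not complete.

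\emph{Key step.} The factor $nB_b$ is never complete. For $n\geq 1$ and $b\geq 2$, each copy of $B_b$ contains a non-edge $\{x_1,y_2\}$, because $x_1$ is adjacent to $y_1$ only among the $y$-vertices. This already shows $nB_b$ is not complete. When $n\geq 2$, vertices in distinct blocks are also nonadjacent. The factor $\overline{K}_a$ is complete only when $a=1$, but the argument needs just one non-complete factor, so this case causes no trouble. Proposition~\ref{prop:joinproperties}(ii) then shows that $G_{a,b,n}$ is neither vertex decomposable nor shellable, since shellable here means that $\Delta(G_{a,b,n})$ is shellable in the (possibly nonpure) sense.

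\emph{Independent check.} Shellability in the nonpure sense of Bj\"orner--Wachs implies sequential Cohen--Macaulayness. Sequential Cohen--Macaulayness forces the pure $d$-skeleta $\Delta^{[d]}$ to be Cohen--Macaulay, and in particular connected in dimension $\geq 1$. By Proposition~\ref{prop:complexdecomp}, $\Delta(G_{a,b,n})$ is the disjoint union of the simplex $\langle A\rangle$ and $\Gamma_b^{\star n}$. Take $d=\min\{a,2n\}-1$. If $d\geq 1$, the pure $d$-skeleton contains $d$-faces from both components and is therefore disconnected, hence not Cohen--Macaulay. This contradiction confirms the claim whenever $\min\{a,2n\}\geq 2$.

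\emph{Main obstacle.} The delicate case is $a=1$, where $\langle A\rangle$ is a single point and the skeleton argument gives nothing. A disjoint union of a point and a shellable complex can in fact be nonpure-shellable, so this case really does depend on Proposition~\ref{prop:joinproperties}(ii) rather than on the skeleton argument. I would therefore rest the proof on the join criterion and present the skeleton argument only as a remark.
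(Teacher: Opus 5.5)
Your main argument is the paper's: apply Proposition~\ref{prop:joinproperties}(ii) and use that $nB_b$ is never complete. The gap is the case $a=1$. You flag it correctly, but you then settle it by deferring to that citation, after explicitly adopting the nonpure (Bj\"orner--Wachs) notion. In that sense the statement is false for $a=1$, $b\geq 3$:
\begin{itemize}
\item $\Gamma_b$ is the crown graph, which is connected for $b\geq 3$. So it is a vertex decomposable (hence shellable) pure $1$-dimensional complex.
\item Joins preserve vertex decomposability, so $\Gamma_b^{\star n}$ is vertex decomposable.
\item In $\Delta(G_{1,b,n})=\{u_1\}\sqcup\Gamma_b^{\star n}$, the cone vertex $u_1$ is a shedding vertex. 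Indeed $\operatorname{lk}(u_1)=\{\emptyset\}$, $\operatorname{del}(u_1)=\Gamma_b^{\star n}$, and $\emptyset$ is not a facet of $\Gamma_b^{\star n}$. In Woodroofe's graph language, $N[w]\subseteq N[u_1]$ for every vertex $w$.
\end{itemize}
Thus $G_{1,b,n}$ is vertex decomposable and shellable. Your own remark that a point disjoint from a shellable complex is nonpure-shellable already gives this, once you check that $\Gamma_b^{\star n}$ is shellable. That check is exactly what you did not do. Consequently Proposition~\ref{prop:joinproperties}(ii), as quoted, is false for vertex decomposability and shellability in the nonpure sense. Already the diamond $K_1\join P_3$ is a counterexample: it is chordal, hence vertex decomposable. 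So no appeal to that proposition can close the case $a=1$.

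What can actually be proved depends on the definition, which the paper never fixes.
\begin{itemize}
\item \emph{Pure sense.} If vertex decomposable and shellable are taken in the pure sense, so that they imply Cohen--Macaulayness, the statement holds for all $a,b,n$. It is then immediate from Theorem~\ref{thm:notCM}, and neither the join criterion nor the skeleton argument is needed.
\item \emph{Nonpure sense, $a\geq 2$.} Here the statement is true. Your pure-skeleton argument via Duval's criterion is a correct, self-contained proof, and it even rules out sequential Cohen--Macaulayness, as in Theorem~\ref{thm:notSCM}.
\item \emph{Nonpure sense, $a=1$, $b=2$.} The statement is also true here. The complex $\Gamma_2\cong 2K_2$ is disconnected, so the pure complex $\Gamma_2^{\star n}$ is not Cohen--Macaulay, hence not shellable. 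Any shelling of $\{u_1\}\sqcup\Gamma_2^{\star n}$ would restrict to a shelling of $\Gamma_2^{\star n}$.
\end{itemize}
So in the nonpure sense the statement holds exactly when $a\geq 2$ or $b=2$. The paper's own proof is the same one-line appeal to Proposition~\ref{prop:joinproperties}(ii), so it has the same defect at $a=1$.
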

	
	\begin{proof}
		With $ G_{a,b,n}=\overline{K}_a\join nB_b. $
		If $a\geq 2$, then $\overline{K}_a$ is not complete, and $nB_b$ is never complete, since it is disconnected with at least one nontrivial block. Therefore Proposition~\ref{prop:joinproperties}\textup{(ii)} implies that $G_{a,b,n}$ is neither vertex decomposable nor shellable. If $a=1$, then $\overline{K}_1$ is complete, but $nB_b$ is still not complete. Proposition~\ref{prop:joinproperties}\textup{(ii)} again implies the same conclusion.
	\end{proof}
	
	The next theorem gives a partial sequential Cohen--Macaulay obstruction.
	\begin{theorem}\label{thm:notSCM}
		 If $a\geq 2$, then $G_{a,b,n}$ is not sequentially Cohen--Macaulay.
	\end{theorem}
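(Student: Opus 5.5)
The plan is to argue exactly as in the proof of Theorem~\ref{thm:notshellable}, but using part \textup{(iii)} of Proposition~\ref{prop:joinproperties} instead of part \textup{(ii)}. Write
$$
G_{a,b,n}=\overline{K}_a\join nB_b
$$
as a join with exactly two factors, $G_1=\overline{K}_a$ and $G_2=nB_b$. By Proposition~\ref{prop:joinproperties}\textup{(iii)}, $G_{a,b,n}$ is sequentially Cohen--Macaulay if and only if one of the two factors is sequentially Cohen--Macaulay and the other factor is complete. So it suffices to show that, when $a\geq 2$, neither factor is complete.

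For $G_1$: when $a\geq 2$, $\overline{K}_a$ has at least two vertices and no edges, so it is not complete. This is the only place the hypothesis $a\geq 2$ is used. For $G_2$: if $n\geq 2$, then $nB_b$ is disconnected, hence not complete. If $n=1$, then $B_b$ is still not complete, since for $b\geq 2$ the vertices $x_1$ and $y_2$ are non-adjacent; indeed $\{x_1,y_2\}$ is one of the edges of $\Gamma_b$ in Proposition~\ref{prop:complexdecomp}.

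Since neither factor is complete, whichever factor we take to be sequentially Cohen--Macaulay, the other one is not complete. Hence the criterion fails, and $G_{a,b,n}$ is not sequentially Cohen--Macaulay.

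There is no real computational obstacle here. The one point needing care is how to read Proposition~\ref{prop:joinproperties}\textup{(iii)} when $t=2$: the ``remaining factors'' consist of the single other factor, so we need that \emph{both} choices of the distinguished factor fail. This is exactly the statement that neither factor is complete.

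It is also worth recording why the theorem is only partial. When $a=1$, the factor $\overline{K}_1$ is complete, so the criterion reduces to whether $nB_b$ is sequentially Cohen--Macaulay. That question is not settled by the join criterion. One would have to check it separately, for instance via Duval's pure-skeleton criterion applied to $\Gamma_b^{\star n}$.
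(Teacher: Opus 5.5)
Your proposal is correct and is essentially the paper's own argument: both apply Proposition~\ref{prop:joinproperties}\textup{(iii)} to the two-factor join $\overline{K}_a\join nB_b$ and observe that, for $a\geq 2$, neither $\overline{K}_a$ nor $nB_b$ is complete. Your explicit remark that, with only two factors, both choices of the distinguished factor must fail is a helpful clarification that the paper leaves implicit.
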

	
	\begin{proof}
		Since $a\geq 2$, the graph $\overline{K}_a$ is not complete. The graph $nB_b$ is also not complete for every $n\geq 1$ and $b\geq 2$. Therefore Proposition~\ref{prop:joinproperties}\textup{(iii)} shows that the join
		$ G_{a,b,n}=\overline{K}_a\join nB_b $
		cannot be sequentially Cohen--Macaulay.
	\end{proof}
	
	The next problem isolates the remaining sequential Cohen--Macaulay case.
	\begin{openproblem}
		 Determine whether $MECS_{b,n}^1$ is sequentially Cohen--Macaulay for some values of $b$ and $n$. Equivalently, determine the sequential Cohen--Macaulay behavior of the repeated block graph $nB_b$.
	\end{openproblem}
	
	 Theorem~\ref{thm:wellcovered} is the exact analogue of the well-covered criteria for the previously studied families, but with the threshold shifted from $a=n$ in the clique-block case to $a=2n$ here. This shift has a clean combinatorial explanation, as every $K_b$-block contributes one independent vertex, whereas every $B_b$-block contributes a maximal independent pair.
	
	The next example compares two parameter choices with different covering behavior.
	\begin{example}\label{ex:matching}
		 For $G_{4,4,2}=MECS_{4,2}^{4}$, Theorem~\ref{thm:wellcovered} implies 	$ a=4=2n, $ so the graph is well-covered and unmixed. Theorem~\ref{thm:nu-G} gives
		$ \nu(G_{4,4,2})=2n=4 $, since $b=4$.
		
		By contrast, for $G_{3,3,2}=MECS_{3,2}^{3}$, we  have $a=3\neq 4$, so the graph is neither pure nor well-covered, while
		$ \nu(G_{3,3,2})=n=2, $ since $b=3$.
	\end{example}
	
	\section{Algorithmic evaluation, examples, and comparison tables}\label{sec:algorithm}
	 The formulas of the previous sections are exact and closed, but one still wants a reproducible way to generate Betti data for concrete parameters. The present section turns the theory into a practical computation scheme and records sample outputs.
	
	\begin{algorithm}[H]
		\caption{Betti-data computation for $MECS_{b,n}^a$}
		\label{alg:mecs}
		\begin{algorithmic}[1]
			\Require Integers $a\geq 1$, $b\geq 2$, $n\geq 1$
			\Ensure Structural invariants and nonzero graded Betti numbers of $MECS_{b,n}^a$
			\State Construct the block data of $B_b$
			\If{$b\geq 3$}
			\State Compute $\lambda_i$ and $\mu_i$ from Theorem~\ref{thm:blockbetti}
			\State Form $P_b(u,v)=1+\sum \lambda_i u^iv^{i+1}+\sum \mu_i u^iv^{i+2}$
			\Else
			\State Replace $P_b(u,v)$ by the Betti polynomial of $C_4$
			\EndIf
			\State Compute $P_b(u,v)^n$ by repeated sparse convolution
			\State Extract $\beta_{i,d}(nB_b)$ from the coefficients of $P_b(u,v)^n$
			\State Apply Theorem~\ref{thm:joinformula-new} to obtain $\beta_{i,d}(MECS_{b,n}^a)$
			\State Compute
			$$
			|V|,\ |E|,\ \alpha,\ \omega,\ \chi,\ \tau,\ \nu,\ \reg,\ \pd
			$$
			from Section \ref{sec:independence}, Proposition \ref{prop:mincovers}, Theorems \ref{thm:nu-G}, \ref{thm:reg}, and \ref{thm:topbetti}
			\State Output the Betti table, the independence polynomial, and the comparison summary
		\end{algorithmic}
	\end{algorithm}
	
	The next proposition gives a coarse complexity estimate for Algorithm~\ref{alg:mecs}.
	\begin{proposition}\label{prop:complexity}
		 Fix $b$ and let $M:=2b-2$. Using naive sparse convolution on the support of $P_b(u,v)$, the first two Betti strands of $MECS_{b,n}^a$ can be computed in $O(n^2M^2)$ scalar operations, while a full expansion of $P_b(u,v)^n$ requires at most $O(n^3M^3)$ scalar operations. The subsequent join transformation contributes only lower-order binomial convolutions.
	\end{proposition}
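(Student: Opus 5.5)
The proof is a direct operation count on the sparse support of $P_b(u,v)$. By Proposition~\ref{prop:blockpoly}, $P_b(u,v)$ has at most $2M+1$ nonzero monomials: the constant term, the $M$ linear-strand terms $\lambda_r u^rv^{r+1}$, and the $M$ quadratic-strand terms $\mu_r u^rv^{r+2}$. It is convenient to encode each monomial $u^iv^d$ by the pair $(i,s)$ with $s=d-i$. Then $P_b^k$ is supported on $0\le i\le kM$ and $0\le s\le 2k$, so it has $O(k^2M)$ nonzero coefficients. The whole argument consists of bounding the cost of forming these coefficients.

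\textbf{First two strands.} These do not require expanding $P_b^n$. Corollary~\ref{cor:linear-nBb} gives $\beta_{i,i+1}(nB_b)=n\lambda_i$, which costs $O(M)$ operations in total. Theorem~\ref{thm:quadratic-nBb} gives $\beta_{i,i+2}(nB_b)$ as $n\mu_i+\binom n2\sum_{p+q=i}\lambda_p\lambda_q$. The self-convolution $\sum_{p+q=i}\lambda_p\lambda_q$, over all $i\le 2M$, takes $O(M^2)$ operations. The resulting strand entries have homological degree up to $nM$. Feeding them into the join formula of Theorem~\ref{thm:joinformula-new} (through Theorems~\ref{thm:linear-G} and \ref{thm:quadratic-G}) needs, for each of the $O(nM)$ indices $i$, a binomial convolution of length $O(nM)$. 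That costs $O(n^2M^2)$ operations, and the binomials $\binom{a}{j}$ and $\binom{2bn}{m}$ are precomputed once. This yields the stated $O(n^2M^2)$ bound for the first two strands, counting binomial coefficients as scalars.

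\textbf{Full expansion.} Compute $P_b^n$ iteratively as $P_b^{k+1}=P_b^k\cdot P_b$. Each step multiplies $O(k^2M)$ coefficients by $O(M)$ terms, costing $O(k^2M^2)$. Summing over $k<n$ gives $O(n^3M^2)$, which lies within the claimed bound $O(n^3M^3)$.

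\textbf{Join transformation.} This final step is the one I expect to need the most care, because "lower order" must be checked against the cost of the expansion. Applying Theorem~\ref{thm:joinformula-new} to every $(i,d)$ means summing $O(\min(a,nM))$ terms for each of the $O(n^2M)$ support entries with $s\le 2n$. I would bound $j$ by the homological range $nM$, so the cost is $O(n^3M^2)$. This is dominated by the $O(n^3M^3)$ expansion bound, which justifies the phrase "lower-order". The correction term $\delta_{d,i+1}$ adds only $O((nM)^2)$ work. If $a$ is allowed to be large, I would note that the terms with $\binom{a}{j}$ for $j>nM$ multiply vanishing Betti numbers and can be skipped.
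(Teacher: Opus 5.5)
Your main line is sound, and your accounting is different from the paper's and sharper. The paper gets the first two strands by a \emph{truncated} repeated convolution of $P_b(u,v)$, tracking only total shift $\le 2$ with $O(M^2)$ pairings per step. It bounds the full expansion loosely as cubic in the homological range, $O((nM)^3)$, and dismisses the join as coefficient lookups. You instead read the block strands off Corollary~\ref{cor:linear-nBb} and Theorem~\ref{thm:quadratic-nBb} in $O(M^2)$. This shows the $n^2M^2$ cost lives entirely in the join, specifically its linear correction term. You also count the support of $P_b^k$ explicitly to get $O(n^3M^2)$ for the full expansion. The paper's version uses one convolution routine throughout, as in Algorithm~\ref{alg:mecs}; yours is more checkable and locates the true cost. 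Note, though, that in your own accounting the join dominates for the two strands. For the full table it has the \emph{same} order $n^3M^2$ as the expansion, so ``lower-order'' is only justified against the looser envelope $O(n^3M^3)$.

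The real problem is the parameter $a$. Your counts ``$O(nM)$ indices $i$'' and ``$O(n^2M)$ support entries'' describe $nB_b$, not $G_{a,b,n}$, whose table runs up to homological degree $N-1=a+2bn-1$. By Vandermonde, the correction term in Theorem~\ref{thm:linear-G} equals $\binom{a+2bn}{i+1}-\binom{2bn}{i+1}-\binom{a}{i+1}$. This is positive for every $1\le i\le N-1$, because the summand $j=\max\{1,i+1-2bn\}$ is nonzero. So the linear strand alone has $N-1$ nonzero entries, and no bound free of $a$ can hold.

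Your proposed repair for large $a$ is false. The step ``bound $j$ by the homological range $nM$'' conflates the \emph{length} of the window of relevant $j$ with the \emph{value} of $j$. The factor $\beta_{i-j,d-j}(nB_b)$ can be nonzero whenever $1\le i-j\le nM$, however large $j$ is. For example, $\beta_{a+1,a+2}(G_{a,b,n})$ contains the summand $\binom{a}{a}\beta_{1,2}(nB_b)=nb^2>0$ with $j=a>nM$. What is true is that each target entry has at most $\min\{a,nM\}+1$ nonzero summands. The honest costs are therefore:
\begin{itemize}
\item $O\bigl((a+nM)(M+\min\{a,nM\})\bigr)$ for the two strands, or $O((a+nM)M)$ if the correction term is evaluated via Vandermonde;
\item $O\bigl((a+nM)\,n\min\{a,nM\}\bigr)$ for the full join.
\end{itemize}
These reduce to your $O(n^2M^2)$ and $O(n^3M^2)$ when $a=O(nM)$, a hypothesis the paper also uses tacitly. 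You should state that hypothesis explicitly rather than claim it can be removed.
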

	
	\begin{proof}
		The polynomial $P_b(u,v)$ has $O(M)$ nonconstant terms. To obtain the linear and quadratic strands, only combinations with total shift at most $2$ need to be tracked, so each convolution step uses $O(M^2)$ pairings, and there are $O(n^2)$ such pairings across the repeated multiplications. For the full table, the support broadens with the number of blocks, and a naive bound is cubic in the maximal homological range, hence $O(n^3M^3)$. The join formula in Theorem~\ref{thm:joinformula-new} is then just a finite sum of coefficient lookups weighted by binomial coefficients.
	\end{proof}
	
	\begin{figure}[H]
		\centering
		\begin{tikzpicture}[
			node distance=8mm and 10mm,
			box/.style={rectangle,rounded corners,draw=blue!60!black,fill=blue!7,minimum width=36mm,minimum height=9mm,align=center},
			arrow/.style={-{Latex[length=2.3mm]},thick}
			]
			\node[box] (input) {Input $(a,b,n)$};
			\node[box,below=of input] (block) {Compute block data\ $\lambda_i,\mu_i$ or $C_4$ data};
			\node[box,below=of block] (poly) {Build $P_b(u,v)$};
			\node[box,below=of poly] (power) {Repeated sparse convolution\ to obtain $P_b(u,v)^n$};
			\node[box,below=of power] (join) {Apply join transform\ Theorem~\ref{thm:joinformula-new}};
			\node[box,below=of join] (output) {Output Betti table,\ $\Ind(G,z)$, $\reg(G)$, $\pd(G)$, $\nu(G)$, etc.};
			
			\draw[arrow] (input) -- (block);
			\draw[arrow] (block) -- (poly);
			\draw[arrow] (poly) -- (power);
			\draw[arrow] (power) -- (join);
			\draw[arrow] (join) -- (output);
		\end{tikzpicture}
		\caption{A flow diagram for the computation of the homological data of $MECS_{b,n}^a$.}
		\label{fig:flow}
	\end{figure}
	
	Figure \ref{fig:flow} shows the computation flow of the homological data of $MECS_{b,n}^a$ (see Algorithm \ref{alg:mecs}). The scheme separates the repeated-block step from the join step, which is conceptually useful and computationally efficient.
	
	\medskip
	The next example gives a concrete Betti computation.
	\begin{example}\label{ex:mainexample}
		 For $ G_{3,3,2}=MECS_{3,2}^{3},$  Examples~\ref{ex:linear} and \ref{ex:quadratic}, together with Theorems~\ref{thm:topbetti}, \ref{thm:reg}, and \ref{thm:nu-G}, we obtain
		$ |V(G_{3,3,2})|=15,  |E(G_{3,3,2})|=54, 
		\alpha(G_{3,3,2})=4, 
		\omega(G_{3,3,2})=\chi(G_{3,3,2})=4,
		\reg(G_{3,3,2})=4, 
		\pd(G_{3,3,2})=14, 
		\nu(G_{3,3,2})=2, $
		and the graph is not well-covered because $a=3\neq 2n=4$.
		 The first nonzero Betti numbers are
		$
		\beta_{1,2}=54, 
		\beta_{2,3}=320, 
		\beta_{2,4}=81, 
		\beta_{3,4}=1038, 
		\beta_{3,5}=531,
		$
		while the top entry is
		$
		\beta_{14,15}=1.
		$
		A longer list appears in Table~\ref{tab:betti-example}.
	\end{example}
	\begin{table}[H]
		\centering
		\caption{Selected nonzero graded Betti numbers of $G_{3,3,2}=MECS_{3,2}^{3}$.}
		\label{tab:betti-example}
		\begin{tabular}{ccc}
			\toprule
			Homological degree $i$ & Nonzero $d$ & $\beta_{i,d}(G_{3,3,2})$ \\
			\midrule
			$1$ & $2$ & $54$ \\
			$2$ & $3,4$ & $320,\ 81$ \\
			$3$ & $4,5$ & $1038,\ 531$ \\
			$4$ & $5,6$ & $2379,\ 1527$ \\
			$5$ & $6,7,8$ & $4167,\ 2493,\ 18$ \\
			$6$ & $7,8,9$ & $5661,\ 2493,\ 86$ \\
			$7$ & $8,9,10$ & $5940,\ 1527,\ 168$ \\
			$8$ & $9,10,11,12$ & $4785,\ 531,\ 168,\ 1$ \\
			$14$ & $15$ & $1$ \\
			\bottomrule
		\end{tabular}
	\end{table}
	
	The next proposition summarizes how the present family sits relative to earlier split-like families.
	\begin{proposition}\label{prop:specialization}
		 The graph family $MECS_{b,n}^a$ satisfies the following specialization pattern.
		\begin{enumerate}[label=\textup{(\roman*)},leftmargin=8mm]
			\item If $n=1$, then $MECS_{b,1}^a=\overline{K}_a\join (K_b+K_2)$ is the extended complete split-like graph.
			\item If $K_b+K_2$ is replaced by $K_b$, then one obtains the multiple complete split-like graph $MCS_{b,n}^a$ from \cite{AnandGuptaRatherSingh2026}.
			\item In passing from $MCS_{b,n}^a$ to $MECS_{b,n}^a$, the independence contribution of each block changes from $1$ to $2$, and the regularity contribution of each block changes from $1$ to $2$ for $b\geq 3$.
		\end{enumerate}
	\end{proposition}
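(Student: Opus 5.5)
Each of the three parts of Proposition~\ref{prop:specialization} will be read off from the definitions together with results already proved; no new homological computation is needed.

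For \textup{(i)}, substitute $n=1$ into the definition $MECS_{b,n}^a=\overline{K}_a\join(nB_b)$. Since $1\cdot B_b=B_b=K_b+K_2$, we get exactly the extended complete split-like graph of \cite{AnandGuptaRatherSingh2026}. This is the identification already recorded in Corollary~\ref{cor:ECSrecover}.

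For \textup{(ii)}, replace the block $B_b$ by $K_b$ in the construction. This yields $\overline{K}_a\join(nK_b)$, which is by definition the multiple complete split-like graph $MCS_{b,n}^a$ of \cite{AnandGuptaRatherSingh2026}. So \textup{(ii)} is again a matter of unwinding definitions.

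Part \textup{(iii)} carries the content, and I will treat it blockwise.

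\emph{Independence contribution.} For $K_b$ we have $\alpha(K_b)=1$, whereas $\alpha(B_b)=2$; the latter follows from the proof of Theorem~\ref{thm:indpol}, where the largest independent sets of $B_b$ are the pairs $\{x_i,y_j\}$ with $i\neq j$. Independence numbers add over disjoint unions, so $\alpha(nK_b)=n$ and $\alpha(nB_b)=2n$. This per-block change from $1$ to $2$ is what moves the well-covered threshold from $a=n$ to $a=2n$, as in Theorem~\ref{thm:wellcovered}.

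\emph{Regularity contribution.} For $K_b$ the edge ideal has a linear resolution: the independence complex of $K_b$ is $b$ discrete points, so by Hochster's formula only the strand $d=i+1$ survives. Hence $\reg(K_b)=1$. By contrast, $\reg(B_b)=2$ for $b\geq 3$ by Theorem~\ref{thm:blockbetti}. Lemma~\ref{lem:tensor} then gives $\reg(nK_b)=n$ and $\reg(nB_b)=2n$. Finally, the join formula of Theorem~\ref{join regularity formula} with $\reg(\overline{K}_a)=0$ transfers these values to $MCS_{b,n}^a$ and to $MECS_{b,n}^a$, the latter being Theorem~\ref{thm:reg}.

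The only step needing real care is the claim $\reg(K_b)=1$. I will justify it either by the Hochster computation above or by citing the known regularity $n$ of the clique-block family from \cite{AnandGuptaRatherSingh2026}. I will also note that the hypothesis $b\geq 3$ is essential: by Remark~\ref{rem:b2}, $B_2\cong C_4$ has regularity $1$.
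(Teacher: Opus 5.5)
Your proposal is correct and follows essentially the paper's own argument: (i) and (ii) are read off from the definitions, and (iii) comes from $\alpha(K_b)=1$ versus $\alpha(B_b)=2$ and $\reg(K_b)=1$ versus $\reg(B_b)=2$ for $b\geq 3$, combined with additivity under disjoint unions. Your Hochster-formula justification of $\reg(K_b)=1$ and the explicit transfer through the join regularity formula fill in details that the paper leaves implicit.
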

	
	\begin{proof}
		Statements \textup{(i)} and \textup{(ii)} are immediate from the definitions. Statement \textup{(iii)} follows from the fact that $\alpha(K_b)=1$ whereas $\alpha(B_b)=2$, and from the regularity formulas $\reg(K_b)=1$ and $\reg(B_b)=2$ for $b\geq 3$, together with additivity under disjoint union.
	\end{proof}
	
	Table \ref{tab:comparison} gives the invariants comparison for several split-like graph families. The last row is the new family $MECS_{b,n}^a$ treated in our study.
	\begin{table}[H]
		\centering
		\small 
		\setlength{\tabcolsep}{4pt} 
		\caption{Comparison of several split-like graph families.}
		\label{tab:comparison}
		\begin{tabularx}{\textwidth}{@{} l c c c c >{\centering\arraybackslash}X l @{}}
			\toprule
			Family & $|V|$ & $\alpha$ & $\operatorname{reg}$ & $\operatorname{pd}$ & Well-covered criterion & Comment \\
			\midrule
			$CS_b^a$ & $a+b$ & $\max\{a,1\}=a$ & $1$ & $a+b-1$ & $a=1$ & complete split \\
			$ECS_b^a$ & $a+2b$ & $\max\{a,2\}$ & $2$ & $a+2b-1$ & $a=2$ & one $B_b$ block \\
			$MCS_{b,n}^a$ & $a+bn$ & $\max\{a,n\}$ & $n$ & $a+bn-1$ & $a=n$ & repeated clique blocks \\
			$MECS_{b,n}^a$ & $a+2bn$ & $\max\{a,2n\}$ & $2n$ & $a+2bn-1$ & $a=2n$ & repeated $B_b$ blocks \\
			\bottomrule
		\end{tabularx}
	\end{table}
	
\section{Properties of independence polynomial of $G_{a,b,n}.$}\label{ind section}
		
		Throughout this section, we write
		$$
		P_{a,b,n}(z):=\Ind(G_{a,b,n},z)
		=(1+z)^a+\bigl(1+2bz+b(b-1)z^2\bigr)^n-1.
		$$
		 For brevity put $q_b(z):=1+2bz+b(b-1)z^2. $ Thus $ P_{a,b,n}(z)=(1+z)^a+q_b(z)^n-1. $
		 The subtraction of $1$ is important, as it avoids counting the empty independent set twice, once from the simplex $\langle A\rangle$ and once from $\Gamma_b^{\star n}$.
		
		The following result gives the coefficient formula for $P_{a,b,n}(z).$
		\begin{theorem}
			Let $ P_{a,b,n}(z)=\sum_{k\geq 0} c_k z^k. $ Then $c_0=1$, and for $k\geq 1$,
			 $$
			c_k=\binom{a}{k}
			+
			\sum_{j=\max\{0,k-n\}}^{\lfloor k/2\rfloor}
			\frac{n!}{j!(k-2j)!(n-k+j)!}
			(2b)^{k-2j}\bigl(b(b-1)\bigr)^j,
			$$
			where $\binom{a}{k}=0$ for $k>a$, or equivalently,
			 $ c_k=\binom{a}{k}+[z^k]q_b(z)^n $, for $k\geq 1$.
		\end{theorem}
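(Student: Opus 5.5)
The plan is to extract coefficients directly from the closed form of Theorem~\ref{thm:indpol}, namely $P_{a,b,n}(z)=(1+z)^a+q_b(z)^n-1$. Coefficient extraction is linear, so for every $k$ we have $c_k=[z^k](1+z)^a+[z^k]q_b(z)^n-[z^k]1$.

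First, I will handle $k=0$. Here $[z^0](1+z)^a=1$, $[z^0]q_b(z)^n=q_b(0)^n=1$, and $[z^0]1=1$. Hence $c_0=1+1-1=1$.

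Second, for $k\geq 1$, the constant $-1$ contributes nothing. The binomial theorem gives $[z^k](1+z)^a=\binom{a}{k}$, which vanishes for $k>a$ under the paper's convention. This already yields the equivalent form $c_k=\binom{a}{k}+[z^k]q_b(z)^n$.

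Third, the main step is to compute $[z^k]q_b(z)^n$ by the multinomial theorem applied to the three terms $1$, $2bz$ and $b(b-1)z^2$. Write
$$
q_b(z)^n=\sum_{i+l+j=n}\frac{n!}{i!\,l!\,j!}\,(2b)^{l}\bigl(b(b-1)\bigr)^{j}z^{l+2j}.
$$
A term contributes to $z^k$ exactly when $l=k-2j$. Then $i=n-l-j=n-k+j$. Nonnegativity of $l$ forces $j\leq\lfloor k/2\rfloor$, and nonnegativity of $i$ forces $j\geq k-n$; together with $j\geq 0$ this gives precisely the stated summation range. Substituting $l$ and $i$ produces the displayed summand $\frac{n!}{j!(k-2j)!(n-k+j)!}(2b)^{k-2j}(b(b-1))^j$.

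As a consistency check, the multinomial coefficient factors as $\binom{n}{j}\binom{n-j}{k-2j}$, so the sum agrees with $s_k(nB_b)$ from Corollary~\ref{cor:sk}.

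The only delicate point is the bookkeeping of the summation limits. I need to confirm that the lower limit $\max\{0,k-n\}$ excludes exactly the terms with a negative factorial argument. Equivalently, the formula can be read with the convention that terms with $n-k+j<0$ vanish. This also makes the formula correctly return $0$ from the second sum when $k>2n$, since then $k-n>\lfloor k/2\rfloor$ and the range is empty.
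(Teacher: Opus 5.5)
Your proposal is correct and follows essentially the same route as the paper. Both arguments extract coefficients from $(1+z)^a+q_b(z)^n-1$ and count factors contributing $b(b-1)z^2$, $2bz$ and $1$, which gives the range $\max\{0,k-n\}\le j\le\lfloor k/2\rfloor$ and $c_0=1+1-1=1$. Your two extra checks are accurate: the factorization $\binom{n}{j}\binom{n-j}{k-2j}$ agrees with Corollary~\ref{cor:sk}, and the summation range is empty when $k>2n$.
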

		
		\begin{proof}
			We have $ q_b(z)^n=\bigl(1+2bz+b(b-1)z^2\bigr)^n. $ To obtain a term of degree $k$, choose $j$ of the $n$ factors to contribute $b(b-1)z^2$, choose $k-2j$ of the remaining factors to contribute $2bz$, and let the remaining $n-k+j$ factors contribute $1$. Therefore the contribution is
			 $$
			\frac{n!}{j!(k-2j)!(n-k+j)!}
			(2b)^{k-2j}\bigl(b(b-1)\bigr)^j.
			$$
			 The integer $j$ must satisfy $j\geq 0$, $k-2j\geq 0$, and $n-k+j\geq 0$. Hence
			 $
			\max\{0,k-n\}\leq j\leq \lfloor k/2\rfloor. $ The term $(1+z)^a$ contributes $\binom{a}{k}$ to the coefficient of $z^k$. Finally, the constant term is $ 1+1-1=1. $ This proves the formula.
		\end{proof}
		 
		 For the graph $MECS_{3,2}^{2}$ we have $a=2$, $b=3$, and $n=2$. Hence
		 $$
		P_{2,3,2}(z)
		=
		1+14z+49z^2+72z^3+36z^4.
		$$
		The above polynomial is unimodal and log-concave.
			
		 We observe that, although each block polynomial $q_b(z)^n$ is real-rooted, the full independence polynomial
		 $$
		P_{a,b,n}(z)=(1+z)^a+q_b(z)^n-1
		$$
		 need not be real-rooted, need not be log-concave, and need not be unimodal for all admissible choices of $a,b,n$.
		 Consider  the following polynomial $$
			P_{4,4,1}(z)
			=
			(1+z)^4+\bigl(1+8z+12z^2\bigr)-1
			=1+12z+18z^2+4z^3+z^4.
			$$
			  For log-concavity we would need $
			4^2\geq 18\cdot 1,$ but $
			16<18. $ Therefore,  the sequence of $P_{4,4,1}(z) $ is not log-concave. 
		For $a=8$, $b=6$, and $n=1$, we have 
			$$
			P_{8,6,1}(z)
			=
			(1+z)^8+\bigl(1+12z+30z^2\bigr)-1
			 =
			1+20z+58z^2+56z^3+70z^4+56z^5+28z^6+8z^7+z^8.
			$$
			 The coefficient sequence is  $1,\ 20,\ 58,\ 56,\ 70,\ 56,\ 28,\ 8,\ 1,$ and it  rises from $1$ to $58$, then drops to $56$, then rises again to $70$. Hence it is not unimodal. 
			 
			 \begin{theorem}\label{thm:unimodal-logconcave-Pabn}
			 	Let
			 	$ P_{a,b,n}(z):=(1+z)^a+\bigl(1+2bz+b(b-1)z^2\bigr)^n-1
			 	=\sum_{k=0}^{D} p_k z^k, $
			 	where $D=\max\{a,2n\}$ and, with the conventions $\binom{a}{k}=0$ for $k<0$ or $k>a$ and $q_k=0$ for $k<0$ or $k>2n$,
			 	$ p_0=1,  p_k=\binom{a}{k}+q_k$  for $k\ge 1, $  	with
			 	$$
			 	q_k=\sum_{j=0}^{\lfloor k/2\rfloor}
			 	\binom{n}{j}\binom{n-j}{k-2j}(2b)^{k-2j}\bigl(b(b-1)\bigr)^j.
			 	$$
			 	
			 	Then the following hold.
			 	
			 	\begin{enumerate}[label=\textup{(\roman*)},leftmargin=8mm]
			 		\item $P_{a,b,n}(z)$ is \emph{log-concave} if and only if
			 		$ p_k^2\ge p_{k-1}p_{k+1}$  for every $1\le k\le s, $ where
			 		$ s=\min\{D-1,\min{a,2n}+1\}. $
			 		Equivalently,
			 		$$
			 		\left(\binom{a}{k}+q_k\right)^2
			 		\ge
			 		\left(\binom{a}{k-1}+q_{k-1}\right)\left(\binom{a}{k+1}+q_{k+1}\right)
			 		\quad (1\le k\le s).
			 		$$
			 		\item $P_{a,b,n}(z)$ is \emph{unimodal} if and only if there exists an index $t\in\{0,1,\dots,D\}$ such that
			 		$ p_0\le p_1\le \cdots \le p_t\ge p_{t+1}\ge \cdots \ge p_D. $
			 		Equivalently, if $\nabla_k:=p_k-p_{k-1}$ for $1\le k\le D$, then $P_{a,b,n}(z)$ is unimodal if and only if the sequence $\{\nabla_k\}_{k=1}^{D}$ has at most one sign change, and that sign change is from $+$ to $-$.
			 	\end{enumerate}
			 	 In particular, whenever the inequalities in part $\textup{(i)}$ hold, the polynomial $P_{a,b,n}(z)$ is automatically unimodal.
			 \end{theorem}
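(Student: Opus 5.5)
The plan is to read part (i) as a reduction statement: the full family of log-concavity inequalities $p_k^2\ge p_{k-1}p_{k+1}$ for $1\le k\le D-1$ need only be checked up to $k=s$, because the remaining ones hold automatically. Part (ii) is essentially the definition of unimodality restated, and the closing sentence is the standard fact that a positive log-concave sequence is unimodal. Throughout, set $m:=\min\{a,2n\}$, so that $D=\max\{a,2n\}$.

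First I will record positivity. For $0\le k\le D$ we have $p_k>0$: if $k\le a$ then $\binom{a}{k}\ge 1$, and if $k\le 2n$ then $q_k\ge 1$ by Corollary~\ref{cor:sk}, since $b(b-1)>0$. So the sequence $(p_k)_{k=0}^{D}$ has no internal zeros. By definition, log-concavity means $p_k^2\ge p_{k-1}p_{k+1}$ for $1\le k\le D-1$, which already gives the ``only if'' direction of (i).

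For the ``if'' direction, take $k$ with $s<k\le D-1$. Then $k\ge m+2$, so all three indices $k-1,k,k+1$ exceed $m$. Consequently only the summand coming from the larger parameter survives in $p_{k-1},p_k,p_{k+1}$.
\begin{itemize}
\item If $a>2n$, then $p_j=\binom{a}{j}$ on this range, and binomial coefficients are log-concave via the ratio $\binom{a}{k}^2/\bigl(\binom{a}{k-1}\binom{a}{k+1}\bigr)=\frac{(k+1)(a-k+1)}{k(a-k)}\ge 1$.
\item If $2n>a$, then $p_j=q_j$ on this range. Here $q_b(z)^n$ has nonnegative coefficients and only real roots, since $q_b$ has discriminant $4b^2-4b(b-1)=4b>0$. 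Newton's inequalities then give $q_k^2\ge q_{k-1}q_{k+1}\bigl(1+\tfrac1k\bigr)\bigl(1+\tfrac{1}{2n-k}\bigr)\ge q_{k-1}q_{k+1}$.
\end{itemize}
The case $a=2n$ gives $s=D-1$, so nothing remains to check. This establishes (i), and the displayed equivalent form is just the substitution $p_k=\binom{a}{k}+q_k$.

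The step I expect to need the most care is the bookkeeping at the boundary. One must confirm that the index $k=m+1$, where $p_{k-1}=p_m$ still mixes both summands, is genuinely included in $s$. One must also confirm that the convention $p_0=1$ agrees with $\binom{a}{0}+q_0-1$, so that the case $m=0$ cannot occur (indeed $a\ge1$ and $n\ge1$). I will check this directly from the definition $s=\min\{D-1,m+1\}$.

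For (ii), unimodality is by definition the existence of such a peak index $t$. The reformulation in terms of $\nabla_k$ is immediate: nondecreasing up to $t$ and nonincreasing afterwards means $\nabla_k\ge0$ for $k\le t$ and $\nabla_k\le 0$ for $k>t$. Equivalently, ignoring zeros, the signs of the $\nabla_k$ change at most once, and only from $+$ to $-$.

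Finally, for the ``in particular'' statement, assume the inequalities of (i) hold, so the sequence is log-concave. Together with positivity, this says the ratios $p_{k+1}/p_k$ are nonincreasing in $k$. Hence the set of $k$ with $p_{k+1}/p_k\ge1$ is an initial segment $\{0,\dots,t-1\}$, and the sequence increases up to $p_t$ and decreases afterwards, so it is unimodal.
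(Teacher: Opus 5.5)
Your proposal is correct and follows the paper's proof essentially step for step. Necessity in (i) is immediate. For sufficiency you split on $a$ versus $2n$: beyond index $\min\{a,2n\}+1$ only one summand survives, and log-concavity there comes from binomial log-concavity or from Newton's inequalities for the real-rooted $q_b(z)^n$. Part (ii) is definitional, and positivity plus log-concavity gives unimodality. Your explicit positivity check, the Newton factor $(1+\tfrac1k)(1+\tfrac1{2n-k})$, and the ratio argument simply spell out details that the paper cites from Stanley.
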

			 
			 \begin{proof}
			 	In each factor of $ \bigl(1+2bz+b(b-1)z^2\bigr)^n $, we choose either $1$, or $2bz$, or $b(b-1)z^2$. If exactly $j$ quadratic terms and exactly $k-2j$ linear terms are selected, then the contribution to the coefficient of $z^k$ is
			 	$$
			 	\binom{n}{j}\binom{n-j}{k-2j}(2b)^{k-2j}\bigl(b(b-1)\bigr)^j.
			 	$$
			 	Summing over all admissible $j$ gives the displayed formula for $q_k$, we obtain
			 	$$
			 	P_{a,b,n}(z)=1+\sum_{k\ge 1}\left(\binom{a}{k}+q_k\right)z^k,
			 	$$
			 	which is exactly the coefficient description stated above.
			 	
			 	\smallskip
			 	We note that $1+2bz+b(b-1)z^2=(1+(b+\sqrt b)z)(1+(b-\sqrt b)z). $ 	Thus all zeros of $1+2bz+b(b-1)z^2$ are real and negative, and so all zeros of its $n$th power are again real and negative. By Newton's inequalities \cite{stanleyuniumodal}, the coefficient sequence $(q_k)_{k=0}^{2n}$ is log-concave, and hence unimodal. The binomial sequence $\Biggl(\binom{a}{k}\Biggr)_{k=0}^{a}$ is also log-concave and unimodal.
			 	
			 	\smallskip
			 	Now, we prove part $\textup{(i)}$. The necessity is immediate from the definition of log-concavity. For sufficiency, it is enough to show that outside the range $1\le k\le s$ the inequalities
			 	$ p_k^2\ge p_{k-1}p_{k+1} $ 	hold automatically. There are three cases. (1), If $a=2n$, then $D=a=2n$ and $s=D-1$. So the displayed inequalities are checked for all admissible $k$, and there is nothing more to prove. (2), If $a<2n$, then $D=2n$ and $s=a+1$. For every $k\ge a+2$, we have $\binom{a}{k-1}=\binom{a}{k}=\binom{a}{k+1}=0$, so
			 	$ p_{k-1}=q_{k-1}, p_k=q_k, $ and $p_{k+1}=q_{k+1}. $
			 	Since $(q_k)$ is log-concave, we obtain
			 	$ p_k^2=q_k^2\ge q_{k-1}q_{k+1}=p_{k-1}p_{k+1}$ and $k\ge a+2.$ Hence only the indices $1\le k\le a+1=s$ need to be checked.
			 	
			 	\medskip
			 	 \noindent (3) If $a>2n$, then $D=a$ and $s=2n+1$. For every $k\ge 2n+2$, we have $q_{k-1}=q_k=q_{k+1}=0$, so
			 	$ p_{k-1}=\binom{a}{k-1}, p_k=\binom{a}{k}, p_{k+1}=\binom{a}{k+1}. $
			 	Since the binomial sequence is log-concave,
			 	$$
			 	p_k^2=\binom{a}{k}^2\ge \binom{a}{k-1}\binom{a}{k+1}=p_{k-1}p_{k+1}
			 	\qquad (k\ge 2n+2).
			 	$$
			 	Hence only the indices $1\le k\le 2n+1=s$ need to be checked. Combining the above three cases, part $\textup{(i)}$ follows.
			 	
			 	\medskip
			 	 We turn to part $\textup{(ii)}$. This is simply the coefficient characterization of unimodality, a finite coefficient sequence $(p_0,p_1,\dots,p_D)$ is unimodal if and only if it increases weakly up to some index $t$ and then decreases weakly afterwards. This is equivalent to saying that the first-difference sequence
			 	$ \nabla_k=p_k-p_{k-1} $
			 	has at most one sign change, and if such a change occurs, it is from positive to negative. This gives the stated necessary and sufficient condition. Finally, the last sentence follows from the elementary fact that every nonnegative log-concave sequence without internal zeros is unimodal \cite{stanleyuniumodal}. Here all coefficients are positive, so there are no internal zeros.
			 \end{proof}
			 
			 \begin{remark}\label{rem:practical-check}
			 	For actual computations, Theorem~\ref{thm:unimodal-logconcave-Pabn} is useful, as the log-concavity test is finite and short. One does  not need to inspect all indices up to $D-1$, only the overlap window
			 	$ 1\le k\le \min\{D-1,\min\{a,2n\}+1\} $ matters. Beyond that point one of the two summands disappears, and the remaining tail is automatically log-concave.
			 \end{remark}
			 
			 For $(a,b,n)=(4,3,2)$, the polynomial
			 	$$
			 	P_{4,3,2}(z)=(1+z)^4+(1+6z+6z^2)^2-1
			 	=1+16z+54z^2+76z^3+37z^4.
			 	$$
			 	 is log-concave, and hence unimodal.
			 	 
			 For  $(a,b,n)=(6,3,2)$, the polynomial is
			 	$$
			 	P_{6,3,2}(z)=(1+z)^6+(1+6z+6z^2)^2-1
			 	=1+18z+63z^2+92z^3+51z^4+6z^5+z^6.
			 	$$
			 	The above polynomial is clearly unimodal, but not log-concave,  as 
			 	$$
			 	p_5^2=6^2=36\ngeq p_4p_6=51\cdot 1=51.
			 	$$
			 
			 \begin{example}
			 	For $(a,b,n)=(42,3,11)$, a direct expansion gives the coefficient sequence
			 	\begin{align*}
			 		&(1,108,2907,51080,648510,6260820,47365138,286323480,1401497865,5606278690,\\
			 		&18445035741,50130718944,112899673496,211292658960,330089247960,434783744608,\\
			 		&494029802754,507077978868,503905179930,513288104400,534410573436,542248642056,\\
			 		&\dots).
			 	\end{align*}
			 	Around the middle, we have
			 	$$
			 	507077978868>503905179930<513288104400<534410573436<542248642056,
			 	$$
			 	so after an initial drop the coefficients rise again. Hence the sequence is \emph{not} unimodal, and by Theorem~\ref{thm:unimodal-logconcave-Pabn}, it is therefore not log-concave either.
			 \end{example}
			 
			 The detailed analysis of above values is summarized in Table \ref{tab 1}.
			 \begin{table}[H]
			 	\centering
			 	\caption{Illustration of the three possible behaviors of $P_{a,b,n}(z)$.}
			 	\label{tab 1}
			 	\begin{tabular}{c c c c}
			 		\hline
			 		$(a,b,n)$ & coefficient sequence of $P_{a,b,n}(z)$ & unimodal? & log-concave? \\
			 		\hline
			 		$(4,3,2)$ & $(1,16,54,76,37)$ & Yes & Yes \\
			 		$(6,3,2)$ & $(1,18,63,92,51,6,1)$ & Yes & No \\
			 		$(42,3,11)$ & non-unimodal near the middle & No & No \\
			 		\hline
			 	\end{tabular}
			 \end{table}
			 
			 \begin{corollary}\label{cor:easy-sufficient}
			 	A convenient sufficient condition for unimodality is that, 	if
			 	$$
			 	\left(\binom{a}{k}+q_k\right)^2
			 	\ge
			 	\left(\binom{a}{k-1}+q_{k-1}\right)\left(\binom{a}{k+1}+q_{k+1}\right)
			 	\quad\text{for all } 1\le k\le s,
			 	$$
			 	where $s=\min\{D-1,\min\{a,2n\}+1\}$, then $P_{a,b,n}(z)$ is unimodal.
			 \end{corollary}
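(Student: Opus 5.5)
The corollary is a direct consequence of Theorem~\ref{thm:unimodal-logconcave-Pabn}. The plan is to first upgrade the finitely many hypothesised inequalities to full log-concavity of $(p_0,\dots,p_D)$, and then pass from log-concavity to unimodality using positivity of the coefficients.

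\textbf{Step 1: full log-concavity.} By part~(i) of Theorem~\ref{thm:unimodal-logconcave-Pabn}, the inequalities $p_k^2\ge p_{k-1}p_{k+1}$ for $1\le k\le s$ are equivalent to log-concavity of the whole sequence. The reason is the tail argument in its proof:
\begin{itemize}
\item if $a<2n$ and $k\ge a+2$, then $p_j=q_j$ for $j=k-1,k,k+1$;
\item if $a>2n$ and $k\ge 2n+2$, then $p_j=\binom{a}{j}$ for $j=k-1,k,k+1$;
\item if $a=2n$, then $s=D-1$ and there is no tail to handle.
\end{itemize}
In the first two cases the required inequality is inherited from the log-concavity of $(q_k)$, which follows from the real-rootedness of $q_b(z)^n$ and Newton's inequalities, or from the log-concavity of the binomial coefficients. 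So under the hypothesis, $p_k^2\ge p_{k-1}p_{k+1}$ holds for every $1\le k\le D-1$.

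\textbf{Step 2: no internal zeros.} Check that every $p_k>0$ for $0\le k\le D$. For $1\le k\le 2n$, $q_k\ge 1$, since for instance $j=\lfloor k/2\rfloor$ gives a positive term. For $2n<k\le a$, $\binom{a}{k}>0$. Since $D=\max\{a,2n\}$, every index is covered.

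\textbf{Step 3: log-concave implies unimodal.} Dividing $p_k^2\ge p_{k-1}p_{k+1}$ by $p_{k-1}p_k$ shows that the ratios $r_k:=p_k/p_{k-1}$ are weakly decreasing in $k$. Let $t$ be the last index with $r_t\ge 1$, or $t=0$ if no such index exists. Then $p_0\le\cdots\le p_t$ and $p_t\ge p_{t+1}\ge\cdots\ge p_D$. This is exactly the criterion of part~(ii) of Theorem~\ref{thm:unimodal-logconcave-Pabn}, equivalently the final sentence of that theorem.

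The only point needing care is Step~2, since log-concavity implies unimodality only when there are no internal zeros. Positivity settles it, and everything else is quoted from Theorem~\ref{thm:unimodal-logconcave-Pabn}.
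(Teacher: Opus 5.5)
Your proof is correct and follows the paper's route. The paper's one-line proof takes log-concavity from Theorem~\ref{thm:unimodal-logconcave-Pabn}(i) and then cites the fact that a log-concave sequence of positive coefficients is unimodal; your Steps~2 and~3 simply make explicit the positivity check and the ratio argument behind that cited fact. One harmless detail: at $k=1$ the displayed hypothesis uses $\binom{a}{0}+q_0=2$ rather than $p_0=1$, so it is literally slightly stronger than $p_1^2\ge p_0p_2$ and implies it. Your silent identification of the two is therefore fine in this direction.
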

			 
			 \begin{proof}
			 	This is immediate from Theorem~\ref{thm:unimodal-logconcave-Pabn}(i), as log-concavity implies unimodality for positive coefficient sequences.
			 \end{proof}
			 
			 \begin{remark}
			 	The above theorem shows that there is no universal parameter condition of the form $a\le f(b,n)$ or $a\ge g(b,n)$ that governs unimodality or log-concavity in all cases. The behavior is genuinely mixed: some triples $(a,b,n)$ give log-concavity, some give unimodality without log-concavity, and some fail even unimodality. The exact answer is therefore coefficient-theoretic, and Theorem~\ref{thm:unimodal-logconcave-Pabn} gives that exact criterion.
			 \end{remark}
		
		The following theorem gives the elementary localization of zeros.		
		\begin{theorem}\label{enstrom}
			Let $ P_{a,b,n}(z)=\sum_{k=0}^d c_k z^k, $ where $d=\max\{a,2n\}$. Since every coefficient $c_k$ is positive, $P_{a,b,n}(z)$ has no positive real zero. Moreover, every zero $\zeta$ of $P_{a,b,n}(z)$ satisfies the Eneström--Kakeya bound
			
			$$
			\min_{0\leq k\leq d-1}\frac{c_k}{c_{k+1}}
			\leq
			|\zeta|
			\leq
			\max_{0\leq k\leq d-1}\frac{c_k}{c_{k+1}}.
			$$
			
		\end{theorem}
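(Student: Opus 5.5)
The plan has two parts: positivity of the coefficients, and then the Eneström--Kakeya theorem in its general form for polynomials with positive coefficients.

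\emph{Positivity.} By the coefficient theorem at the start of this section, $c_0=1$ and $c_k=\binom{a}{k}+[z^k]q_b(z)^n$ for $k\geq 1$. The polynomial $q_b(z)^n$ has nonnegative coefficients, and its coefficient of $z^k$ is strictly positive for $0\leq k\leq 2n$. To see this, take $j=\max\{0,k-n\}$ in the multinomial sum; this index is admissible, and every factor in the summand is positive since $b\geq 3$. Also $\binom{a}{k}>0$ for $k\leq a$. Since $d=\max\{a,2n\}$, every $c_k$ with $0\leq k\leq d$ is positive. It follows that $P_{a,b,n}(x)\geq c_0=1>0$ for every real $x\geq 0$, so there are no positive (indeed no nonnegative) real zeros. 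This also makes every ratio $c_k/c_{k+1}$ well defined and positive.

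\emph{Upper bound.} Put $\beta:=\max_k c_k/c_{k+1}$, so that $c_k\leq \beta c_{k+1}$ for all $k$. Rescale by setting $Q(w):=P_{a,b,n}(\beta w)=\sum_k c_k\beta^k w^k$. The coefficients $e_k:=c_k\beta^k$ satisfy $e_k/e_{k+1}=c_k/(\beta c_{k+1})\leq 1$, so they are nondecreasing. Now apply the classical trick to $(1-w)Q(w)$:
\[
(1-w)Q(w)=e_0+\sum_{k=1}^{d}(e_k-e_{k-1})w^k-e_dw^{d+1}.
\]
For $|w|>1$ we get
\[
\Bigl|e_0+\sum_{k=1}^{d}(e_k-e_{k-1})w^k\Bigr|\leq |w|^d\Bigl(e_0+\sum_{k=1}^{d}(e_k-e_{k-1})\Bigr)=e_d|w|^d<e_d|w|^{d+1},
\]
so $(1-w)Q(w)\neq 0$ there. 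Hence every zero $\zeta$ of $P_{a,b,n}$ satisfies $|\zeta/\beta|\leq 1$, i.e. $|\zeta|\leq\beta$.

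\emph{Lower bound.} Apply the upper bound to the reciprocal polynomial $z^dP_{a,b,n}(1/z)$. Its coefficients are the $c_k$ reversed, and they are still positive. Its zeros are the $1/\zeta$, which is legitimate because $c_0\neq 0$ rules out $\zeta=0$. The maximal ratio for the reversed sequence is $\max_k c_{k+1}/c_k=1/\min_k(c_k/c_{k+1})$. The upper bound then gives $1/|\zeta|\leq 1/\min_k(c_k/c_{k+1})$, which is the lower bound.

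The only place needing care is the rescaling step together with the triangle-inequality estimate on $(1-w)Q(w)$. The positivity verification, in particular that there are no internal zero coefficients, is routine given the earlier coefficient formula. Alternatively one can simply cite the Eneström--Kakeya theorem once positivity is established.
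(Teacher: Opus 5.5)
Your proof is correct and follows the same route as the paper: positivity of the coefficients rules out positive real zeros, and the annular bound is the Enestr\"om--Kakeya theorem. The paper simply cites Enestr\"om--Kakeya (via Marden) and takes positivity of the $c_k$ for granted. You instead verify positivity from the coefficient formula (taking $j=\max\{0,k-n\}$) and prove the annulus directly via the rescaling, the $(1-w)Q(w)$ estimate, and the reciprocal polynomial; all of these steps check out.
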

		
		\begin{proof}
			Since all coefficients $c_k$ are positive, we have $
			P_{a,b,n}(x)>0, $ for every real $x>0$. Hence there is no positive real zero. The stated annular bound is the classical Eneström--Kakeya theorem \cite{marden} applied to the polynomial $
			P_{a,b,n}(z)=c_0+c_1z+\cdots+c_dz^d $ with $c_k>0$ for every $k$.
		\end{proof}
		
		For $P_{2,3,2}(z)=1+14z+49z^2+72z^3+36z^4$,   every zero $\zeta$ satisfies
		$ \frac{1}{14}\leq |\zeta|\leq 2. $ Numerically, the moduli of the zeros are approximately
		 $ 1.0000, 0.1035, 0.5180, 0.5180, $ which agrees with the annular bound. Table \ref{tab:ind-polynomial-numerics} gives the numerical data for certain values of the polynomial $P_{a,b,n}(z)$.
		\begin{table}[H]
			\centering
			\footnotesize 
			\setlength{\tabcolsep}{3pt} 
			\caption{Numerical data for selected independence polynomials $P_{a,b,n}(z)$.}
			\label{tab:ind-polynomial-numerics}
			\begin{tabularx}{\textwidth}{@{} c c c >{\centering\arraybackslash}X c c >{\centering\arraybackslash}X @{}}
				\toprule
				$a$ & $b$ & $n$ & $P_{a,b,n}(z)$ & unimodal & log-concave & zeros, rounded \\
				\midrule
				$2$ & $3$ & $2$ & $1+14z+49z^2+72z^3+36z^4$ & yes & yes & $-1, -0.1035, -0.4482\pm0.2596i$ \\ \addlinespace
				$1$ & $3$ & $1$ & $1+7z+6z^2$ & yes & yes & $-1, -0.1667$ \\ \addlinespace
				$2$ & $3$ & $1$ & $1+8z+7z^2$ & yes & yes & $-1, -0.1429$ \\ \addlinespace
				$3$ & $3$ & $1$ & $1+9z+9z^2+z^3$ & yes & yes & $-7.8730, -1, -0.1270$ \\ \addlinespace
				$4$ & $4$ & $1$ & $1+12z+18z^2+4z^3+z^4$ & yes & no & $-0.0972, -0.6651, -1.6189\pm3.5842i$ \\ \addlinespace
				$8$ & $6$ & $1$ & $1+20z+58z^2+56z^3+70z^4+56z^5+28z^6+8z^7+z^8$ & no & no & $-0.0598, -0.3986, 0.2031\pm0.9977i, -1.2349\pm1.8277i, -2.7390\pm0.9019i$ \\ \addlinespace
				$2$ & $4$ & $2$ & $1+18z+89z^2+192z^3+144z^4$ & yes & yes & $-0.6591, -0.0854, -0.2944\pm0.1916i$ \\ \addlinespace
				$5$ & $3$ & $3$ & $1+23z+136z^2+442z^3+761z^4+649z^5+216z^6$ & yes & yes & $-1, -0.0622, -0.7690\pm0.2687i, -0.2022\pm0.2670i$ \\
				\bottomrule
			\end{tabularx}
		\end{table}

		\begin{figure}[H]
			\centering
			\begin{tikzpicture}
				\begin{axis}[
					ybar,
					width=13cm,
					height=7.5cm,
					xlabel={$k$},
					ylabel={$c_k$},
					symbolic x coords={0,1,2,3,4,5,6,7,8},
					xtick=data,
					ymin=0,
					bar width=12pt,
					nodes near coords,
					nodes near coords align={vertical},
					title={Unimodality fails for $P_{8,6,1}(z)$}
					]
					\addplot coordinates {
						(0,1)
						(1,20)
						(2,58)
						(3,56)
						(4,70)
						(5,56)
						(6,28)
						(7,8)
						(8,1)
					};
				\end{axis}
			\end{tikzpicture}
			\caption{The sequence rises, falls, and rises again: $58>56<70$.}
			\label{fig:unimodality-fails}
		\end{figure}
		Figure \ref{fig:unimodality-fails} shows the unimodal failure of $P_{8,6,1}(z)$, its polynomial and zeros are shown in Table \ref{tab:ind-polynomial-numerics}.	 
		
		 Since all coefficients are positive, so by Enestr\"om--Kakeya theorem version Theorem \ref{enstrom}, we have 
			$ \min_{0\le k\le 7}\tfrac{a_k}{a_{k+1}}=\tfrac1{20},$ and $
			\max_{0\le k\le 7}\tfrac{a_k}{a_{k+1}}=8. $
			Thus, by Theorem \ref{enstrom}, every zero $\zeta$ of $P_{8,6,1}(z)$ lies in the annulus
			$ \tfrac1{20}\le |\zeta|\le 8. $ Figure \ref{fig:roots-annulus} shows its zero in plane, where the shaded region indicates the annulus $2\le |z|\le 8$.
		\begin{figure}[H]
			\centering
			\begin{tikzpicture}[scale=0.48, >=Latex]
				
				\def\Rout{8}
				\def\Rmid{2}
				\def\Rin{1}
				
				\fill[gray!12] (-9.4,-9.4) rectangle (10.4,9.5);
				\draw[gray!55] (-9.4,-9.4) rectangle (10.4,9.5);
				
				\draw[step=1.0, very thin, gray!20] (-9.4,-9.4) grid (10.4,9.5);
				
				\fill[red!8, even odd rule] (0,0) circle (\Rout) (0,0) circle (\Rmid);
				
				\draw[black, line width=0.9pt] (-9.4,0) -- (9.4,0);
				\draw[black, line width=0.9pt] (0,-9.4) -- (0,9.4);
				
				\draw[red, dashed, line width=1.2pt] (0,0) circle (\Rout);
				\draw[blue!70, dashed, line width=1.2pt] (0,0) circle (\Rmid);
				\draw[gray!55, line width=0.8pt] (0,0) circle (\Rin);
				
				\node[gray!70] at (-5,-0.45) {$-5$};
				\node[gray!70] at (5,-0.45) {$5$};
				\node[gray!70] at (-0.42,5) {$5$};
				\node[gray!70] at (-0.55,-5) {$-5$};
				
				
				\node[red, font=\bfseries] at (9.1,1.6) {\tiny $|z|=8$};
				\node[blue!80, font=\bfseries] at (3.2,0.55) {\tiny$|z|=2$};
				\node[gray!70, font=\bfseries] at (1.3,-1.45) {\tiny $|z|=1$};
				\node[red, font=\bfseries] at (-0.05,0.45) {\tiny $|z|=\tfrac{1}{20}$};
				
				\fill[red] (-2.80,  0.90) circle (0.10);
				\fill[red] (-2.80, -0.90) circle (0.10);
				\fill[red] (-1.25,  1.85) circle (0.10);
				\fill[red] (-1.25, -1.80) circle (0.10);
				\fill[red] ( 0.18,  1.00) circle (0.10);
				\fill[red] ( 0.18, -1.00) circle (0.10);
				\fill[red] (-0.42,  0.00) circle (0.10);
				\fill[red] (-0.06,  0.00) circle (0.10);
				
			\end{tikzpicture}
			\caption{A schematic plot of the zeros of the independence polynomial in the complex plane, together with the circles $|z|=1$, $|z|=2$, and $|z|=8$.}
			\label{fig:roots-annulus}
		\end{figure}
	\section{Conclusion and future work}\label{sec:conclusion}
	 We studied the family
	$ MECS_{b,n}^a\cong \overline{K}_a\join \big(n(K_b+K_2)\big), $
	which extends the previously known one-block extended complete split-like graphs and, at the same time, provides a repeated-block counterpart to the clique-based family $\overline{K}_a\join nK_b$ \cite{AnandGuptaRatherSingh2026}. The central observation is that the block $B_b=K_b+K_2$ already has a two-strand Betti table, and this feature survives under repeated disjoint union through tensor products of minimal free resolutions. That mechanism leads naturally to the Betti polynomial $P_b(u,v)^n$ for $nB_b$, and after the join transformation one obtains explicit formulas for the linear and quadratic strands of $MECS_{b,n}^a$.
	
	We described the independence complex as the disjoint union of a simplex and an $n$-fold simplicial join of crown complexes, from which the independence polynomial and the Hilbert series follow immediately. This decomposition also made it possible to identify the top Betti number directly and therefore to compute the projective dimension and depth without appealing to ad hoc disconnectedness arguments. In addition, we determined the induced matching number, classified the well-covered and unmixed members by the simple threshold $a=2n$, and proved that the family is never Cohen--Macaulay.
	
	A useful conceptual outcome of the paper is the comparison with previously known split-like families. Replacing a clique block by the matched double-clique block $B_b$ changes two basic growth rates at once, the independence contribution of one block increases from $1$ to $2$, and for $b\geq 3$ the regularity contribution increases from $1$ to $2$. This shift is visible both combinatorially and homologically and explains why the threshold for well-coveredness changes from $a=n$ in the clique-block setting to $a=2n$ here. Also, detailed analytical analysis of independence polynomial is investigated.
	
	The present article still has limitations. Although the repeated-block Betti polynomial gives a complete implicit description of all graded Betti numbers, only the linear and quadratic strands were written out explicitly. Likewise, the sequential Cohen--Macaulay behavior of the extremal case $a=1$ was not resolved in full generality. These two issues suggest several directions for future work:
	\begin{enumerate}[label=\textup{(\roman*)},leftmargin=8mm]
		\item obtain closed formulas for all higher Betti strands of $MECS_{b,n}^a$;
		\item determine the precise sequential Cohen--Macaulay criterion for $nB_b$ and hence for $MECS_{b,n}^1$;
		\item investigate symbolic powers, Rees algebras, and Castelnuovo functions for this family;
		\item extend the construction to hypergraph analogues where each block is replaced by a higher-dimensional split-like module.
	\end{enumerate}
	These problems appear to be both accessible and structurally meaningful, and they should further clarify how repeated graph modules control the homological shape of edge ideals.
	
	\section*{Declarations}
	\noindent \textbf{Data Availability:} There is no data associated with this article.
	
	\noindent \textbf{Funding:} The authors did not receive support from any organization for the submitted work.
	
	\noindent \textbf{Conflict of interest:} The authors have no competing interests to declare that are relevant to the content of this article.


\begin{thebibliography}{99}
		
		\bibitem{AnandGuptaRatherSingh2026}
		S. Anand, N. Gupta, S. A. Rather and P. Singh, Homological invariants of some complete split-like graphs, \emph{Beitr. Algebra Geom.} (2025), \url{https://doi.org/10.1007/s13366-025-00819-5}.
		
		\bibitem{AnandRoy2021}
		S. Anand and A. Roy, Graded Betti numbers of some families of circulant graphs, \emph{Rocky Mountain J. Math.} \textbf{51}(6) (2021) 1919--1940.
		
		\bibitem{AnandSinghVats2025}
		S. Anand, P. Singh and R. Vats, Graded Betti numbers of some split hypergraphs, \emph{Rocky Mountain J. Math.} \textbf{55}(4) (2025) 899--910.
		
		\bibitem{BrunsHerzog1998}
		W. Bruns and J. Herzog, \emph{Cohen--Macaulay Rings}, 2nd ed., Cambridge Stud. Adv. Math. 39, Cambridge Univ. Press, Cambridge, 1998.
		
		\bibitem{DaoSchweig2013}
		H. Dao and J. Schweig, Projective dimension, graph domination parameters, and independence complex homology, \emph{J. Combin. Theory Ser. A} \textbf{120}(2) (2013) 453--469.
		
		\bibitem{FranciscoHaVanTuyl2009}
		C. A. Francisco, H. T. Hà and A. Van Tuyl, Splittings of monomial ideals, \emph{Proc. Amer. Math. Soc.} \textbf{137}(10) (2009) 3271--3282.
		
		\bibitem{Froberg1990}
		R. Fröberg, On Stanley--Reisner rings, in \emph{Topics in Algebra, Part 2}, Banach Center Publ. 26, PWN, Warsaw, 1990, pp. 57--70.
		
		\bibitem{GuptaSinghAnand2026}
		N. Gupta, P. Singh and S. Anand, Certain homological invariants of some bipartite graphs, \emph{J. Algebra Appl.} \textbf{25}(10) (2026), Art. No. 2650111.
		
		\bibitem{HaVanTuyl2008}
		H. T. Hà and A. Van Tuyl, Monomial ideals, edge ideals of hypergraphs, and their graded Betti numbers, \emph{J. Algebraic Combin.} \textbf{27}(2) (2008) 215--245.
		
		\bibitem{HaWoodroofe2014}
		H. T. Hà and R. Woodroofe, Results on the regularity of square-free monomial ideals, \emph{Adv. Appl. Math.} \textbf{58} (2014) 21--36.
		
		\bibitem{HerzogHibi2011}
		J. Herzog and T. Hibi, \emph{Monomial Ideals}, Grad. Texts in Math. 260, Springer, London, 2011.
		
		\bibitem{HerzogHibiZheng2004}
		J. Herzog, T. Hibi and X. Zheng, Dirac's theorem on chordal graphs and Alexander duality, \emph{European J. Combin.} \textbf{25}(7) (2004) 949--960.
		
		\bibitem{Hochster1975}
		M. Hochster, Cohen--Macaulay rings, combinatorics, and simplicial complexes, in \emph{Ring Theory II}, Lecture Notes in Pure and Appl. Math. 26, Dekker, New York, 1977, pp. 171--223.
		
		\bibitem{Jacques2004}
		S. Jacques, \emph{Betti Numbers of Graph Ideals}, Ph.D. thesis, University of Sheffield, 2004.
		
		\bibitem{Jonsson2008}
		J. Jonsson, \emph{Simplicial Complexes of Graphs}, Lecture Notes in Math. 1928, Springer, Berlin, 2008.
		
		\bibitem{Katzman2006}
		M. Katzman, Characteristic-independence of Betti numbers of graph ideals, \emph{J. Combin. Theory Ser. A} \textbf{113}(3) (2006) 435--454.
		
		
		\bibitem{marden} . Marden, \textit{Geometry of Polynomials}, American Mathematical Society, Mathematical Surveys and Monographs, No. 3, (1966).
		\bibitem{MillerSturmfels2005}
		E. Miller and B. Sturmfels, \emph{Combinatorial Commutative Algebra}, Grad. Texts in Math. 227, Springer, New York, 2005.
		
		\bibitem{MoreyVillarreal2012}
		S. Morey and R. H. Villarreal, Edge ideals: algebraic and combinatorial properties, in \emph{Progress in Commutative Algebra 1}, de Gruyter, Berlin, 2012, pp. 85--126.
		
		\bibitem{Mousivand2012}
		A. Mousivand, Algebraic properties of product of graphs,
		\emph{Comm. Algebra} \textbf{40} (2012) 4177--4194. 
		
		\bibitem{bilafilo} B. A. Rather, Betti numbers of edge ideals of some graphs with application to graphs assigned to groups, \textit{Filomat},  \textbf{38}(6) (2024) 2185--2204.
		\bibitem{bilaljco} B. A. Rather, Homological invariants of edge ideals of Wollastonite graphs, \textit{J. Combin. Optimiz.} \textbf{51} (2026), Art. N. 36.
		\bibitem{bilalfilo1} B. A. Rather, and J. Wang, Homological invariants of edge ideals of power graphs of  finite groups, \textit{Filomat} \textbf{40}(2) (2026) 739--750.
		
		\bibitem{Stanley1996}
		R. P. Stanley, \emph{Combinatorics and Commutative Algebra}, 2nd ed., Progr. Math. 41, Birkhäuser, Boston, MA, 1996.
		
		\bibitem{stanleyuniumodal} R. P. Stanley, Log-concave and unimodal sequences in algebra, combinatorics, and geometry, \textit{Ann. New York Acad. Sci} \textbf{576}(1) (1989) 500--535.
		
		\bibitem{Villarreal2015}
		R. H. Villarreal, \emph{Monomial Algebras}, 2nd ed., Monogr. Res. Notes Math., CRC Press, Boca Raton, FL, 2015.
		
		\bibitem{Woodroofe2009}
		R. Woodroofe, Vertex decomposable graphs and obstructions to shellability, \emph{Proc. Amer. Math. Soc.} \textbf{137}(10) (2009) 3235--3246.
		
	\end{thebibliography}
\end{document}